\renewcommand{\overrightarrow}{\mathbf}
\newcommand{\matr}[1]{\mathbf{#1}}
\newcommand \mathstr[1]{{}}
\newcommand \rdout[1]{{}}
\DeclarePairedDelimiterX{\inp}[2]{\langle}{\rangle}{#1, #2}
\DeclareMathAlphabet{\mathcalligra}{T1}{calligra}{m}{n}
\newcommand \hide[1] {}
\newcommand\red[1]{{
#1}}
\newcommand\blue[1]{{
#1}}
\newcommand\green[1]{{
#1}}
\def \ham {\mathcal{H}}
\def \bLam {{\boldsymbol{\lambda}}}
\def \i {\item}
\def \bi {\begin{itemize}\item}
\def \ei {\end{itemize}}
\newtheorem{Assumption}{Assumption}
\newtheorem{Definition}{Definition}
\newtheorem{Proposition}{Proposition}
\newtheorem{Theorem}{Theorem}
\newtheorem{Lemma}{Lemma}
\newtheorem{Remark}{Remark}
\begin{document}

\title{Spread, Then Target, and Advertise in Waves:\\ Optimal Budget Allocation Across Advertising Channels}

\author{Soheil~Eshghi,~\IEEEmembership{Member,~IEEE,} Victor~M.~Preciado,~\IEEEmembership{Member,~IEEE,} Saswati~Sarkar,\\ Santosh~S.~Venkatesh,~\IEEEmembership{Member,~IEEE,} Qing~Zhao,~\IEEEmembership{Fellow,~IEEE,} Raissa~D'Souza,\\ and Ananthram~Swami,~\IEEEmembership{Fellow,~IEEE}
\IEEEcompsocitemizethanks{\IEEEcompsocthanksitem S. Eshghi is with the School of Electrical Engineering, and Yale Institute of Network Science (YINS), Yale University, New Haven, CT 06520. This work was completed in his time at Cornell University.\protect\\
E-mail: {\tt\small soheil.eshghi@yale.edu}
\IEEEcompsocthanksitem V.M. Preciado, S. Sarkar, and S.S. Venkatesh are with the School of Electrical and Systems Engineering, University of Pennsylvania, Philadelphia, PA 19104.\protect\\
E-mail: {\tt\small preciado, swati, venkates@seas.upenn.edu}
\IEEEcompsocthanksitem Q. Zhao is with the School of Electrical and Computer Engineering, Cornell University, Ithaca, NY 15853.\protect\\
E-mail: {\tt\small qz16@cornell.edu}
\IEEEcompsocthanksitem R. D'Souza is with the Departments of Computer Science \& Mechanical and Aerospace Engineering, University of California, Davis, Davis, CA 95616.
E-mail: {\tt\small raissa@cse.ucdavis.edu}
\IEEEcompsocthanksitem A. Swami is with the Army Research Laboratory, Adelphi, MD 20783\protect\\
E-mail: {\tt\small ananthram.swami.civ@mail.mil}. %

}
\thanks{\red{This paper was presented in part in the 2017 IEEE Information Theory and Applications Workshop (ITA), San Diego, CA and in part as a poster at the 2017 Yale Day of Data, New Haven, CT.}
}}

\maketitle
\begin{abstract}
We analyze optimal strategies for the allocation of a finite budget that can be invested in different advertising channels over time with the objective of influencing social opinions in a network of individuals. In our analysis, we consider both exogenous influence mechanisms, such as advertising campaigns, as well as endogenous mechanisms of social influence, such as word-of-mouth and peer-pressure, which are modeled using diffusion dynamics.
 We show that for \rdout{a general set}\red{a broad family} of objective functions, the optimal influence strategy at every time uses all channels at either their maximum rate or not at all, i.e., a bang-bang strategy. Furthermore, we prove that the number of switches between these extremes is bounded above by a term that is typically much smaller than the number of agents. This means that the optimal influence strategy is to exert maximum effort in waves for every channel, and then cease effort and let the effects propagate. We also show that, at the beginning of the campaign, the total cost-adjusted reach of an exogenous advertising channel determines its relative value. In contrast, as we approach our investment horizon (e.g., election day), the optimal strategy is to invest in channels able to target individuals instead of broad-reaching channels.
We demonstrate that the optimal influence strategies are easily computable in several practical cases, and explicitly characterize the optimal controls for the case of linear objective functions in closed form. Finally, we see that, in the canonical example of designing an election campaign, identifying late-deciders is a critical component in the optimal design.
\end{abstract}

\section{Introduction}\label{sec:introduction}

\IEEEPARstart{O}{pinions} are important definers of real-world outcomes: they affect who is elected for political office \cite{gerber2009does}, which policies are successful \cite{brodie2011regional}, and which products are bought by customers \cite{chevalier2006effect}. The proliferation of online media has complicated \cite{campbell2014advertisements}, sped up \cite{pfeffer2014understanding}, and enhanced \cite{valenzuela2013unpacking} opinion formation processes. The opinion formation process can be affected by interested parties through \emph{advertising channels}, {which are media by which messages are distributed} to a target audience. Political campaigns and marketing departments apportion their advertising budgets between such channels (e.g., TV ads, website banner ads, billboards) in order to maximize some ultimate goal (e.g., votes, sales) \cite{chung2014air}{, though the extent of the effect of these efforts is a matter of debate \cite{huber2007identifying, gerber2009does}}. The importance of this decision has increased in conjunction with the increasing resources devoted to these efforts: {In 2017, over \$1 trillion was spent on marketing globally \cite{groupm2017}, while \$9.8 billion was spent on advertising in the 2016 US elections alone \cite{kaye2017data}.} Thus, studying the related multi-channel resource allocation problem is both timely and significant.

\begin{figure}[htb]
\begin{center}
\includegraphics[scale=0.6]{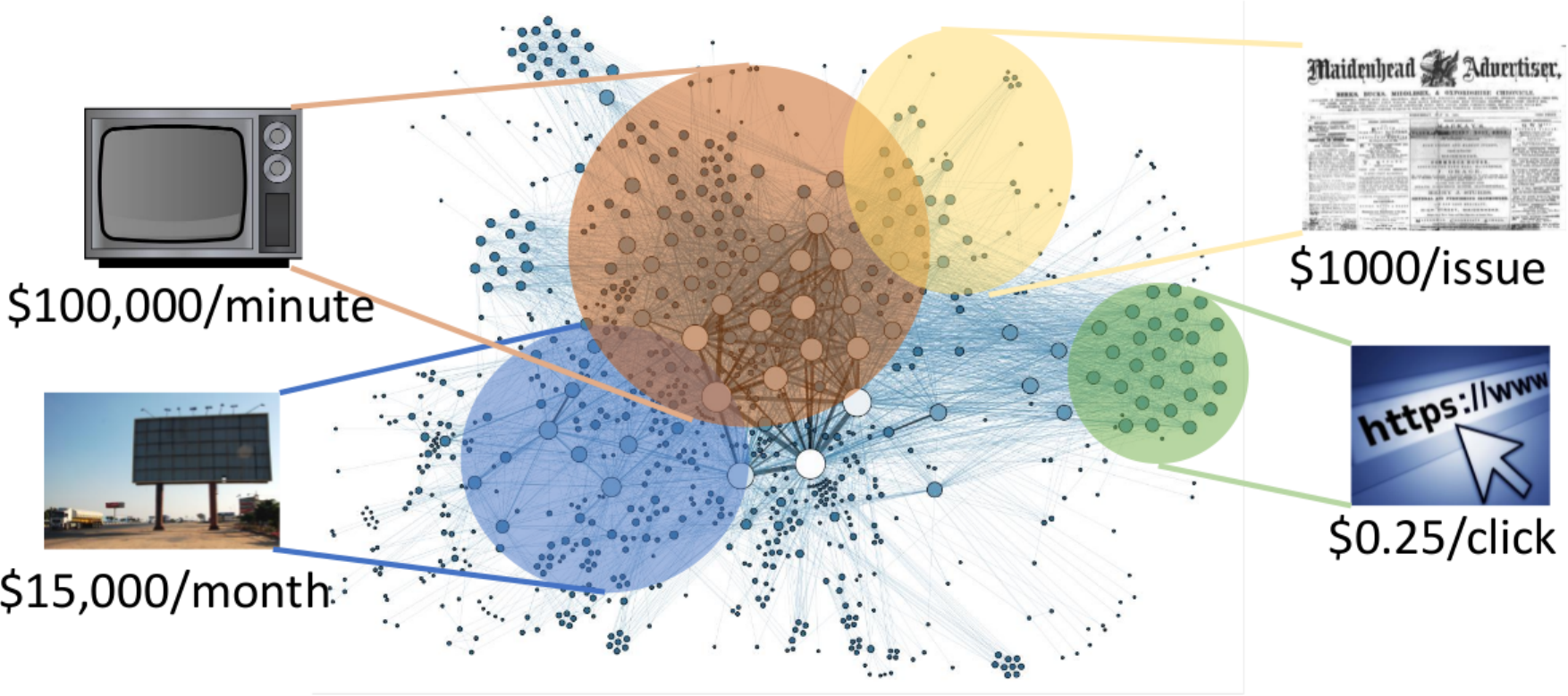}
\end{center}
\vspace{-0.1in}
\caption{\footnotesize{Different advertising channels have differing, possibly overlapping, reaches and come with differing costs. Furthermore, the effect of a channel may differ across individuals. These external influences are modulated by internal conversations within the network whereby agents integrate this information with that of their neighbors. The decision of the advertiser is to apportion resources between these channels factoring in these complexities so as to get an optimal return on investment, which in the election example is votes cast in favor of the campaign.}}\label{fig:reach}
\vspace{-0.1in}
\end{figure}

In particular, the mechanisms of opinion influence can be classified into two types based on its direct provenance. First, {there are} \emph{endogenous} influence mechanisms (e.g., word-of-mouth), in which individuals process the expressed opinions of other individuals they meet, and consider their credibility and the level of acquaintance and trust in synthesizing a new opinion based on the information.\footnote{These weights can, in general, be dynamic, even depending on the expressed opinion \cite{hegselmann2002opinion}. In this work, we consider static weights.} This leads to the notion of an endogenous influence weighted graph capturing endogenous influence between individuals. On the other hand, there are  \emph{exogenous} influence mechanisms, in which an external influencer seeks to shape the opinions of an individual. This mechanism is facilitated by various advertising channels\footnote{Throughout this work, we use the word channel to represent both the medium (e.g., TV advertising) and the reach of the medium (e.g., people who watch TV) -- the distinction is clear in context.}. In our opinion formation model, each channel has a reach structure, i.e. the individuals that can be reached by that channel, that is not necessarily related to the endogenous neighborhood. The actions of other external influencers also affect each individual's opinion formation process, which can in general be a random (noisy) process. The external influencer seeks to maximize a function of the \emph{global state} (the vector of individual opinions) at a specific time (e.g., election day) by allocating their budget  across several advertising channels in a given time interval (see Fig. \ref{fig:reach}). In this paper, we study the nature of the optimal budget allocation and provide structures and algorithms for their computation.

Finding the optimal budget allocation is complicated by several factors: (i) The reach of each channel is limited, and there are significant overlaps between the target audiences of various channels \cite{baines2011measuring}; (ii) different channels have differing costs, and attempts to influence opinions by external sources can affect individuals in different, and sometimes opposite, ways \cite{kahan2012polarizing}; (iii) the budget allocation decision is dynamic (depends on time) and changes with the state of the network\hide{, and therefore constitutes an optimization in the space of functions}. Furthermore, the influencer faces several trade-offs: utilizing an advertising channel early allows the influenced individuals to spread the effect to their neighbors (\emph{diffusion}), while lessening the impact on the influenced individuals as they moderate the effects of the external influence with the opinions of their neighbors (\emph{dilution}). There is also a trade-off between utilizing cheap channels versus utilizing expensive but effective ones. These competing forces make the \emph{a priori} determination of the optimal budget allocation hard to determine. 

There are also significant technical challenges to solving this problem, since characterizing the optimal budget allocation across channels and throughout the time interval requires characterizing the structure of an optimal constrained vector of controls over a graph. \hide{Currently, there does not exist a general theory to characterize the structure of vector optimal controls for general graph structure.} Furthermore, the work also  requires computing the optimal control of the well-studied linear consensus dynamics \cite{olfati2007consensus, spanos2005dynamic} in a novel setting, as the classical literature is concerned with reaching agreement among agents, while our objectives may incentivize agreement in some circumstances and disagreement in others. As we show in this paper, finding the optimal allocation in our problem requires a new synthesis of spectral graph theory and optimal control theory.  

\emph{Contributions:} In this work, we model the advertising influence problem as a constrained consensus control process in an arbitrary network with overlapping influence channels and endogenous influence of agents on each other \cite{naik2003understanding}. Using Pontryagin's maximum principle, spectral graph theory, and custom analytical arguments, we determine the structure of the optimal budget allocation to the various influence channels along a given time horizon. 

We show that for \rdout{a general class}\red{a broad family} of objectives, the optimal control for each channel is bang-bang (only takes its extreme values), with the number of switches being upper-bounded by a term\hide{\footnote{The number of non-zero elements in the orthogonal projection of the channel influence vector on the eigenvectors of the consensus dynamics.}}  which is \rdout{itself upper-bounded by}\red{smaller than} the number of individuals. Therefore, the search for optimal controls can be conducted on the space of vectors of a fixed size whose entries represent times of switching between extreme values rather than on the space of functions. Furthermore, for the case of a linear objective (i.e., when individuals make a decision in proportion to their opinion value), we explicitly calculate the optimal budget allocation over time, providing an open-loop algorithm that can compute the vector of optimal controls in a logarithmic number of steps. This allocation also implicitly determines the relative importance of a particular channel to the global objective, and thus defines {an explicitly computable metric for the influence of a channel} at any given time. This metric allows the influencer to compare and contrast the effects of different channels, as well as the effect of a channel at different times.
Finally, our results show that investing in an influence channel reaching likely voters is important as we get \emph{close to decision/election time}, while the \emph{cost-effectiveness} of a channel (defined as its total reach divided by its cost) is more important at earlier times.  
 
 For the case where the objective is a sum of sigmoids, which is a relaxed version of voting between two alternatives, we show that the optimal control can be approximated just by knowing the agents who change their minds at the terminal time in the optimal allocation (\emph{late-deciders} \cite{hayes1996marketing}). 
 
 In sum, our work represents a new confluence of the literature on consensus dynamics and optimal control theory, while providing significant novel structures, computational algorithms, metrics, and insights to the optimal budget allocation for the multi-channel advertising problem.

\section{Literature Review}

As this work draws upon the literature in multiple areas, we will discuss antecedents in each area in turn:

{\emph{Consensus and opinion dynamics}:
Linear consensus-seeking dynamics are some of the oldest models used to model the spread of opinions and social influence, first proposed by French \cite{french1956formal} and expounded upon by DeGroot \cite{degroot1974reaching}. In these models, opinions (states) are taken to be continuous real variables, and each node uses a (weighted) average opinion of its neighbors' opinions in each time-step to update its opinion. 
Abelson \cite{abelson1964mathematical} provided a continuous-time variant of these dynamics, which is the model we base our work upon. This work was generalized first by Taylor \cite{taylor1968towards} to also incorporate individual-specific prejudices, leading to the desirable persistence of opinion cleavage within such simple models (leading to a dynamics very similar to ours). Other closely related continuous-time variants of these dynamics have been rigorously studied by control theorists \cite{olfati2004consensus, olfati2007consensus, spanos2005dynamic, ren2005survey}. Most of these results focus on asymptotic properties of these dynamics and their convergence, and not on their finite-time behavior and the effect of influence on such behavior. While more complex models of opinion dynamics have been proposed and studied in detail \cite{hegselmann2002opinion, friedkin1990social, marsden1993network}, the linear consensus dynamics remains a baseline for comparison. Recent detailed overviews of the developments in the field of opinion dynamics make the above distinctions and limitations clearer \cite{friedkin2015problem, proskurnikov2017tutorial}.}
Finally, the linear approximation of the effect of external influence on opinion dynamics also follows a long-standing tradition \cite{taylor1968towards, friedkin1990social}.
{Our work covers finite-time budget-constrained opinion change with a specific goal, while the focus of these papers is understanding asymptotic properties of these systems (without strategic interventions and goals).}

{\emph{Control of Opinion Propagation}: The case of influencing opinion dynamics is a research question of current interest. The problem of Influence Maximization (IM) consists of finding the set of individuals that must be initially influenced in order to maximize the final effect of endogenous spreading mechanisms \cite{kempe2005influential}. Variants of this problem, under multiple models of opinion propagation, have been the subject of much study (e.g., \cite{mossel2007submodularity, anshelevich2009approximation,
borgs2014maximizing, morone2015influence}). Among this line of work, budgeted influence maximization with partial incentives \cite{demaine2014influence} is the closest to our setting, as it relaxes the artificial binary assumptions on the success of influence efforts.
While this literature is closely related to work on epidemic control \cite{morone2015influence}, its more immediate analog is work on control of social learning. For example, Yildiz {\em et al.} \cite{yildiz2011discrete} consider the case of stubborn agents who refuse to change their opinions in a two-opinion voter model. They show that the mean average opinion is only a function of the structure of the network and the placement of the stubborn nodes. They then investigate the optimal placement of these stubborn nodes.  However, the focus of all of these papers has been on static optimization, i.e., actions that are taken at a specific point in time. On the other hand, social networks are naturally dynamic, i.e., their states are time-varying, and it is natural to assume that actions prescribed to affect them can also be dynamic. In this paper, we analyze {such} optimal actions (henceforth referred to as controls) using tools from optimal control theory.}

\emph{Linear Optimal Control}:
\blue{In linear optimal control problems, a controller seeks to optimize the \rdout{accumulation}\red{time integration} of a linear objective \rdout{over time in a}\red{depending on the states and inputs of a} linear dynamical system with linear bounded controls.}

\blue{In the case where actions are not costly and the time horizon is not fixed,} 
the optimal control signal \rdout{presents}
\blue{has} a bang-bang structure with a finite number of switches \rdout{in the case where actions are not costly and the time horizon is not fixed} 
\cite{athans1966optimal, liberzon2012calculus, seierstad1986optimal}.
 However, these results do not apply directly in the case with costly actions and where the goal is not to drive the system to a known state in minimum time. In contrast, our work takes a step beyond those results and provides a context-specific method for evaluating the relative influence value on a channel within a time horizon. 

\emph{Optimal Control of Epidemic Spread and Diffusion}: This work bears a similarity with the literature on the optimal control of information spread, in that both aim to optimize a terminal function subject to some spread dynamics. Most such work uses compartmental epidemic models (e.g., SI \cite{kandhway2016campaigning, eshghi2015optimal}) and is thus dissimilar in dynamics to the one we consider. Furthermore, we show that when opinions can take continuous values (instead of the finite fixed values assumed in compartmental models), the optimal controls for influence maximization are significantly different to the strategies derived for information spread (which typically advocate some form of maximal spreading at the start of the time interval \cite{eshghi2015optimal, kandhway2014run}). The model also allows an even more explicit incorporation of graph structure than metapopulation models, e.g.,  \cite{eshghi2016optimal}, as their approximation breaks down when the population of each patch/type is small, and therefore provides a poor model for interactions at the scale of individuals.

\emph{Adversarial Sensor Network Deception}: {Finally, the problem discussed in this paper has a direct analog in the optimal deception of a sensor network by an adversary, as discussed in \cite{cardenas2008research}. In this setting, a state-estimation sensor network \cite{mo2010false} can be misdirected through local noise injection at a fixed number of points, that will affect a subset of nodes in the vicinity. The optimal locations and patterns for the noise to affect the conclusion of the network will depend also on the dynamic information fusion model of the sensor network and its relationship with the reach of each of the noise injection points. This problem, too, will require the same type of exogenous influence and endogenous processing model as the opinion influence problem, as well as having the same objective structure. Thus, any structural results obtained will have direct implications for the adversary's optimal deception policy.} {The modeling approach employed in our work is, to the best of our knowledge, novel for this setting.}

In summary, {our} work  integrates elements of the rich literature in linear consensus protocols, spectral graph theory, and optimal control, and applies the synthesis to the problem of resource allocation in advertising, achieving strong structural guarantees and applied insights.

\section{System Model Description}
In this section, we present our notation (\S \ref{subsec:notation}) and outline our system model (\S \ref{subsec:model}) and its dynamics (\S \ref{subsec:dynamics}). Then we outline the bounds on the actions of the influencer (\S \ref{subsec:bounds}) and describe their objective (\S \ref{subsec:objective}). We finish the section by presenting a technical assumption (\S
\ref{subsec:technical}) and by stating the overall problem (\S \ref{subsec:overall}).

\subsection{Notation}\label{subsec:notation}

\begin{align*}
n&=\text{number of agents}\allowbreak\\
m&=\text{number of channels}\allowbreak\\
x_i(t) &=\text{opinion of agent $i$ at time $t$},~~i =1,\ldots,n\allowbreak\\
u_k(t) &=\text{utilization of channel $k$ at time $t$},~~k =1,\ldots,m\allowbreak\\
u_k^{\max}(t) &=\text{maximum utilization of channel $k$ at time $t$}\allowbreak\\
a_{ij} &=\text{magnitude of effect agent $j$'s opinion on the opinion of agent $i$}\allowbreak\\
N_{i} &=\text{neighbors of agent $i$ in communication graph $G$}\allowbreak\\
H_{k} &=\text{set of agents within the reach of channel $k$}\allowbreak\\
b_{ik} &=\text{relative magnitude of the effect of utilization of channel $k$ on agent $i$}\allowbreak\\
e_i(t) &=\text{sum effect of other influences on agent $i$ at time $t$}\allowbreak\\
T &=\text{terminal time}\allowbreak\\
c_k(\cdot) &=\text{cost of influence on channel $k$}\allowbreak\\
r &=\text{total resources of influencer over the time period}\allowbreak\\
\mathcal{U} &= \text{set of feasible influence allocations}\\
J_{i}(\cdot) &=\text{value of opinion of agent $i$ at time $T$ to influencer}\allowbreak\\
J(\cdot) &=\text{value of opinion profile at time $T$ to influencer}
\end{align*}

\normalsize
We use bold lower case letters to denote vectors and bold upper case letters to denote matrices, $[n]$ to represent $\{1, 2,\ldots, n\}$, and $\inp*{a}{b}$ to represent $\overrightarrow{a}^T\overrightarrow{b}$.
For a matrix $\matr{W}$, we denote the $k$-th column of $\matr{W}$ as $\overrightarrow{W}(:,k)$, and the $k$-th row of the same as $\overrightarrow{W}(k,:)$. Furthermore, we use $w_{ij}$ to denote the $(i,j)$-th element of the matrix $\matr{W}$.

\subsection{System Model}\label{subsec:model}

We consider a social system with $n$ agents. The \emph{state/opinion} of agent $i \in [n]$ at time $t$ is denoted by $x_i(t)\in \mathbb{R}$. Each agent communicates with other agents based on an \textbf{edge-weighted}, \textbf{undirected}, and \textbf{connected} communication graph $G=(V,E,\matr{A})$\rdout{\footnote{\rdout{However, the results can be generalized to weighted directed graphs in which the weighted Laplacian has a generalized Jordan form with real eigenvalues}}}. 
The (non-negative) weight on an edge between agents $i, j \in [n]$, which determines the relative influence agent $j$ has on agent $i$'s state update, is represented by $a_{ij}$, and the matrix of such weights is represented by $\matr{A}$. An agent $j$ is said to be a \emph{neighbor} of agent $i$ (and vice versa) if $a_{ij}= a_{ji}>0$ (see Fig. \ref{fig:endo}).\footnote{While negative weight updates are conceivable, they will not be considered in this paper.} \red{The assumption of symmetric interaction weights, that a difference of opinion can have the same magnitude effect on both sides of an interaction, is common in the Influence Maximization literature, most commonly being present in the Independent Cascade (IC) model \cite{goldenberg2001talk, kempe2005influential, chen2009efficient}.} 

\red{\begin{Remark}\label{rem:asymmetric}
However, weighted \textbf{directed} communication graphs can also be considered in our framework, in which case \green{some of} our results apply to cases where the weighted Laplacian of the graph has\rdout{ a generalized Jordan form with} real eigenvalues.\footnote{\red{Lemma \ref{lem:existence} and Theorem \ref{thm:crossing}.\ref{thm:first} carry over, as does a modification of the water-filling procedure in \S \ref{subsec:waterfilling}.} \green{For specifics, see \S \ref{sec:directed}}.} \green{In particular, this includes the set of quasi-strongly connected weighted Directed Acyclic Graphs (DAGs).}
\end{Remark}}

At each time $t$, each agent updates its state based on a \emph{weighted average} of the difference of its current state from those of its neighbors, as well as on an external influence that will be described below, and a known drift signal (which may be due to the influence of other competing influencers), which we denote by $e_i(t)$ for $i\in [n]$. 

\begin{figure}
\begin{center}
\includegraphics[scale=0.35]{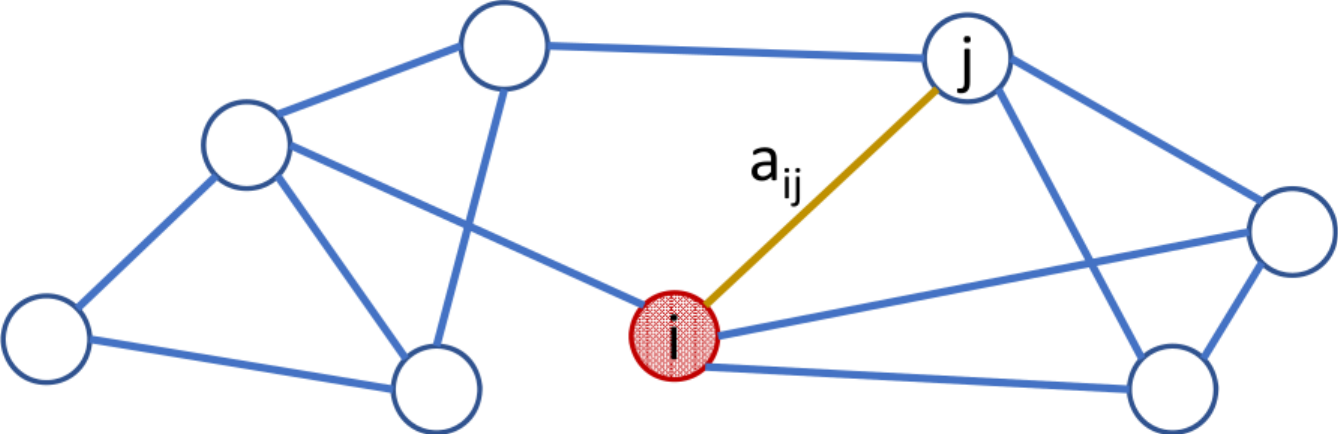}
\end{center}
\vspace{-0.1in}
\caption{\footnotesize{Each agent takes into account the opinions of its neighbors in updating its own opinion. The weight given by node $i$ to the opinion of its neighbor $j$ is a measure of how much $i$ trusts $j$'s appraisal. If agents $i$ and $j$ are not neighbors, $a_{ij}=0$ by default. In our model, $a_{ij}= a_{ji}$, i.e., trust is a symmetric relation\red{, though our results apply even if that is not the case (see conditions in Remark \ref{rem:asymmetric}).}}}\label{fig:endo}
\vspace{-0.1in}
\end{figure}

An influencer aims to shape the \emph{opinion profile} (i.e., the opinion vector of all agents) at a fixed terminal time $T$ according to an objective function through the judicious use of particular \emph{influence channels}. Each channel of influence (e.g., advertising medium) is limited in its reach, as it only affects a specific subset of agents (denoted $H_k$ for channel $k$). \hide{The structure of these channels is pre-specified and captured by a hyper-graph $G'=(V, H)$ on the same node set.}\hide{ There are $|H|= m$ hyper-edges (channels) that can be influenced, with the assumption that influencing a channel only directly affects the members within that channel.}
{The structure of these $m$ influence channels is pre-specified, with the assumption that influencing a channel only directly affects the members within that channel.} The influence exerted by the influencer on channel $i\in [m]$ at time $t$ is denoted by the scalar $u_i(t)$. 

In this model, the effect of influence on a channel can differ across agents within the channel, potentially even having opposite effects. These effects are captured by the \emph{influence gain}, denoted by $b_{ik}$, which determines the linear relative gain of influence of channel $k \in [m]$ on agent $i$ within that channel. \blue{For example, if billboard advertising (say, channel $k$) has a more positive effect on the opinion of individual $i$ than advertising on the radio (say, channel $l$), we will have $b_{ik}> b_{il}$.}\footnote{\red{Note that the magnitude of $b_{ik}$ can be determined by comparing the size of the effect of channel $k$ on individual $i$'s opinion with that of one of $i$'s neighbors having the same amount of difference in opinion with $i$.} \blue{At scale, these orderings and values can possibly be inferred from demographic information.}} If agent $i$ is not within channel $k$, we define $b_{ik}$ to be zero. Without loss of generality, we assume $u_i\geq 0$ for $i \in [m]$, {and encode the possible negative effects of channel $i$ on agent $k$ within its reach through the sign of $b_{ik}$}. Stacking these values into a matrix $\matr{B}_{n*m}$ {captures the \hide{hypergraph} structure of the channels.} 

\subsection{Dynamics}\label{subsec:dynamics}

To understand the dynamics, we provide the following discrete-time intuition: an agent $i\in [n]$ constructs its change in state in the time interval $(t,t+\Delta)$ based on the weighted difference between its own state and that of its neighbors, as well as the external influence exerted on it in that time period and the drift signal:
\begin{align*}
x_i(t+&\Delta)=x_i(t)+\Delta \bigg(\sum_{j\in N_i}a_{ij}(x_{j}(t)-x_{i}(t)) + \sum_{k:i\in H_k}b_{ik}
u_{k}(t)
+ e_{i}(t)\bigg).
\end{align*}
This simply states that agents attempt to align their state/opinion with that of their neighbors, and the influencer's effort may act as a hindrance to that process. 
Mathematically, it can be thought of as a gradient descent algorithm implemented by agents seeking to minimize disagreement (measured by a Laplacian potential) \cite{olfati2004consensus}. The above can be re-written to represent the classic discrete-time consensus model \cite{degroot1974reaching} with influence:\footnote{The results derived in this paper would also apply to the Friedkin and Johnsen model of opinion updates \cite{friedkin1990social} given uniform susceptibility to change across agents.}
\begin{align*}
x_i(t+\Delta)=&(1- \Delta \sum_{j\in N_i} a_{ij}) x_i(t)+ \Delta \sum_{j\in N_i}  a_{ij} x_{j}(t) + \Delta  \bigg(\sum_{k:i\in H_k}b_{ik}
u_{k}(t)
+ e_{i}(t)\bigg).
\end{align*}
\red{Note also that, from this formulation, it is evident that the influencer's effect on the state of any individual is dissipative, as their prior state is discounted by a factor of $(1- \Delta \sum_{j\in N_i} a_{ij})<1$ at every time-step.}\footnote{
\blue{This can be seen by looking at the explicit effect after two time-steps (i.e., time $t+ 2\Delta$), where the effect of direct influence exerted at time $t$, $\overrightarrow{u}(t)$, is multiplied by $(1- \Delta \sum_{j\in N_i} a_{ij}) $:
$x_i(t+2\Delta)=\ldots + \Delta \sum_{k:i\in H_k} b_{ik}u_k(t+ \Delta) + \underline{\mathbf{(1- \Delta \sum_{j\in N_i}} a_{ij})} \Delta  \sum_{k:i\in H_k} b_{ik}u_k(t)+ \Delta^2 \big(\sum_{j\in N_i}\sum_{m:j\in H_m} a_{ij}b_{jm}u_{m}(t)\big) + \ldots$
}}

Subtracting $x_i(t)$ from both sides, dividing by $\Delta$ and taking the limit as $\Delta$ goes to zero, we arrive at the following continuous time agent-level dynamics:\footnote{\blue{While discrete-time dynamics are more commonly used for the modeling of opinions, discretization is typically a simplifying assumption for analytic purposes. In this paper, we work with the continuous-time dynamics directly, which allows the use of mathematical tools new to the domain. However, all derived structures and insights for the continuous case can be discretized and applied to the discrete-time case as well.}}
\begin{align*}
&\dot{x}_{i}(t)=\sum_{j\in N_i}a_{ij}(x_{j}(t)-x_{i}(t)) + \sum_{k:i\in H_k}b_{ik}u_{k}(t) + e_{i}(t).\hide{\\&=(-\sum_{j\in N_i} a_{ij})x_{i}(t) + \sum_{j\in N_i}a_{ij}x_{j}(t) + \sum_{k:i\in H_k}b_{ik}
u_{k}(t)
+ e_{i}(t)}
\end{align*}
We let $\matr{L}$ be the weighted Laplacian matrix, where for all $i,j \in [n]$ such that $i\neq j$, $l_{ij}= - a_{ij}$, and $l_{ii}= \sum_{j\in N_i} a_{ij}$ for all $i \in [n]$.
Stacking the $n$ equations, we arrive at the following system-level dynamics:
\begin{align}\label{eq:vecdynamics}
 \dot{\overrightarrow{x}}(t)= - \matr{L}\overrightarrow{x}(t) + \matr{B}
\overrightarrow{u}(t)
 + \overrightarrow{e}(t)
\end{align}
We assume that the  states/opinions at time $0$ are known ($\overrightarrow{x}(0)=\overrightarrow{x_0}$), however we will see that the value of the states at time $0$ has no direct bearing on our structural results.

\subsection{{Admissible Control Strategies}}\label{subsec:bounds}

The total expenditure on all channels is bounded by $r>0$, {which is the budget available to the influencer}. This is captured through the following budget constraint:
\begin{align}\label{eq:influencecapital}
\int_0^T \sum_{k=1}^m c_k(u_k (t))\,dt\leq r ~~&~~\text{(budget {constraint})},
\end{align}
where $c_k(\cdot)$ represents the \red{time-independent} cost-function \rdout{associated with  investing in channel $k$}\red{which maps the utilization of channel $k$ to its associated cost to the influencer}.
\begin{Assumption}\label{ass:increasing}
We assume that for all $k \in [m]$, $c_k(\cdot)$ is increasing, differentiable and concave \red{as a function of channel $k$'s utilization}. Furthermore, without loss of generality, we assume that $c_k(0)=0$ for all $k\in [m]$.
\end{Assumption}
This assumption models the diminishing cost of additional utilization of a channel once it is already in use. 
The above assumption allows the case where $c_k$ is linear (that is $c_k(x)= v_k x$). We assume that for all channels  $k\in [m]$, the influence that can be exerted on channel $k$ at any time $t$ is bounded above by a time-varying value $u_k^{\max}(t)
$.\footnote{This rules out impulse controls.} This can capture both physical limits on the influence {(i.e., availability of media)} and limits on the susceptibility of agents to the influence. We impose the modest assumption that $u_k^{\max}(t)$ is differentiable. Hence, we have that:
\begin{align}\label{eq:influencebounds}
 \forall {k\in [m]},~~~~0\leq u_k (t) \leq u_k^{\max}(t)~
\text{(influence constraint)}
\end{align}
{We will restrict our analysis to control signals $\overrightarrow{u}$ that are piecewise continuous with only a finite number of discontinuities\footnote{This means, in particular, that the integral in \eqref{eq:influencecapital} is well-defined.}. We shall use $\mathcal{U}$ to denote the set of such controls that fulfill \eqref{eq:influencebounds}:}
\begin{align*}
\mathcal{U}=\{\overrightarrow{u}: 0\leq u_k (t) \leq u_k^{\max}(t)
, k \in [m], t\in [0,T]\}.
\end{align*}
{In our model, employing channel $k \in [m]$ at effort level $u$ at time $t \in [0,T]$ incurs a cost of $c_k(u)$, where for all $k\in [m]$, $c_k: {[0, + \infty)\hide{\max_{t\in [0,T]}u_k^{\max}(t)]}} \to [0, +\infty)$. Note that we assume that for all $k\in [m]$, the cost function $c_k(\cdot)$ is time-invariant.}

 \hide{influencing each channel at a particular time has a cost function that depends on the channel and takes as input the level of influence that the influencer exerts on that channel. We assume that this function is fixed over the time interval.}\hide{\footnote{The case where cost functions are time-dependent (i.e., TV advertising) can also be analyzed as presented, however the obtained results will be weaker.} }

\subsection{Objective}\label{subsec:objective}

The objective of the influencer is a function of the opinion profile at a fixed time $T$. The nature of the function will depend on the information aggregation method employed by the set of individuals. We consider the most general case, where any increase in the opinion of any particular individual at time $T$ (keeping all other opinions the same) is not detrimental to the influencer. This is obviously the case in both political and marketing campaigns. While our reasoning applies to a general family of objective functions, we will give special consideration to functions that model voting in an election between two options (relevant in the political campaign setting) and weighted averaging (relevant in estimating total returns from marketing efforts and in the sensor network setting).

\begin{Assumption}
The objective, $J(\overrightarrow{x}(T))$, is an increasing\hide{\footnote{This encodes the non-negative benefit of an increase in opinions at time $T$ to the influencer.}}, differentiable function of the $n$ components of the vector of terminal opinions $\overrightarrow{x}(T)$. 
\end{Assumption}
\hide{This is commonly known in the optimal control literature as a Mayer-type cost \cite[page 70]{liberzon2012calculus}.} In particular, we will elaborate on the application of our results to a particular family of objective functions that are separable in the elements of the vector of opinions, as follows:
\begin{align}\label{obj:separable}
J(\overrightarrow{x}(T)) = \sum_{i=1}^n J_i(x_i(T)).
\end{align}
Two specific types of separable functions are of particular interest:

1) {\bf Linear functions}:
\begin{align}\label{obj:linear}
J_i(x_i(T))=p_i x_i(T), ~~~ p_i>0,
\end{align}
which model the simplest case, where the utility the influencer gains from an individual has a linear relationship with its state at time $T$. In the marketing example, this can model the amount of sales as a simple function of an individual's opinion of a product. This is also a useful approximation in the adversarial sensor network deception case where the utility for the influencer is a simple function of a sensor's report level. The mapping of the election example to this utility is not direct: this can model the case where each agent  votes with probability $p_i$, and if it does, chooses among two options with a probability that is linearly related to their opinion (i.e., they flip an appropriately weighted coin). However, finding the correct total weight of the coin to be considered depends on the assumptions made by the modeler, and multiple normalizations may be defensible. This ambiguity leads to the definition of a second type of utility for the specific case of the election example.

2) {\bf Sigmoid functions}:
Assume each individual $i \in [n]$ has to vote for one of two options (e.g., candidates, products, policies), encoded by $0$ and $1$, at time $T$. Assume that the  
influencer backs option $1$ (without loss of generality). 
Each individual is assumed to vote with probability $p_i>0$, and to choose who to vote for among two options based on whether their state at time $T$ is above or below an agent-specific threshold $\theta_i$ (which models the various biases for and against an option). Thus, the utility gained from each individual can be modeled using a Heaviside function with a jump at $\theta_i$, which is agent $i$'s vote. However, this utility is discontinuous at $x_i(T)=\theta_i$, which complicates analysis.
The sigmoid function\hide{(see Fig. \ref{fig:sigmoid})}:
\begin{align}\label{obj:sigmoid}
J_i(x_i(T))=\frac{p_i}{1+e^{-\alpha_i(x_i(T)-\theta_i)}},
\end{align} 
 is a smooth approximation to the Heaviside utility, with the closeness of the approximation being determined by the choice of the parameter $\alpha_i$ --- the greater $\alpha_i$ is, the faster the transition. In the extreme of taking $\alpha_i$ to infinity, this function will indeed converge to the aforementioned Heaviside function.

\subsection{Technical Assumption}\label{subsec:technical}

We now add a technical assumption that will be needed in our arguments:
\begin{Assumption}\label{ass:positive}
{There exists a $j\in [n]$ such that $\dfrac{\partial J(\overrightarrow{z})}{\partial z_j(T)}>0$ for all $\overrightarrow{z}\in \mathbb{R}^n$.}
\end{Assumption}
Note that this is equivalent to saying there exists at least one individual such that the influencer always values a marginal increase in its state. That is, holding all opinions the same, any increase in that agent's opinion will be translated to a strict increase in their likelihood of voting for the choice backed by the influencer.\footnote{{This rules out $J(\cdot)$ functions with stationary points, i.e., those for which $\nabla_{\overrightarrow{z}} J(\overrightarrow{z})= \overrightarrow{0}$ for some $\overrightarrow{z}\in \mathbb{R}^n$. For example, this rules out an objective which is a sum of Heaviside functions.}} The purpose of this assumption is to rule out a pathological case where the necessity conditions for the optimality of an allocation become so general that they apply to all controls and are thus uninformative.

\subsection{Problem Statement}\label{subsec:overall}

We aim to characterize the control inputs $\overrightarrow{u}(t)$ that maximize $J(\overrightarrow{x}(T))$ under the dynamics outlined in \eqref{eq:vecdynamics} and constraints \eqref{eq:influencebounds} and \eqref{eq:influencecapital}. Mathematically, we state our problem as:

\begin{align*}
\max_{\overrightarrow{u}\in\mathcal{U}} ~~~~&J(\overrightarrow{x}(T)) \notag\\
\text{s.t.} ~~~~& \dot{\overrightarrow{x}}(t)= - \matr{L}\overrightarrow{x}(t) + \matr{B}\overrightarrow{u}(t)+ \overrightarrow{e}(t),~~ \overrightarrow{x}(0)= \overrightarrow{x_0}\in \mathbb{R}^n \\
\hide{&0\leq u_m (t) \leq u_m^{\max},
~~~~ \forall {m},\\}
&\int_0^T \sum_{k=1}^m c_k(u_k (t))\,dt\leq r,\\
& \overrightarrow{e}(t) \text{~~given}.
\end{align*}
{Note that the above problem is non-convex in general owing to the potentially non-convex objective function, as well as the potentially non-convex budget constraint (when any of the $c_i(\cdot)$'s are strictly concave). In this paper, we solve it using tools from optimal control theory.} 
\hide{This problem has a general-form, potentially non-convex objective function, as well as a potentially non-convex budget constraint (when any of the $c_i(\cdot)$'s are strictly concave).} It should be observed that the number of competing influence channels, $m$, and therefore the number of optimization variables, can potentially be large. These factors complicate naive approaches to solving the problem.

We reformulate the problem with auxiliary variables to aid the analysis. We define the auxiliary functions $\gamma$ and $q$ such that,
\begin{align}
\gamma(0)&=0, &\dot{\gamma}(t)= - \sum_{k=1}^m  c_k(u_k(t)), \label{eq:newstates}\\
{q(0)}&={0}, &{\dot{q}(t)=1 \label{eq:newstates1}.\hide{, ~(\text{proxy for time})}}
\end{align}
As can be seen, $\gamma(t)$ is the accumulated cost of the influence up to time $t${, and} ${q}$ {is a proxy for time}\footnote{\green{The problem, as originally stated, is \emph{non-autonomous}, i.e., the dynamics depend explicitly on the independent variable through the function $\overrightarrow{e}(t)$. It can be simplified to the \emph{autonomous} case, which is more suitable for computation, by removing explicit time-dependence in the dynamics through the introduction of the dummy variable $q$ that is always equal to time \cite[page 167]{seierstad1986optimal}.}}.
Thus, the budget constraint becomes $\gamma(T)\geq-r$, and the integral constraint has been transformed to a terminal time one. So we can rewrite the optimization as:
\begin{align}
\max_{\overrightarrow{u}\in \mathcal{U}} ~~~~&J(\overrightarrow{x}(T)) \label{prob:reformulated}\\
\text{s.t.} ~~~~& \dot{\overrightarrow{x}}(t)= - \matr{L}\overrightarrow{x}(t) + \matr{B}\overrightarrow{u}(t)
 + \overrightarrow{e}({q}),~~ \overrightarrow{x}(0)= \overrightarrow{x_0}\in \mathbb{R}^n,\notag\\
&\dot{\gamma}(t)= - \sum_{k=1}^m  c_k(u_k(t)){, ~~~\dot{q}(t)=1}, ~~~\gamma(T)\geq-r,\notag\\
& \overrightarrow{e}(t) \text{~~given}, ~~\gamma(0)=0{,~~ q(0)=0}.\notag
\end{align}

\section{Results}
In this section, we outline the analytical structures of the optimal controls. To show the nature of the results, we first explain some necessary priors in \S \ref{subsec:pre}. Then, we prove the existence of optimal controls (under some conditions) and identify their structure using our main theorem in \S \ref{subsec:main} (with proofs in \S Appendices \ref{sec:lemmaexist} and \ref{subsec:proof1}, respectively). A refinement is presented for the case of the linear objective \S \ref{subsec:waterfilling} that allows the direct computation of the control input $\overrightarrow{u}$ and shows that the optimal control is unique, while providing insights into the logic of the allocation decision. Finally, the sigmoid approximation to voting is covered in \S \ref{subsec:sigmoid} and an approximation to the optimal control is presented.

\subsection{Preliminaries}\label{subsec:pre}

For an {\bf undirected, connected} graph $G$, the weighted Laplacian matrix $\matr{L}$ is real, symmetric, and positive semi-definite; hence it has real, non-negative eigenvalues \cite[page 13]{chung1997spectral}. Thus, $\matr{L}$ has an eigen-decomposition $\matr{L}=\matr{Q} \matr{\Xi} \matr{Q}^T$, where $\matr{Q}$ is a \red{real} orthogonal matrix whose columns are the eigenvectors of $\matr{L}$, and $\matr{\Xi}$ is the diagonal matrix of eigenvalues of $\matr{L}$) \red{\cite[p. 393, Theorem 8.1.1]{golub2012matrix}}.\footnote{However, the reasoning below applies to {any $\matr{L}$ that has \rdout{a generalized Jordan form with }real eigenvalues} \green{for the case of linear costs $c_i(\cdot)$ and a quasi-strongly connected communication digraph. For details, see \S \ref{sec:directed}.}} The smallest eigenvalue of $\matr{L}$ is always \textbf{zero}, its multiplicity is 1, and its associated eigenvector is $\frac{1}{\sqrt{n}}\overrightarrow{1}_{n}$, where $\overrightarrow{1}_{n}=(1,\ldots,1)^T$ (as $G$ is connected) \cite[page 13]{chung1997spectral}. We will order the eigenvalues of $\matr{\Xi}$ smallest to largest ($\xi_1=0<\xi_2\leq\ldots\leq \xi_n$) and, therefore, column $i\in [m]$ of $\matr{Q}$, $\overrightarrow{Q}(:,i)$ will be the $i$-th eigenvector of $\matr{L}$\hide{\footnote{Notice this is possible because $\matr{L}$ is Hermitian and therefore the geometric multiplicity of all eigenvalues is equal to their algebraic multiplicity \cite[page 6-5]{hogben2006handbook}.}}. This means that $\overrightarrow{Q}(:,1)=\frac{1}{\sqrt{n}}\overrightarrow{1}_{n}$.

We now state a lemma that shows that an optimal control $\overrightarrow{u}$ for the main problem exists. We then state our main result (Theorem \ref{thm:crossing}) and present a subcase where the bound can be significantly strengthened and the optimal control can be calculated in open-loop (Theorem \ref{thm:linear}). We provide proofs of these results in \S Appendix \ref{sec:lemmaexist} and \S Appendix \ref{subsec:proof1}, respectively. 

\subsection{Existence of Optimal Solutions and Structural Results for the Optimal Control}\label{subsec:main}

We prove (in \S Appendix \ref{sec:lemmaexist}) that optimal controls for \eqref{prob:reformulated}, exist when $c_i(\cdot)$'s are linear.
\begin{Lemma}\label{lem:existence}
Optimal controls for problem \eqref{prob:reformulated} exist for linear costs, i.e., $c_i(u_i)=v_i u_i$ for all $i\in [m]$.
\end{Lemma}
We are now ready to state our main theorem. We will provide results for a large natural class of channels that we shall call \emph{disciplined}. We first formally define the set of {disciplined} channels $\mathcal{I}$ before stating the theorem:
\begin{Definition}\label{def:disciplined}
The set of disciplined channels, $\mathcal{I}\subseteq [m]$, is such that for all $i \in \mathcal{I}$, one of the two following conditions holds: 
\begin{compactitem}
\item $c_i(\cdot)$ is strictly concave, $\inp*{\overrightarrow{B}(:,i)}{\overrightarrow{{1}}_n}\neq 0$, and $u_i^{\max}(t)=u_i^{\max}$ for all $t$.
\item $c_i(\cdot)$ is linear and the system $(\matr{L}, \matr{L}\overrightarrow{B}(:,i))$ is controllable \cite[p. 144]{chen1995linear} (with $u_i^{\max}(t)$ being any differentiable function).
\end{compactitem}

\end{Definition}

\begin{Theorem}\label{thm:crossing}
For all $i\in \mathcal{I}$: 
\begin{enumerate} 
\i\label{thm:first} Optimal controls are \textbf{\emph{bang-bang}}, taking on their maximum or minimum values at all times $t$ (i.e., $u^*_i(t)\in \{u^{max}_i, 0\}$). 

\i The number of switches between these values is bounded above:

\begin{enumerate}
\i\label{thm:parta} In the general case, by one less than the number of non-zero elements in $\{\inp*{\overrightarrow{Q}(:,j)}{\overrightarrow{B}(:,i)}\}_{j=1}^n$.

\i\label{thm:partb} For $J(\overrightarrow{x}(T))=\inp*{\overrightarrow{p}}{\overrightarrow{x}(T)}$, by the number of sign variations in $\{\sum_{k=1}^j s_k \}_{j=1}^n$, where $s_j : = \inp*{\overrightarrow{Q}(:,j)}{\overrightarrow{p}} \inp*{\overrightarrow{Q}(:,j)}{\overrightarrow{B}(:,i)}$. 
\end{enumerate}
\end{enumerate} 

\end{Theorem}

An example of an optimal control with these characteristics is provided in Fig. \ref{fig:bangbang}. The proof of this theorem is presented in \S Appendix \ref{subsec:proof1}. 

This theorem means that the optimal strategy uses each channel in waves (see Fig. \ref{fig:bangbang}), stopping between them to let influence propagate. From a computational stand-point, this result simplifies the space of possible optimal controls for each channel, since the optimal control is characterized by the bounded number of switching times for each channel. The actual number of switches of each optimal control can be significantly less than the fixed upper-bound of $n-1$ (which can in general be very high), as we will see in \S \ref{sec:simulations}.

\begin{Remark}
The conditions in Definition \ref{def:disciplined} rule out pathological cases where the necessary conditions for optimality derived from the Maximum Principle \cite[page 182]{seierstad1986optimal} cannot directly determine the optimal value of the control (i.e., singular arcs \cite[page 113]{liberzon2012calculus} exist).
\end{Remark}

\begin{Remark}\label{rem:disciplined}
The set of disciplined channels may be a proper subset of the set of channels ($\mathcal{I}\subset [m]$), in which case the derived structure only applies to disciplined channels. This means that even if $c_k(\cdot)$ is non-concave or  the conditions around $\overrightarrow{B}_k$ in Definition \ref{def:disciplined} do not hold for some $k$, \textbf{Theorem \ref{thm:crossing} will remain valid for disciplined channels}. Note that optimal controls for undisciplined channels may also abide by the bang-bang structures stated in Theorem \ref{thm:crossing}.
\end{Remark}

\begin{figure}[htb]
\begin{center}
\includegraphics[scale=0.82]{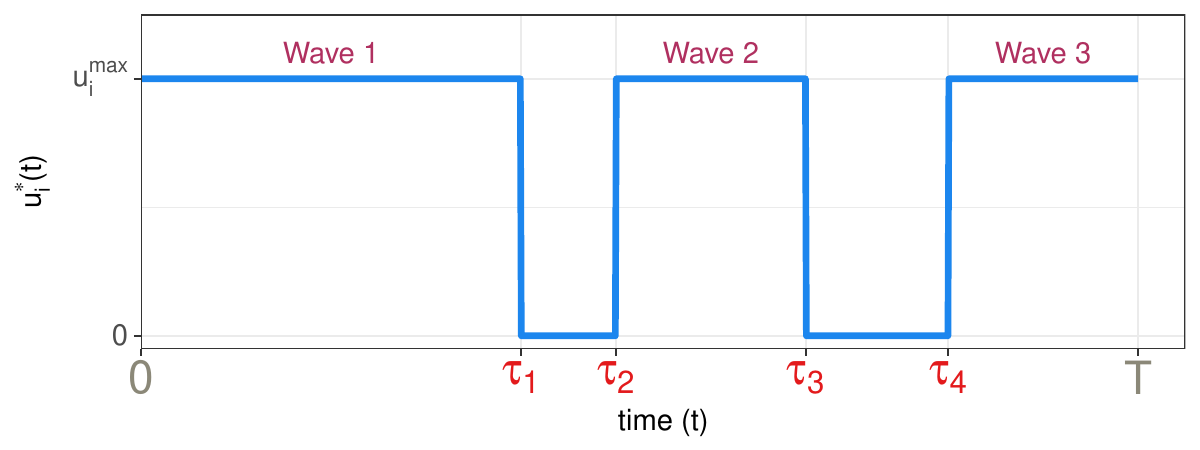}
\vspace{-0.1in}
\caption{\footnotesize{For a $c_i(\cdot)$ and $\protect\overrightarrow{B}(:,i)$ fulfilling the conditions of Theorem \ref{thm:crossing}, the optimal control $u_i(t)$ will be bang-bang, only taking its minimum or maximum values and switching between them a bounded number of times. Thus, the function can be fully described by the set of switching times $\{\tau_i\}_i$, {making them easier to compute, store, and implement.}}
}
\label{fig:bangbang}
\end{center}
\vspace{-0.2in}
\end{figure}

\subsection{Water-filling: Optimal Budget Allocation for Separable Linear Objectives}\label{subsec:waterfilling}

In this section, for separable linear objectives, we will derive a detailed cost-effectiveness metric for channel $i$'s utilization that depends on the eigenvalues and eigenvectors of the Laplacian ($\matr{L}$), the channel influence gain vector ($\overrightarrow{B}(:,i)$), and the weights of the linear objective ($\overrightarrow{p}$). The variation of this metric across time will result in \emph{hills} and \emph{valleys} that represent the variations in the effectiveness of the channel across time. Choosing a water-line for this topography (see Fig. \ref{fig:waterfilling}), we will show, leads to the description of a candidate control which takes its maximum values when a hill is above water, and will be set to zero when a valley is under water. This waterline is varied {using the bisection/binary search method} so that the cost of the total area above water matches the budget constraint \eqref{eq:influencecapital}. We will further show how this approach generalizes for more varied objective functions.

From the proof of Theorem \ref{thm:crossing} in \S Appendix \ref{subsec:proof1}\hide{\footnote{ \eqref{eq:explicitdependence} coupled with the Hamiltonian-maximizing condition of the Maximum Principle leads to \eqref{eq:cases-concave} and \eqref{eq:cases-linear} as explicit necessary conditions for the optimal controls. We then replace \eqref{eq:equation} explicitly into the aforementioned necessary conditions for the case of $J(\overrightarrow{x}(T))=\inp*{\overrightarrow{p}}{\overrightarrow{x}(T)}$.}}, we can define:
\begin{align}\label{eq:weights} h_i(t)&=\inp*{\overrightarrow{\bLam^*}(t)}{\overrightarrow{B}(:,i)}=\sum_{j=1}^N\inp*{\overrightarrow{Q}(:,j)}{\overrightarrow{p}}\inp*{\overrightarrow{Q}(:,j)}{\overrightarrow{B}(:,i)}e^{-\xi_{j}(T-t)},
\end{align}
where $\overrightarrow{Q}(:,j)$ is the $j-$th eigenvector of the Laplacian matrix $\matr{L}$ with associated eigenvalue $\xi_j$, and $\overrightarrow{p}$ is the vector of weights of the linear objective, i.e., $J(\overrightarrow{x}(T))= \inp*{\overrightarrow{p}}{\overrightarrow{x}(T)}$, such that the necessary condition for optimal controls in the concave $c_i(\cdot)$ case becomes \footnote{\red{The question mark denoting the fact that PMP does not uniquely determine the optimal $u_i^*$ at times $t$ when $\varphi_i(t, u_i)$ does not change with $u_i$.}}:
\begin{align}\label{eq:cases-concave1}
\hspace*{-0.3in}u^*_i(t)=
\begin{cases}
u^{\max}_i, ~ &\text{if} ~h_i(t)> \beta^*(T) \dfrac{c_i(u^{\max}_i)}{u^{\max}_i},\\
0, ~  &\text{if}~ h_i(t)< \beta^*(T) \dfrac{c_i(u^{\max}_i)}{u^{\max}_i},\\
\text{?}, ~ &\text{if}~ h_i(t)= \beta^*(T) \dfrac{c_i(u^{\max}_i)}{u^{\max}_i},
\end{cases}
\end{align}
and for the linear $c_i(\cdot)$:
\begin{align}\label{eq:cases-linear2}
u^*_i(t)=
\begin{cases}
u^{max}_i(t), \quad &\text{if} \quad h_i(t) > \beta^*(T) v_i,\\
0, \quad &\text{if} \quad  h_i(t)  < \beta^*(T) v_i,\\
\text{?},\quad &\text{if} \quad h_i(t)  = \beta^*(T) v_i,
\end{cases}
\end{align}
for some optimal \emph{a priori} unknown parameter $\beta^*(T)$. 
All other terms in \eqref{eq:cases-concave1} and \eqref{eq:cases-linear2} are explicitly computable without solving the optimal control problem. Thus, determining $\beta^*(T)$ will determine $\overrightarrow{u}(t)$ for all $t$ except for a finite, explicitly bounded number of points (notice that the existence of singular controls was ruled out in the proof of Theorem \ref{thm:crossing}). However, as we shall see in \eqref{eq:terminal2} \red{of the appendix}, $\beta^*(T)>0$ if and only if
$\int_0^T \sum_{k=1}^m c_k(u_k (t))\,dt= r$ \hide{However, $\beta^*(T)$ has to satisfy \eqref{eq:terminal2}, which means $\beta^*(T)>0$ if and only if $\gamma(T)+r =0$, or
$\int_0^T \sum_{k=1}^m c_k(u_k (t))\,dt= r$.} This last equation is the budget constraint.

Define the equivalent of \eqref{eq:cases-concave1} and \eqref{eq:cases-linear2} as functions of a variable $\hat{\beta}(T)$, an estimate for $\beta^*(T)$:
\begin{align}\label{eq:cases-concave3}
\hspace*{-0.3in}u_i(t, \hat{\beta}(T))=
\begin{cases}
u^{\max}_i, ~ &\text{if} ~h_i(t)> \hat{\beta}(T) \dfrac{c_i(u^{\max}_i)}{u^{\max}_i},\\
0, ~  &\text{if}~ h_i(t)< \hat{\beta}(T) \dfrac{c_i(u^{\max}_i)}{u^{\max}_i},\\
\text{?}, ~ &\text{if}~ h_i(t)= \hat{\beta}(T) \dfrac{c_i(u^{\max}_i)}{u^{\max}_i}.
\end{cases}
\end{align}
and for the linear $c_i(\cdot)$:
\begin{align}\label{eq:cases-linear4}
u_i(t, \hat{\beta}(T))=
\begin{cases}
u^{max}_i(t), \quad &\text{if} \quad h_i(t) > \hat{\beta}(T) v_i,\\
0, \quad &\text{if} \quad  h_i(t)  < \hat{\beta}(T) v_i,\\
\text{?},\quad &\text{if} \quad h_i(t)  = \hat{\beta}(T) v_i.
\end{cases}
\end{align}
One can see that in both cases, if $\hat{\beta_1}(T)> \hat{\beta_2}(T)\geq 0$, $u_i(t, \hat{\beta_2}(T))\geq u_i(t, \hat{\beta_1}(T))$ for all $i$ and all $t$. This, along with Assumption \eqref{ass:increasing}, leads to $c_i\big(u_i(t, \hat{\beta_2}(T))\big)\geq c_i\big(u_i(t, \hat{\beta_1}(T))\big)$ for all $i$ and all $t$, culminating in:

\begin{align}\label{eq:equation10}
\int_0^T \sum_{i=1}^m c_i\big(u_i(t, &\hat{\beta_2}(T))\big)\,dt \geq \int_0^T \sum_{i=1}^m c_i\big(u_i(t, \hat{\beta_1}(T))\big)\,dt.
\end{align} 

As a corollary, \eqref{eq:equation10} holds with equality if and only if $\overrightarrow{u}(t, \hat{\beta_2}(T)) = \overrightarrow{u}(t, \hat{\beta_1}(T))$ for all $t$ (excluding any switching points). Thus, if 
\begin{align}\label{eq:checking}
\int_0^T \sum_{i=1}^m c_i\big(u_i(t, \hat{\beta}(T))\big)\,dt=r,
\end{align} 
then $\overrightarrow{u}(t, \hat{\beta}(T)) = \overrightarrow{u^*}(t)$ also for all $t$. Therefore, we have the following result:
\begin{Proposition} \label{thm:linear}
For the case of separable, linear objective functions, i.e, $J(\overrightarrow{x}(T))=\inp*{\overrightarrow{p}}{\overrightarrow{x}(T)}$, the unique optimal control can be explicitly calculated using a number of evaluations of \eqref{eq:checking} that is logarithmic in the range of considered $\hat{\beta}(T)$'s.\footnote{This proposition does not apply for the general $J(\overrightarrow{x}(T))$, as the equivalent definition of $h_i(t)$ in \eqref{eq:equation10} would have to replace $\overrightarrow{p}$ with $[\partial J(\overrightarrow{x})/\partial \overrightarrow{x}]_{ \overrightarrow{x}= \overrightarrow{x}^*(T)}$, which can only be evaluated with knowledge of the optimal terminal opinion vector $\overrightarrow{x}^*(T)$.}
\end{Proposition}

Using the process outlined above, we can use a simple bisection algorithm to find $\beta^*(T)$ and to solve the optimal control problem using a \emph{single-shooting} approach\hide{(which is akin to Newton's method of finding a zero of a function \cite[page 484]{boyd2004convex})}. $\hat{\beta}(T)$ is adjusted so as to find the root of $\int_0^T \sum_{i=1}^m c_i\big(u_i(t, \hat{\beta}(T))\big)\,dt=r$. This significantly decreases the complexity of calculating the optimal control, since instead of evaluating and comparing potential optimal solutions that fulfill the necessary conditions in Theorem \ref{thm:crossing}, one can simply evaluate $\int_0^T \sum_{i=1}^m c_i\big(u_i(t, \hat{\beta}(T))\big)\,dt$ using \eqref{eq:cases-concave3} and \eqref{eq:cases-linear4} over a number of iterations that is logarithmic in the range of $\hat{\beta}(T)$ under consideration to explicitly characterize the unique optimal control.

The procedure outlined above is also instructive in understanding the relative importance of different channels at different times graphically. In particular, we will be interested in comparing ${u^{\max}_i h_i(t)}/{c_i(u^{\max}_i)}$ for concave $c(\cdot)$ and ${h_i(t)}/{v_i}$ for linear $c(\cdot)$ with $\hat{\beta}(T)$ (as in \eqref{eq:cases-concave3} and \eqref{eq:cases-linear4}). One can think of the terms containing $h_i(t)$ as a topographic relief map, signifying hills and valleys. $\hat{\beta}(T)$ represents a water-line, below which the valleys are flooded. The budget expenditure in this case is a monotone function of the area above water (see Fig. \ref{fig:waterfilling}). Therefore, the algorithm outlined is equivalent to adjusting the water-line so that the budget expenditure (evaluated as a function of the land above water) matches the budget constraint.
\begin{figure}[htb]
\begin{center}
\includegraphics[scale=0.75]{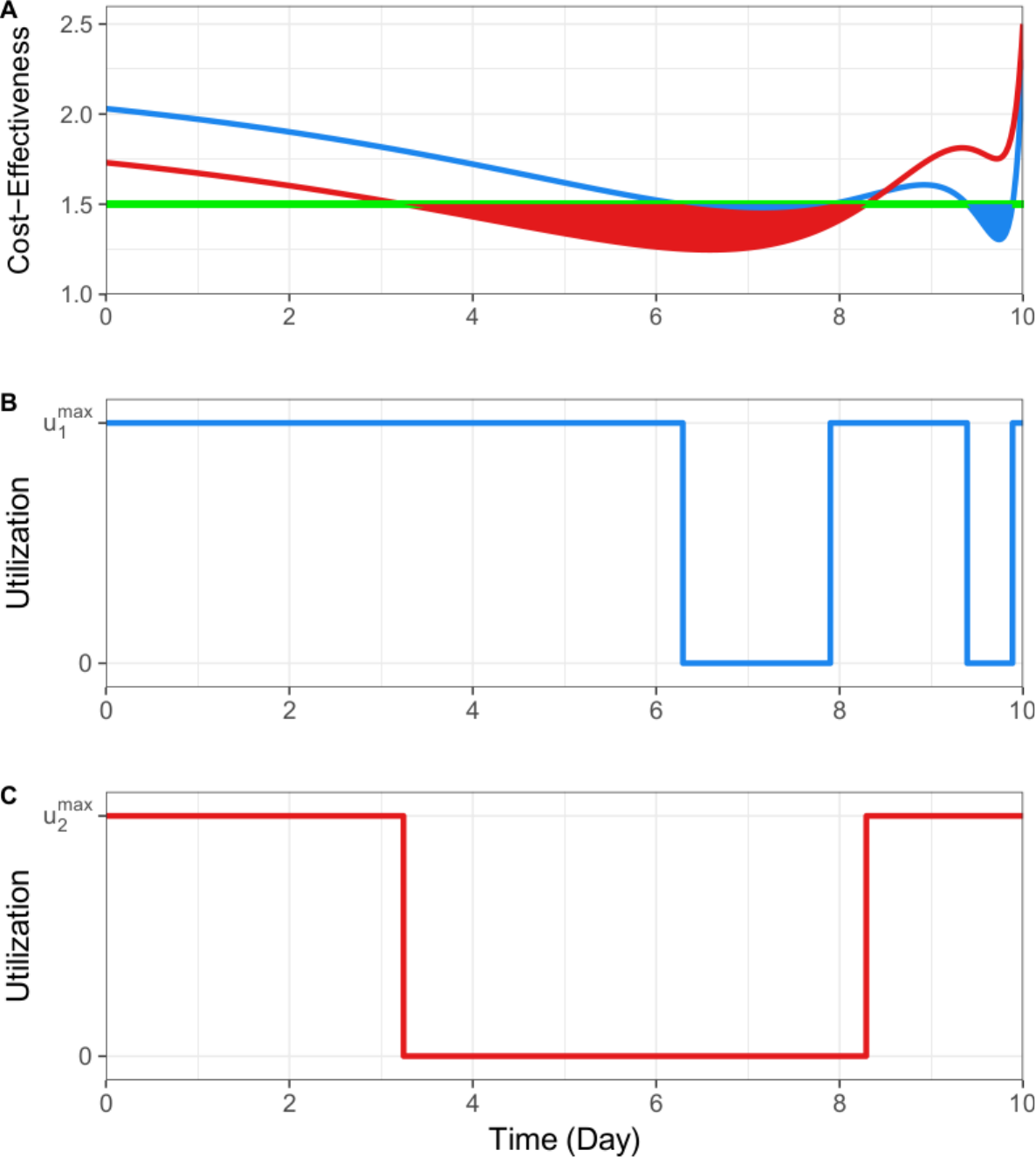}
\end{center}
\vspace{-0.1in}
\caption{\footnotesize{We demonstrate a case with two ${h_i(t)}/{v_i}$ functions for the case of linear $c_i(\cdot)$. Areas above the water-line ($\hat{\beta}(T)$) translate to $u_i(t, \hat{\beta}(T))=u^{max}_i(t)$, while those below translate to $u_i(t, \hat{\beta}(T))=0$. The amount of budget spent for this $\hat{\beta}(T)$ can thus be calculated from the resulting $u_i(t, \hat{\beta}(T))$, and so $\hat{\beta}(T)$ can be adjusted to find $\beta^*(T)$.}}
\label{fig:waterfilling}
\vspace{-0.1in}
\end{figure}

{Furthermore, the water-filling procedure shows the relative importance of  channels over time with respect to external influence}. As the optimal water-level is a monotone decreasing function of the budget available, one can see that the peaks in ${u^{\max}_i h_i(t)}/{c_i(u^{\max}_i)}$ and ${h_i(t)}/{v_i}$ signify the time intervals and channels that would be prioritized when the budget is tight, while if the budget is increased, more and more channels will be utilized at an increasing set of intervals. Therefore, we can consider the explicitly computable result of ${u^{\max}_i h_i(t)}/{c_i(u^{\max}_i)}$ (for the linear cost case, ${h_i(t)}/{v_i}$) to be a  \emph{direct metric}/ {total order} for the \emph{effect of advertising on channel $i$ at time $t$ on the outcome of the election}, which we shall henceforth call \textbf{cost-effectiveness} of a channel.\hide{\footnote{This persists in the presence of any type of noise in the the state dynamics.}}

One can extract some more insight from the structure of this metric to compare the relative importance of channels by considering \eqref{eq:weights} at extreme values of $t$:
{\begin{Remark}\label{Rem:gg}
 If $t\ll T$ (i.e., early on in the time horizon) and $T\gg \frac{1}{\xi_2}$, the deciding factor in comparing the cost-effectiveness of channels is their \textbf{total reach} (e.g., $\sum_{j=1}^n b_{ji}$ for channel $i$) per unit cost; for example, for the linear $c(\cdot)$ case:
\begin{align}\label{eq:reach}
\frac{h_i(t)}{v_i}\approx \left(\frac{1}{n}\sum_{j=1}^n p_j\right)\left(\frac{\sum_{j=1}^n b_{ji}}{v_i}\right),
\end{align}
as $\frac{1}{n}\sum_{j=1}^n p_j$ is the same for all channels.
\end{Remark}}
{\begin{proof}
From \S \ref{subsec:pre}, we know that $\xi_1=0<\xi_2\leq\ldots\leq \xi_n$, so
when $t\ll T$ and $T\gg \frac{1}{\xi_2}$, then $e^{-\xi_{j}(T-t)}\approx 0$ for $j\geq 2$ and 
$e^{-\xi_{1}(T-t)}= e^0=1$. Replacing these values, and $\overrightarrow{Q}(:,1)=\frac{1}{\sqrt{n}}\overrightarrow{1}_{n}$, in \eqref{eq:weights} completes the argument.
\end{proof}
}

\begin{Remark}\label{Rem:ll}
However, if $1-\frac{t}{T} \ll \frac{1}{\xi_{n}. T}$ (i.e., late on in the time horizon), \textbf{targeting} (e.g., how well a channel is aligned with the \emph{a priori} likelihood of people to vote) is more important than total reach; for example, for the linear $c(\cdot)$ case:
\begin{align}\label{eq:targetting}
\frac{h_i(t)}{v_i}\approx  \frac{\inp*{\overrightarrow{p}}{\overrightarrow{B}(:,i)}}{v_i},
\end{align}
\end{Remark}
{\begin{proof}
When $1-\frac{t}{T} \ll \frac{1}{\xi_{n}. T}$, then $e^{-\xi_{j}(T-t)}\approx 1$ for all $j \in [n]$. Replacing these values in \eqref{eq:weights} results in:
\begin{align*}h_i(t)&=\sum_{j=1}^N\inp*{\overrightarrow{Q}(:,j)}{\overrightarrow{p}}\inp*{\overrightarrow{Q}(:,j)}{\overrightarrow{B}(:,i)}=\inp*{\overrightarrow{p}}{\overrightarrow{B}(:,i)},
\end{align*}
due to the orthonormality of the eigenvectors in $\matr{Q}$ and the definition of an inner product.
\end{proof}
}

{This is instructive, as it shows that at the start of a campaign, cheap broadcast methods (that maximize total reach per unit cost) would be preferable to costly (premature) targeting of likely voters, while as election day approaches, the alignment of a channel with the likelihood of voting among its targets gradually increases in importance.}

\subsection{Separable Sigmoid Objective}\label{subsec:sigmoid}

{In this case, as shown in
\S Appendix \ref{subsec:proof1}, the equivalent $h_i(t)$ expression \eqref{eq:weights} will feature a term $\overrightarrow{\bLam^*}(T)$, instead of $\overrightarrow{p}$, that depends strongly on $|x^*_i(T)-\theta_i|$, how far agent $i$ is from changing their mind \eqref{obj:sigmoid}, for all $i$.} 
The further away $x^*_i(T)$ is from $\theta_i$ (i.e., the farther they are from changing their mind, or alternatively the more convinced they are), the smaller the relevant $\lambda^*_i(T)$. 
For a given $\epsilon\ll 1$, define the set of \textbf{late-deciders} \cite{hayes1996marketing} under the optimal advertising action $\overrightarrow{u^*}$ to be $\mathcal{L}:=\{j:~|x^*_j(T)-\theta_j|<\epsilon\}$. When $\mathcal{L} \neq \emptyset$ (i.e., there are late deciders), we can use the water-filling machinery in \S \ref{subsec:waterfilling} with the changes outlined below to approximate the cost-effectiveness of channels and to calculate the optimal allocation using the much faster method described therein.

We define $\overrightarrow{\bar{\bLam}}$ such that:
\begin{align*}
\bar{\lambda}_j=\begin{cases}0 ~~~& \text{for }j \notin \mathcal{L},\\
\frac{\alpha_j p_j}{2} ~~~&\text{for }j \in \mathcal{L}.\end{cases}
\end{align*}
Then, the approximate cost-effectiveness metric of channel $i$ with linear $c(\cdot)$ becomes:
\begin{align*}
\frac{h_i(t)}{v_i}= \frac{\inp*{\overrightarrow{Q}(:,j)}{\overrightarrow{\bar{\bLam}}}\inp*{\overrightarrow{Q}(:,j)}{\overrightarrow{B}(:,i)}e^{-\xi_{j}(T-t)}}{v_i}.
\end{align*}
This confirms the practical intuition that identifying the people who will decide late early in the campaign can delineate the whole trajectory of the campaign.

\section{Simulation Studies}\label{sec:simulations}
In this section, we first study a simple example to show that even in small networks, the optimal budget allocation across channels can have complicated, sometimes counter-intuitive, structures. Furthermore, we show that in many cases, the bound derived from Theorem \ref{thm:crossing} grows much slower than the number of agents, $n$. {Then, we study the performance of our algorithm on a real network derived from political discussions between MIT students prior to the 2008 US general election, and compare it to policies that use more simple centrality metrics that do not consider the temporal degrees of freedom of advertising policies.}

We first examine a network of 7 agents with linear objectives, with $\overrightarrow{p}=(3\%, 2\%, 10\%, 100\%, 6\%, 7\%, 1\%)$. Note that under these conditions, agent 4 is the only reliable voter, with all other agents having small probabilities of voting. The connections within the network are represented in Fig. \ref{fig:topology}; the off-diagonal elements of the Laplacian $\matr{L}$ are such that $l_{ij}=1$ if there is an edge in the figure between nodes $i$ and $j$ and zero otherwise.
Assume two equal (linear) cost channels are available to the advertiser: Channel 1, $\overrightarrow{B}(:,1) = (1,-1, 1, 0, 1, 0, 0)^T$ has a positive impact on agents 1, 3, and 5, but a negative impact on agent 2. It has no effect on the likely voter, agent 4. In contrast, channel 2, $\overrightarrow{B}(:,2) = (-1,1, 0, 1, 0, 0, 0)^T$, has a positive effect on the likely voter, but it has more limited effects on the rest of the agents. We solve the optimal buget allocation problem in Fig. \ref{fig:waterfilling1} using the waterfilling methodology of \S \ref{subsec:waterfilling}. As noted in Remarks \ref{Rem:gg} and \ref{Rem:ll}, at times $t\ll T=10$, the cost-effectiveness of the two equal cost channels is measured by \eqref{eq:reach}
which is larger for channel 1, even though it does not do a good job of targeting the likely voter. However, for $t$ close to $T$, we can see that the cost-effectiveness ranking  depends on the match between the reach of the channel and the likelihood of agents to vote \eqref{eq:targetting}, and therefore the cost-effectiveness of channel 2 is higher. This optimal control is bang-bang with bounded numbers of transitions, as proven in Theorem \ref{thm:crossing}. 

\begin{figure}[htb]
\begin{center}
\includegraphics[scale=0.4]{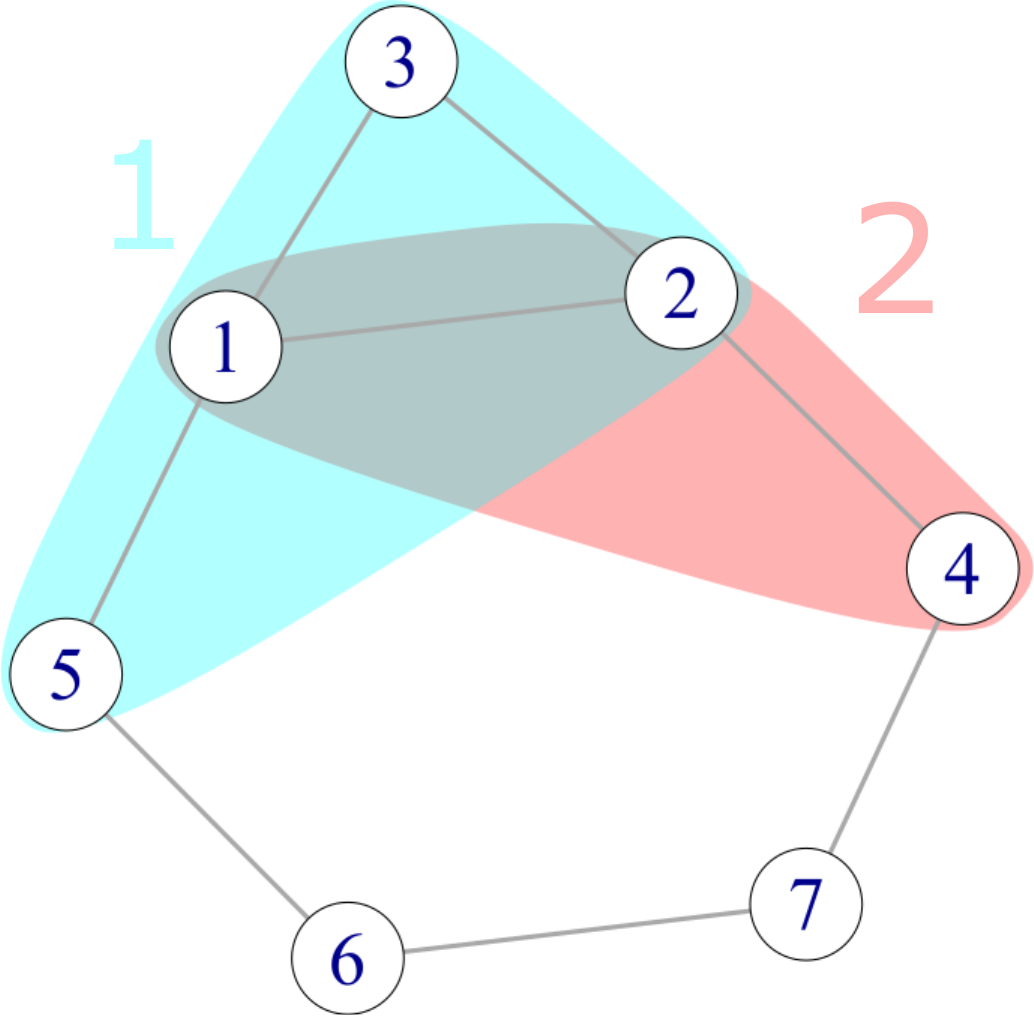}
\end{center}
\vspace{-0.1in}
\caption{\footnotesize{A network of $n=7$ agents with $m=2$ influence channels. The lines in solid black represent the underlying communciation network $\matr{L}$. The blue and red boxes delineate the two channels that are available for influence in terms of agents affected (but not intensity).}}\label{fig:topology}
\vspace{-0.1in}
\end{figure}

\begin{figure}[htb]
\begin{center}
\includegraphics[scale=0.65]{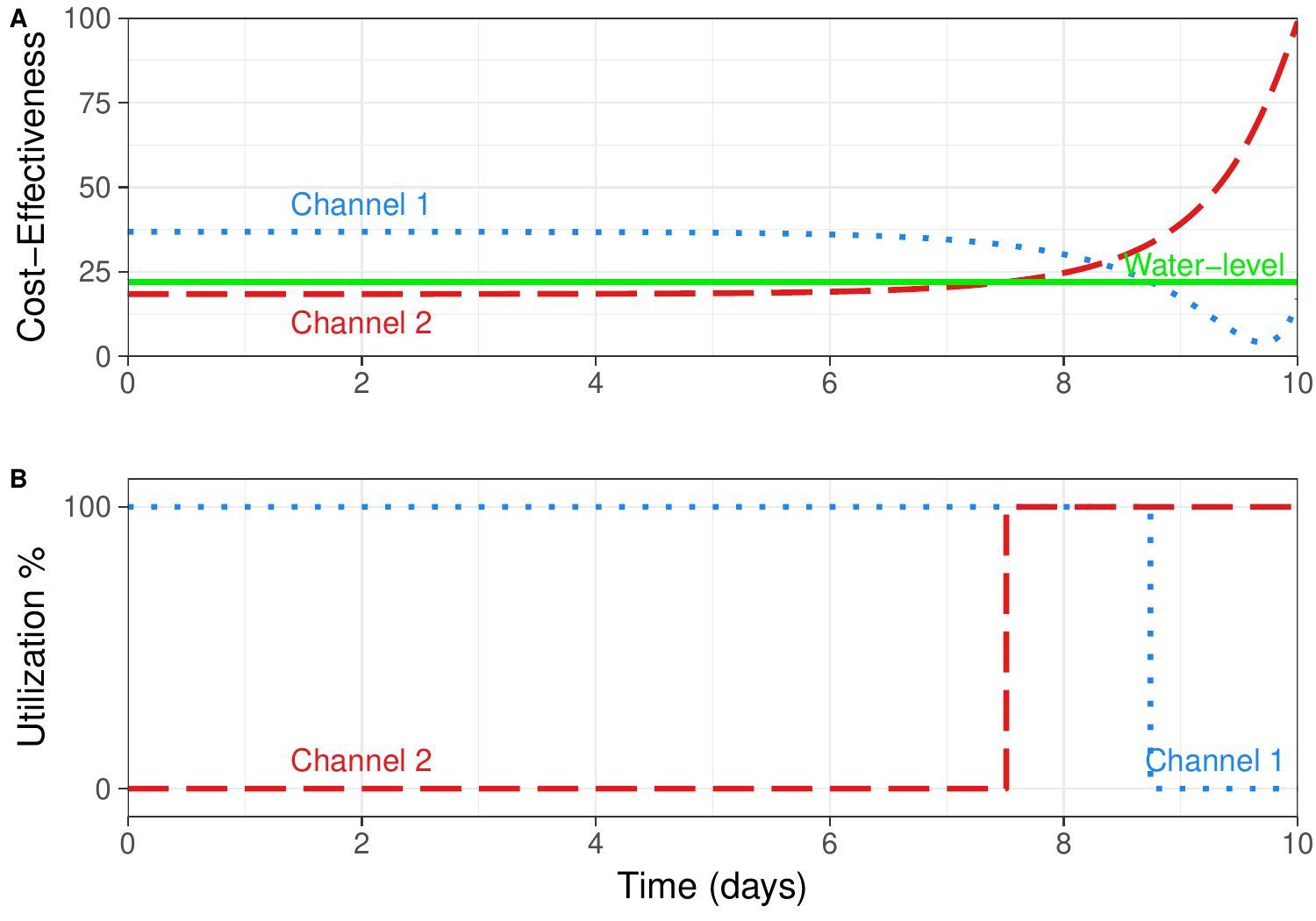}
\end{center}
\vspace{-0.2in}
\caption{\footnotesize{(A) We plot the cost-effectiveness of the two channels over a time horizon of $T=10$ days when they have equal cost ($c_i(u_i)= v_i u_i$ for $i=1,2$ with $v_1=v_2=1$). We then derive the water-level $\hat{\beta}$ for $r=11$ (in green). (B)  The optimal water-level determines the optimal utilization rate of the two channels at different times. As can be seen, the channel with the most reach (channel 1) is prioritized at small $t$, and the one that is most aligned with the likelihood to vote (channel 2) is prioritized late as the election draws near.}}
\label{fig:waterfilling1}
\vspace{-0.2in}
\end{figure}

One important question, especially from a computational point of view, is how tight the upper-bounds on the number of switches are. The most general bound (Theorem \ref{thm:crossing}.2.a) grows with the number of agents in the system, potentially leading to a large computational burden. On the other hand, knowing $\bLam^*(T)$ will allow us to use tighter bounds, like that in Theorem \ref{thm:crossing}.2.b. We simulated 1000 random connected Erdos-Renyi graphs with uniformly random linear objective functions for the case of $\overrightarrow{B}(:,1)=(1, \overrightarrow{0}_{1\times (n-1)})^T$, and plotted the mean, variance, and maximum value of the bound in Theorem \ref{thm:crossing}.2.b as the number of agents was varied. As can be seen in Fig. \ref{fig:bound}, this latter bound is much smaller (around 10 for 200 agents), and its growth with respect to the number of agents is very slow. This is significant since, from an applied perspective, the advertiser {can enumerate and evaluate a much smaller set of candidate optimal solutions, and yet can be reasonably sure that the best such policy is globally optimal.}

\begin{figure}[htb]
\begin{center}
\includegraphics[scale=0.85]{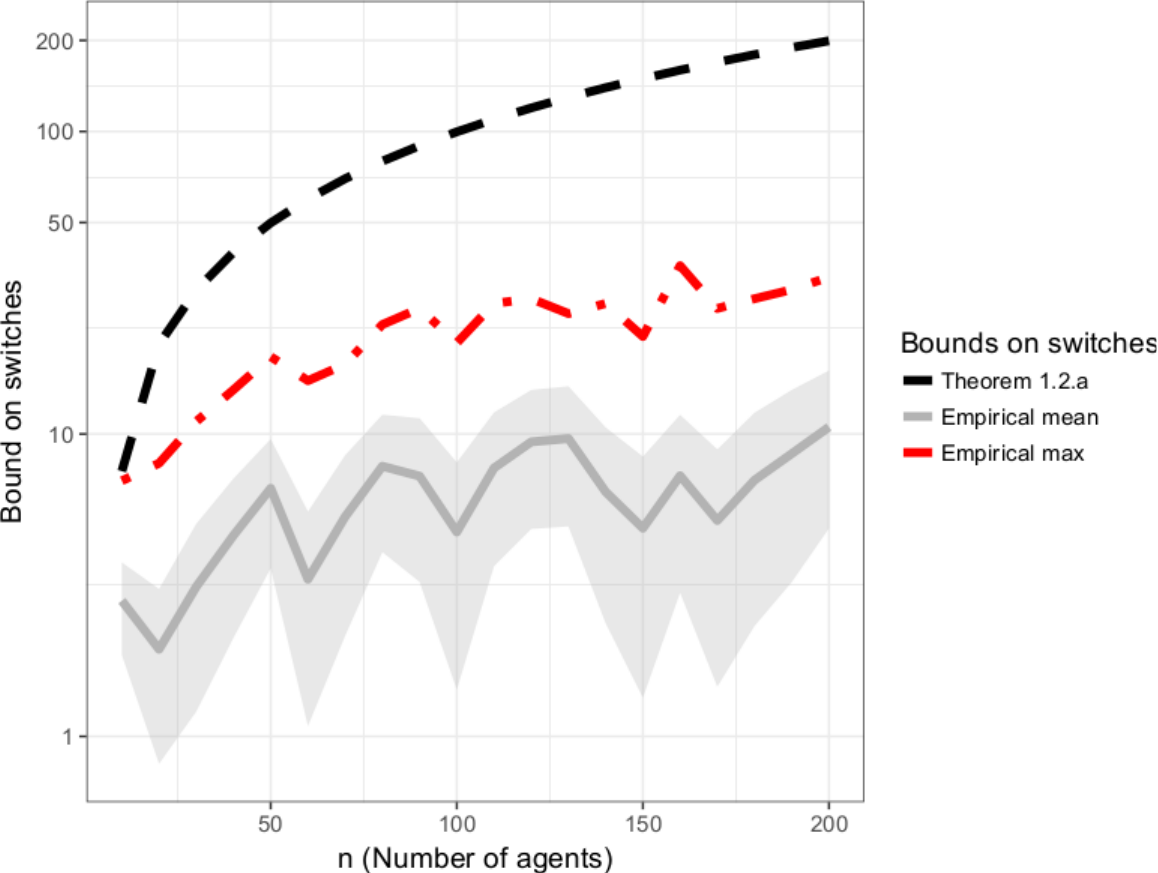}
\end{center}
\vspace{-0.1in}
\caption{\footnotesize{We plot the upper-bounds on the number of switches of the optimal resource allocation derived from Theorem \ref{thm:crossing} for 1000 random Erdos-Renyi graphs for a channel that only affects the first agent as the size of the network is varied. The dashed bound is from Theorem \ref{thm:crossing}.2.a, and can be seen to grow with the number of agents. For a linear objective (Theorem \ref{thm:crossing}.2.b), the grey line (with the related standard deviation band) shows the mean bound on the number of switches, while the dashed red line shows the empirical maximum of the bound over 1000 runs. We can see that both these values are significantly smaller than the bound from Theorem \ref{thm:crossing}.2.a and increase at a much slower rate with the size of the network.}}
\label{fig:bound}
\vspace{-0.1in}
\end{figure}

We now study the performance of our algorithm on a test scenario derived from the MIT Social Evolution data-set \cite{madan2012sensing}. In this data-set, among other data, the political opinions and communication patterns of 84 MIT students are recorded in the period prior to, and following, the 2008 US presidential election. Furthermore, the living sector and year of the students was recorded. 
{We consider the problem of deciding how the campaign of Barack Obama should have invested its resources to disseminate campaign literature in order to guarantee the best electoral outcome.} While this is admittedly a stylized and somewhat simplistic, it adequately demonstrates how the model could be specified and identified.

In particular, we focus on a social network derived from the reported political discussions between students conducted on 2008-09-09 and 2008-10-19, the only two surveys conducted before the November 4th election. We consider ``discussion'' to be an undirected communication between individuals, and thus we aggregate communications that are flagged by both participants. However, we sum distinct communications between two individuals to denote a stronger bond. This process is used to generate the Laplacian communication matrix $\matr{L}$, including by normalizing discussions by time-frame $T=66$ days. The discussion graph is plotted in Fig. \ref{fig:discussions}.

\begin{figure*}[htb]
\begin{center}
\includegraphics[scale=0.34]{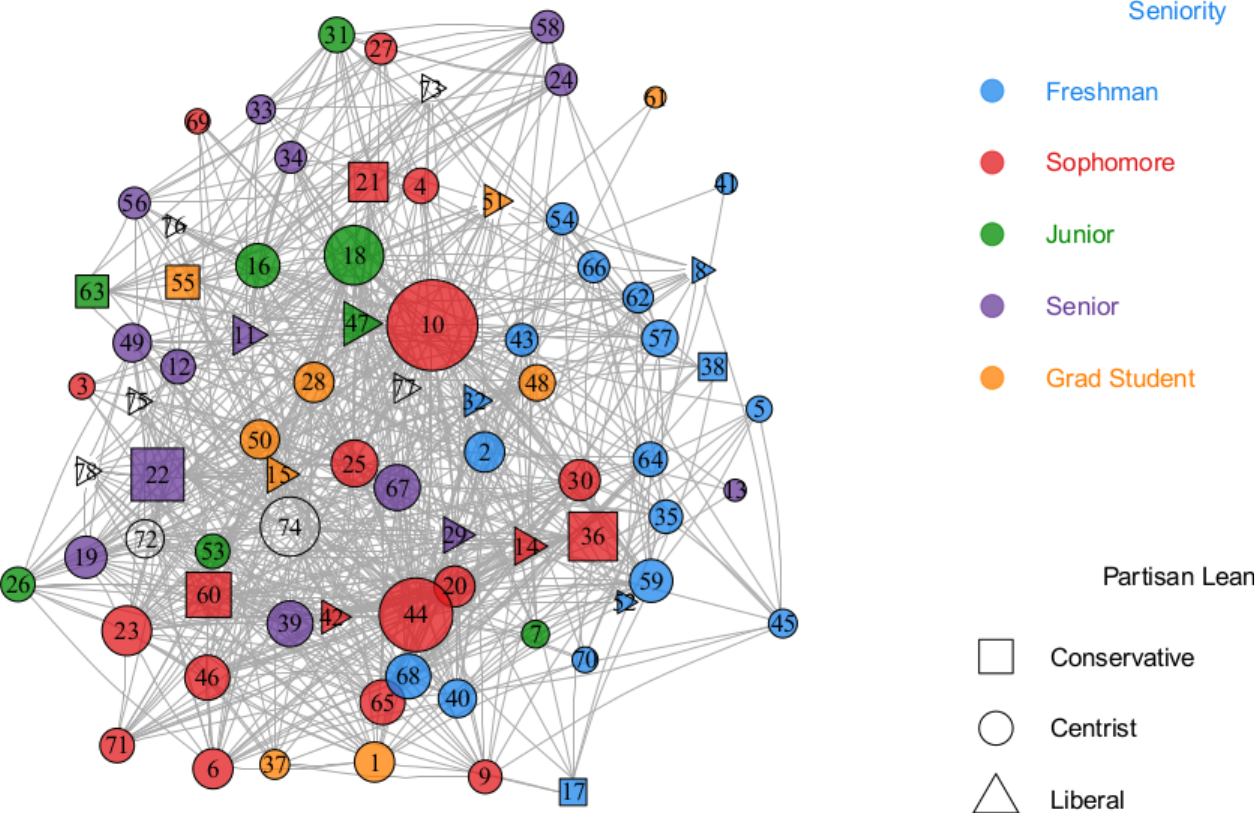}
\hfill
\includegraphics[scale=0.34]{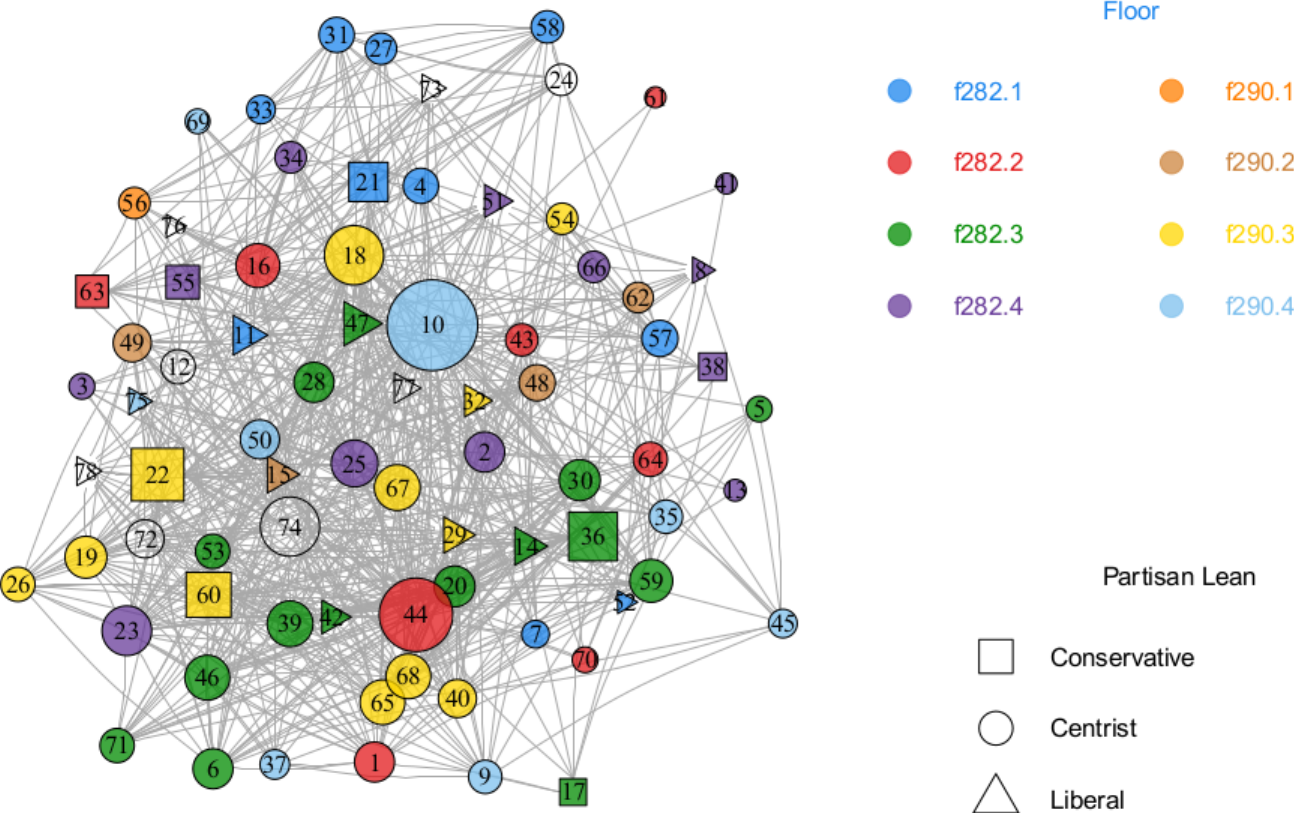}
\\\vspace{0.1in}
\includegraphics[scale=0.34]{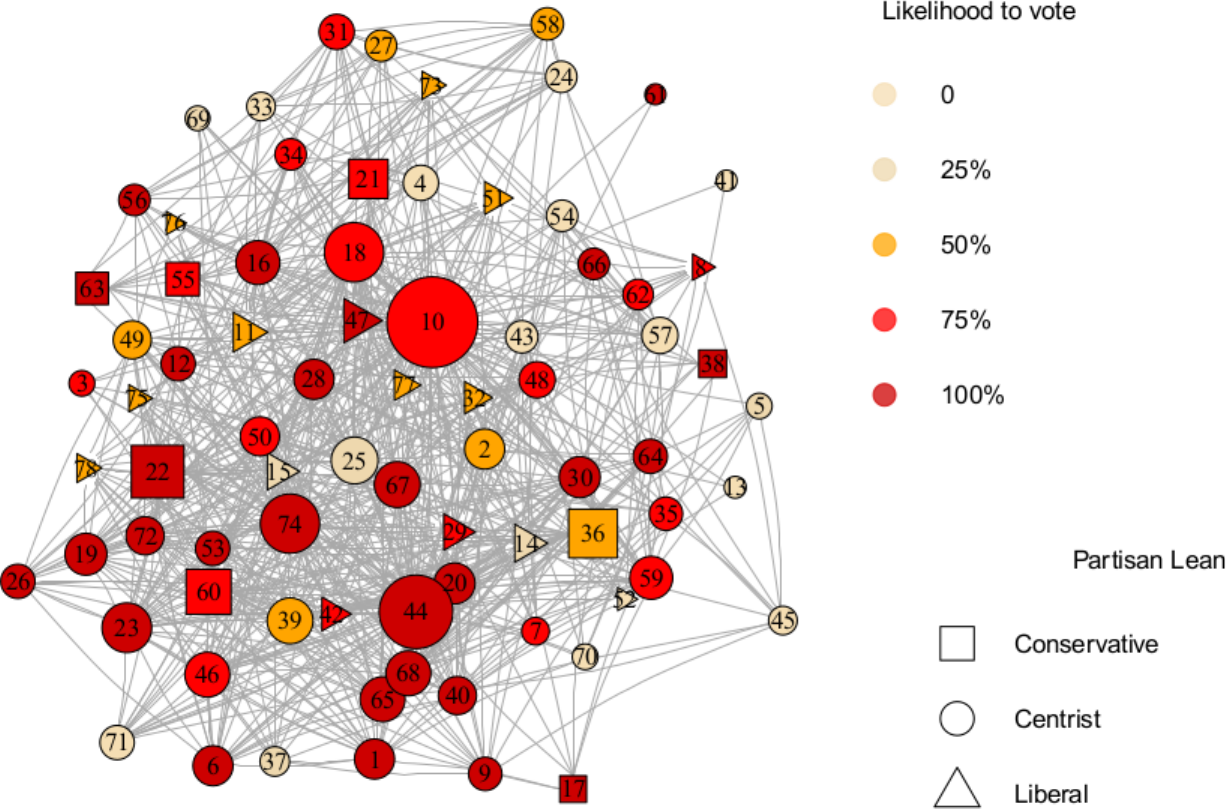}
\hfill
\includegraphics[scale=0.34]{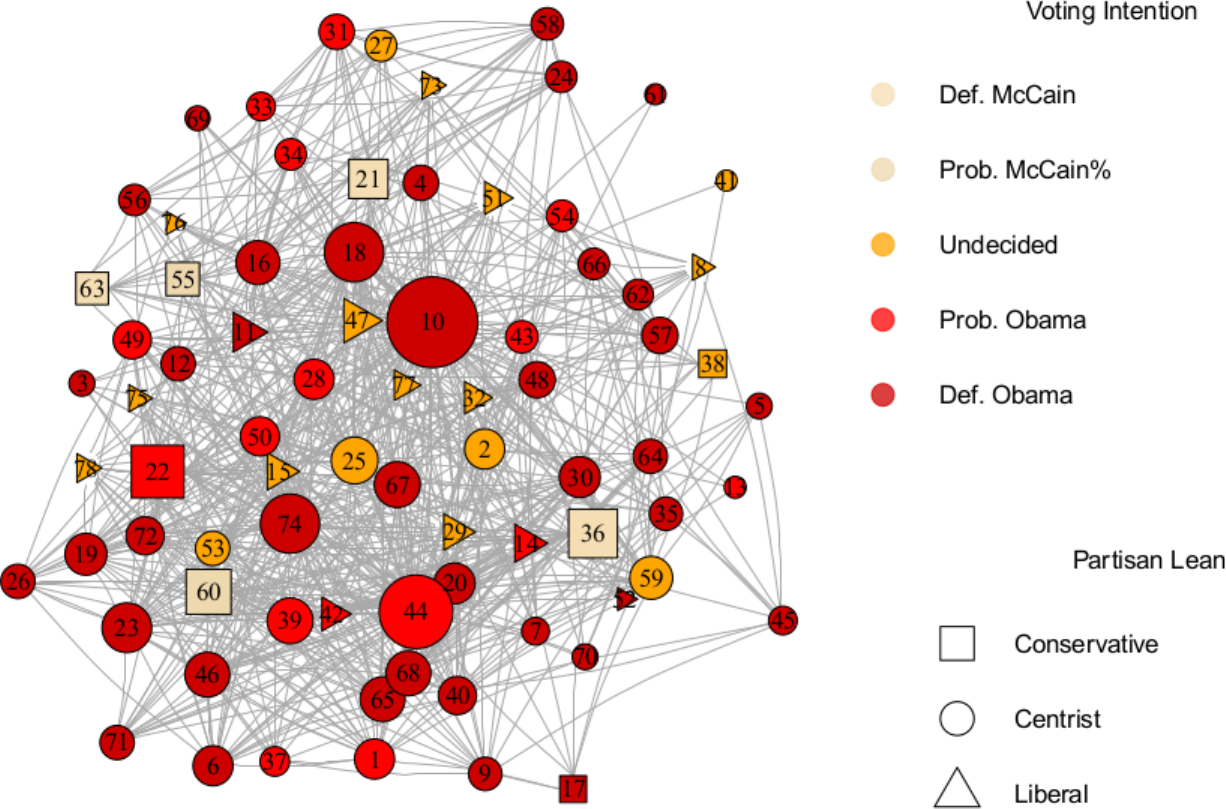}
\end{center}
  \vspace{-1em}
\caption{\footnotesize{In this figure, political discussions between students are mapped as a graph, with the weight of links being derived from the frequency of discussions. In the left-hand graph, nodes are colored according to the seniority of the students, while in the right-hand graph, they are colored in according to their residence, which are the two determinants of advertising channels.}}\label{fig:discussions}
\vspace{-0.2in}
\end{figure*}

We then calculate the channel matrix $\matr{B}$ for channels that represent the 8 dorm floors and 5 seniority levels (freshman, sophomore, junior, senior, graduate), leading to a total $m$ of 13. The channels mapping to dorm floors capture possible advertising on bulletin boards, for example, while the 5 seniority-based channels could represent e-mail lists targeting specific graduating years.
The weight of the effect of each channel is derived from the self-reported liberal or conservative initial bias of the individuals, as the effect of advertising depends on its alignment with the values of the target \cite{atkin1973quality}. In this example, we consider the propagation of campaign literature targeted at liberals, and thus off-putting to conservatives. This can model any of the wedge issues of the campaign (e.g., the Iraq war \cite{cnn2008iraq}). Thus, advertising can have a negative effect on outcomes for the campaign, making some individuals less likely to vote for the candidate. \red{Thus, we assign a non-zero value to $b_{ik}$ if $i$ is in the $k$-th dorm floor/seniority group, with the sign being determined by individual $i$'s self-described ``liberal'' (from the perspective of the Obama campaign, positive) or ``conservative'' (respectively, negative) affiliation, and the magnitude being determined by their self-described strength of identification with that affiliation \footnote{\red{The mapping for $b_{ik}$ within channel $k $ was as follows: ``Extremely conservative'':$-1$, `` Conservative'':$-0.66$, ``Slightly conservative'':$-0.33$, ``Moderate middle of the road'': $0$, ``Slightly liberal'':$0.33$, ``Liberal'':$0.66$,
``Extremely liberal'':$1$}.}}

 Furthermore, and again for simplicity, we assume that all the channels have a similar linear cost, $\overrightarrow{v}=\overrightarrow{1}$, and have similar small effects on the voting intentions of participants $\overrightarrow{u}_{max} = 0.01$. The channels are also shown in Figure \ref{fig:discussions}.

We consider a linear objective for the campaign. While the more complex sigmoid objective functions are a better model for decision-making, we consider the simpler linear case for tractability. We map the self-reported likelihood of voting of participants in September '08 to a $[0,1]$ scale, taking 5 equally spaced values, and constituting the vector $\overrightarrow{p}$.  While self-reported turnout has been shown to be an unreliable predictor of voting behavior \cite{granberg1991self}, we operate under the reasonable assumption that more reliable information is not available to the political 
campaign. These likelihoods of voting can be seen in Fig. \ref{fig:discussions}.

Finally, we instantiate the opinions of individuals $\overrightarrow{x}(0)$ (representing their voting intentions, as viewed by the Obama campaign) with the self-reported voting intention of individuals in September '08, which takes 8 values, mapped to values between $[-1,1]$. Again, for simplicity, we pool third-party voters with undecided voters. A more realistic scenario with vectors of opinions would be able to more accurately capture the diversity in opinions, but would not be as instructive as the current example for the performance of our policy and the resulting centralities. These initial preferences can be seen in Fig. \ref{fig:discussions}.

\begin{figure}[h]
\begin{center}

\includegraphics[scale=0.6]{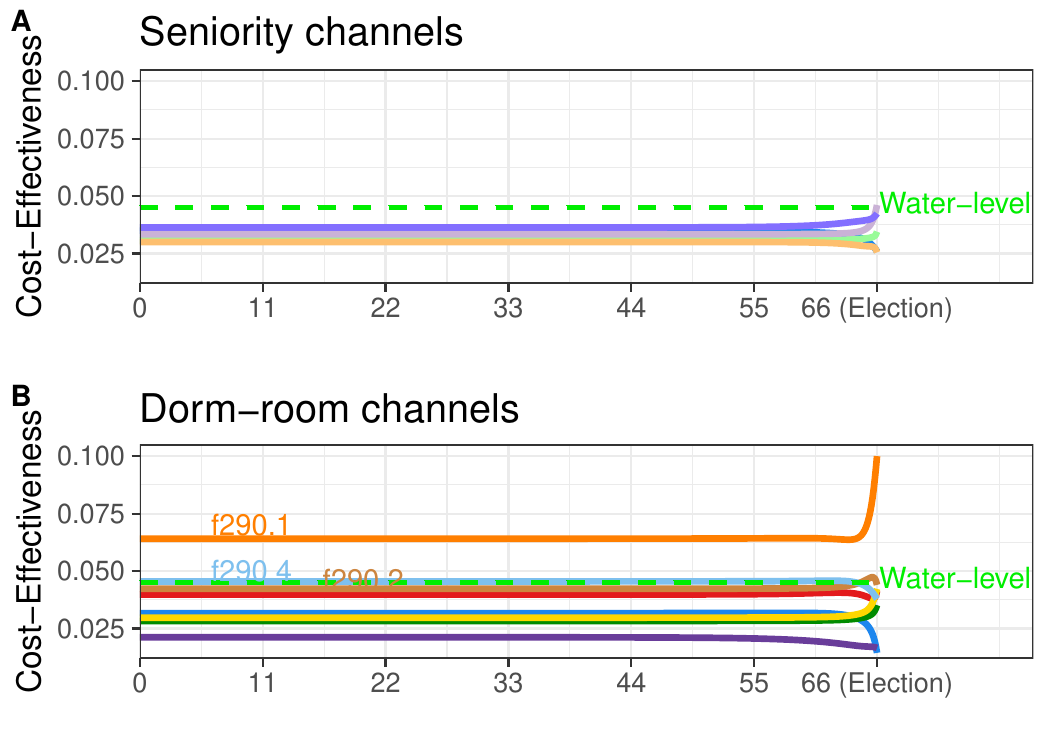}

\includegraphics[scale=0.6]{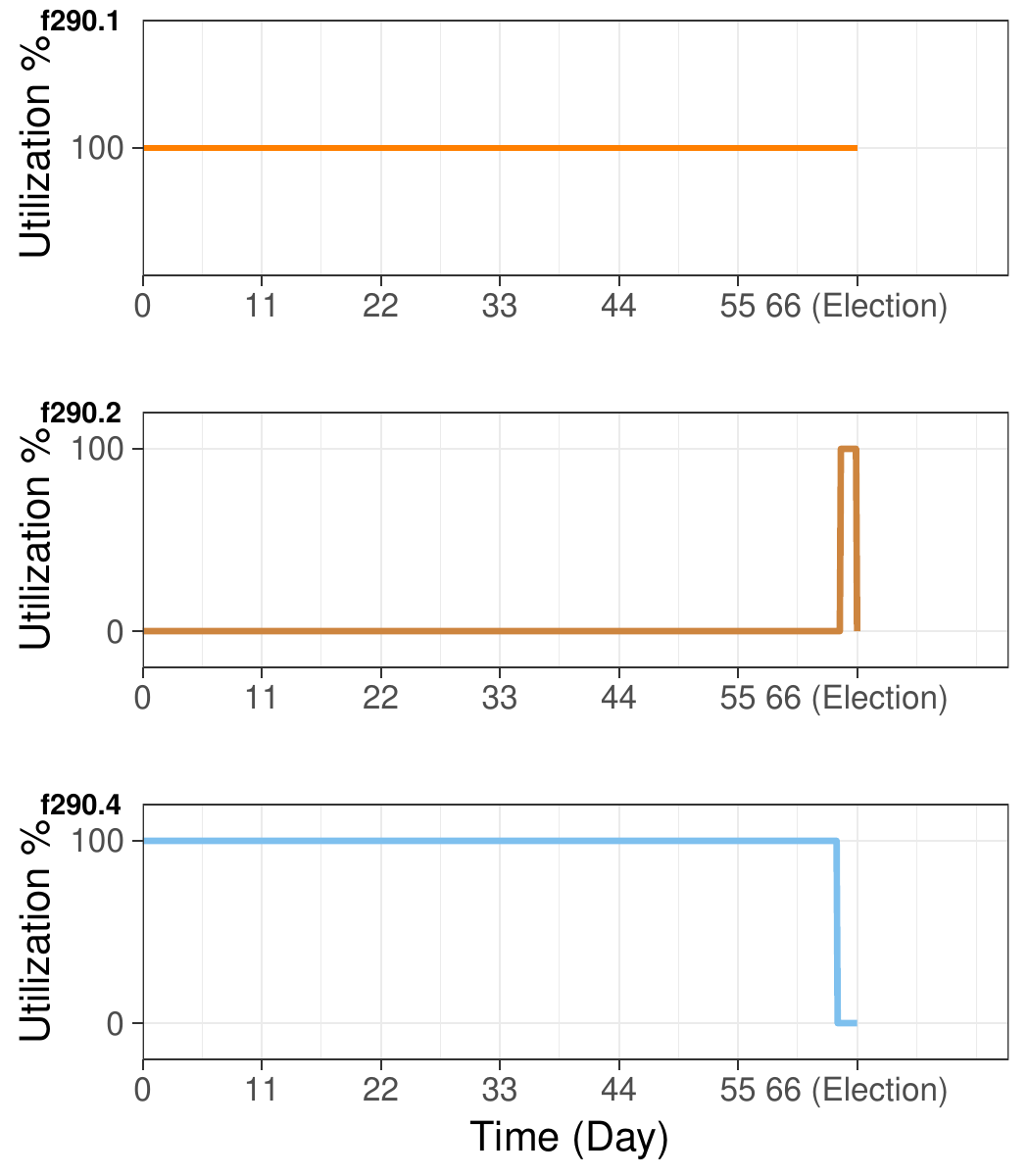}
\end{center}
\vspace{-0.1in}
\caption{\footnotesize{We demonstrate a case with two ${h_i(t)}/{v_i}$ functions for the case of linear $c_i(\cdot)$. Areas above the water-line ($\hat{\beta}(T)$) translate to $u_i(t, \hat{\beta}(T))=u^{max}_i(t)$, while those below translate to $u_i(t, \hat{\beta}(T))=0$. The amount of budget spent for this $\hat{\beta}(T)$ can thus be calculated from the resulting $u_i(t, \hat{\beta}(T))$, and so $\hat{\beta}(T)$ can be adjusted to find $\beta^*(T)$.}}
\label{fig:waterfilling_mit}
\vspace{-0.1in}
\end{figure}

In Fig. \ref{fig:waterfilling_mit}, we show the water-fillling procedure and the resulting optimal utilization of the channels for this problem for a budget of $r=52$. We observe that the optimal budget allocation only uses three of the dorm floor channels, with time-variations in the use of channels f290.2 and f290.4. This is somewhat counter-intuitive given the significantly higher reach of the seniority-based channels, which are unused by the optimal allocation, while dorm floor f290.1, which only includes one solitary individual, is used throughout the time period at the maximum possible rate. However, as the cost of utilizing a channel is taken to be proportional to its reach, f290.1 is utilized because it is very effective \emph{relative to its cost.}  

To benchmark our results, we compared the results of the optimal budget allocation policy on electoral outcomes to policies based on different types of static centralities: between-ness centrality, 
eigen-centrality, 
Page-rank, and degree centrality. For the comparison, we ranked channels according to a channel-weighted sum of the centrality in question (to account for possible negative effects of a channel on an individual), and allocated our budget to the highest-ranked channels at the maximum possible static rate until the exhaustion of our budget ($r=52$). Table \ref{tab:centralities} summarizes the relative differences in outcomes between the optimal dynamic budget allocation policy and the static benchmarks. We see that using our optimal water-filling algorithm based on our novel cost-effectiveness metric leads to a 26\% increase in the expected number of votes compared to the best static policy based on common centrality measures, a significant improvement. 

\red{Note that} \blue{due to our results outlined in Section \ref{subsec:waterfilling},} {the total budget does not affect the relative priority assigned to the channels by either the optimal algorithm (as it does not change the cost-effectiveness of channels) or the heuristics. Rather, it determines how many of the channels with high priorities can be used without going over the budget. The results presented in Table \ref{tab:centralities} are representative in the regime where the budget is a binding constraint in the choice of advertising channels.}

\begin{table}[htb]
\begin{center}
\begin{tabular}{ | c | c | }
\hline
Basis of decision-making & Expected number of votes\\
  \hline
  \hline
  \textbf{Optimal Allocation} & \textbf{39.03}\\
  \hline
Betweenness centrality & 29.86\\
\hline
Eigen-centrality  & 29.16\\
\hline
Page-rank & 29.89\\
\hline
Degree centrality & 30.03\\ \hline  
\end{tabular}
\end{center}
\caption{Expected number of votes gained based on the greedy allocation of budget to the highest-ranked channels according to different types of centralities.}\label{tab:centralities}
\vspace{-2.5em}
\end{table}

\section{Extension to Asymmetric Interaction Weight Matrices $\matr{A}$}\label{sec:directed}

\green{We assumed, in \S \ref{subsec:model} that the interaction matrix $\matr{A}$ was symmetric, i.e., $a_{ij}=a_{ji}$ for all pairs $i, j \in [n]$.
However, asymmetric interactions can also be considered in our framework. We consider the weighted digraph $G=(V,E,\matr{A})$, where $(i,j)\in E$ represents a link from $j$ to $i$ (i.e., the associated weight $a_{ij}$ is the weight agent $i$ assigns to the opinion of agent $j$ in its opinion update).

We first need a definition akin to our assumption of connectedness for the undirected graph:

\begin{Definition}
A \emph{quasi-strongly connected (QSC)} digraph is a digraph that contains a rooted out-branching (i.e., a directed spanning tree) as a sub-graph \cite[p. 51, Definition 3.7]{mesbahi2010graph}, \cite[Definition 6]{proskurnikov2017tutorial}
\end{Definition}

We know, from Gershgorin's Disc Theorem \cite[p. 357, Theorem 7.2.1]{golub2012matrix}, that the eigenvalues of an asymmetric Laplacian matrix $\matr{L}$ will have non-negative real parts. Note also that due to the row-sums being equal to zero, the smallest eigenvalue  of $\matr{L}$ is always \textbf{zero}, its multiplicity is at least one , and one of its associated eigenvectors is $\frac{1}{\sqrt{n}}\overrightarrow{1}_{n}$. For quasi-strongly connected digraphs we know that this eigenvalue is simple \cite[p. 51, Proposition 3.8]{mesbahi2010graph}, \cite[Lemma 8]{proskurnikov2017tutorial}.

As mentioned in Remark \ref{rem:asymmetric}, \emph{Lemma \ref{lem:existence} and Theorem \ref{thm:crossing}.\ref{thm:first} carry over, with the number of switches being bounded above by $n-1$, when the interaction matrix is asymmetric, under the conditions that:}
\begin{enumerate}
\item the digraph $G$ is quasi-strongly connected (QSC).

\item all the eigenvalues of the asymmetric Laplacian $\matr{L}$ are strictly real;

\item $c_i(\cdot)$ is linear.

\end{enumerate}
Furthermore, the arguments around water-filling presented in \S \ref{subsec:waterfilling} also follow, with a different definition of the function $h_i(t)$ for all $i \in [m]$. 

\begin{Remark}
Note that every weighted Directed Acyclic Graph (DAG) has a Laplacian with real eigenvalues. 
\end{Remark}

\begin{proof}
Every DAG $G=(V,E)$ has a topological ordering, in which its vertices are ordered such that for all $(u,v)\in E$, $u$ appears before $v$ in the topological order.\cite[p. 151]{gross2013handbook} Given this ordering, the asymmetric Laplacian $\matr{L}$ will be upper-triangular, and thus the elements of its diagonal, which are real, will be its eigenvalues.
\end{proof}

Here, we document the changes in the proof process for the case of asymmetric interactions.

\subsection{Changes to \S \ref{subsec:pre}} 
We assume that the underlying digraph $G$ is quasi-strongly connected (QSC).
We now also consider the case where the geometric multiplicities of the eigenvalues may be less than their algebraic multiplicities. By our assumption, all these eigenvalues are real, and thus so are the associated simple and generalized eigenvectors.\footnote{This is because the generalized eigenvectors associated with eigenvalue $\xi_i$ create a basis for the null-space of $(\matr{L}-\xi_i \matr{I})$ in $\mathbb{R}^n$.} Furthermore, the set of (generalized) eigenvectors can be chosen to be linearly independent, creating a canonical basis for $\mathbb{R}^n$. Also let $\matr{Q}$ be a matrix whose columns are these generalized eigenvectors of $\matr{L}$, with the generalized eigenvectors corresponding to smaller eigenvalues appearing first, those corresponding to the same eigenvalue and generated from the same Jordan chain appearing one after the other in order of the chain (starting with the simple eigenvector), and chains corresponding to the same eigenvalue being ordered according to the size of the chain (largest first). Then, we will have $\matr{L}=\matr{Q} \matr{\Xi} \matr{Q}^{-1}$, where now $\matr{\Xi}$ is the upper-triangular Jordan normal form of $\matr{L}$, with real, non-negative eigenvalues on the diagonal, with the eigenvalues appearing from the smallest to the largest. Let $\mu\leq n$ be the total number of Jordan blocks in $\matr{\Xi}$; let $\matr{\Xi_i}$ be the $i-$th Jordan block, with size $|\Xi_i|$ and corresponding to eigenvalue $\xi_i$, for $i \in [\nu]$. Note that $\sum_{i=1}^\nu|\Xi_i|=n$; we also set $|X_0|=0$. While not all $\xi_i$ may be distinct, yet we have $\xi_i \leq \xi_{i+1}$ for $i \in [\nu-1]$ according to our construction.
Note also that $\matr{Q}(:,1)=\frac{1}{\sqrt{n}}\overrightarrow{1}_{n}$ for the simple $0$ eigenvalue. 

\subsection{Changes to Lemma \ref{lem:existence}} 

The statement and proof of Lemma \ref{lem:existence} do not change for this case.

\subsection{Changes to the proof of Theorem \ref{thm:crossing} } 

 Nothing changes until the arguments around the zeros of $\inp*{\overrightarrow{\bLam^*}(t)}{\overrightarrow{B}(:,i)}=\text{constant}$ in \S Appendix \ref{subsec:proof1}. 


Now, we define $\overrightarrow{y}(t):=\matr{Q}^{-1}\overrightarrow{\lambda^*}(t)$, and we will again have:
\begin{align*}
\dot{\overrightarrow{y}}&=  \matr{\Xi} \overrightarrow{y}.
\end{align*}
However, now due to the Jordan normal form structure of $\matr{\Xi}$, the solution of the ODE will change; for all $i\in [n]$ and $k\in[\nu]\Cup \{0\}$ such that $\sum_{j=0}^k |\Xi_j|<i\leq \sum_{j=0}^{k+1} |\Xi_j|$, first let $\ell_i:= i - \sum_{j=0}^k |\Xi_j|$ and $k_i=k$. Then, we will have:  
\begin{align*}
y_i(t)&= \big(\sum_{j=0}^{\ell_i} \frac{t^j}{j!} y_{i-j} (T)\big) e^{\xi_{k_i}(t-T)}= \sum_{j=0}^{\ell_i} \frac{t^j}{j!}\inp*{\overrightarrow{Q}^{-1}(i-j,:)}{\overrightarrow{\bLam^*}(T)}e^{-\xi_{k_i}(T-t)}.
\end{align*}
Therefore:
\begin{align*}
\overrightarrow{\bLam^*}(t)& = \matr{Q}\overrightarrow{y}= \matr{Q}\bigg[\sum_{j=0}^{\ell_i} \frac{t^j}{j!}\inp*{\overrightarrow{Q}^{-1}(i-j,:)}{\overrightarrow{\bLam^*}(T)}e^{-\xi_{k_i}(T-t)}\bigg]_{i} = \sum_{i=1}^n \bigg[\sum_{j=0}^{\ell_i} \frac{t^j}{j!}\inp*{\overrightarrow{Q}^{-1}(i-j,:)}{\overrightarrow{\bLam^*}(T)}e^{-\xi_{k_i}(T-t)}\bigg] \overrightarrow{Q}(:,i).
\end{align*}

And:
\begin{align}\label{eq:equation8}
\inp*{\overrightarrow{\bLam^*}(t)}{\overrightarrow{B}(:,i)}&= \sum_{j=1}^n \bigg[
\sum_{\varrho=0}^{\ell_j} \frac{t^\varrho}{\varrho!}\inp*{\overrightarrow{Q}^{-1}(j-\varrho,:)}{\overrightarrow{\bLam^*}(T)}e^{-\xi_{k_j}(T-t)}
\bigg] \bigg[\inp*{\overrightarrow{Q}(:,j)}{\overrightarrow{B}(:,i)}\bigg]\notag\\&=
 \sum_{j=1}^n \bigg[\inp*{\overrightarrow{Q}(:,j)}{\overrightarrow{B}(:,i)} \sum_{\varrho=0}^{\ell_j} \frac{t^\varrho}{\varrho!}\inp*{\overrightarrow{Q}^{-1}(j-\varrho,:)}{\overrightarrow{\bLam^*}(T)}\bigg]e^{-\xi_{k_j}(T-t)}=s.
\end{align}
Given that the smallest eigenvalue ($0$) is simple and associated with eigenvector $\frac{1}{\sqrt{n}}\overrightarrow{1}_{n}$, this is equivalent to:
 \begin{align}\label{eq:equation9}
      \sum_{j=2}^n \bigg[\inp*{\overrightarrow{Q}(:,j)}{\overrightarrow{B}(:,i)} \sum_{\varrho=0}^{\ell_j} \frac{t^\varrho}{\varrho!}\inp*{\overrightarrow{Q}^{-1}(j-\varrho,:)}{\overrightarrow{\bLam^*}(T)}\bigg]e^{-\xi_{k_j}(T-t)}+(\bigg[\sum_{h=1}^n b_{hi} \inp*{\overrightarrow{Q}^{-1}(1,:)}{\overrightarrow{\bLam^*}(T)}\bigg]-s)=0.
\end{align}
To find the zeros of \eqref{eq:equation9}, we again reason based on whether any of the coefficients of the exponentials in \eqref{eq:equation9} are non-zero or not.

\textbf{I)} In the case where for some $2\leq \iota \leq \nu$, \[\sum_{j\in \matr{\Xi_\iota}} \bigg[\inp*{\overrightarrow{Q}(:,j)}{\overrightarrow{B}(:,i)} \sum_{\varrho=0}^{\ell_j} \frac{t^\varrho}{\varrho!}\inp*{\overrightarrow{Q}^{-1}(j-\varrho,:)}{\overrightarrow{\bLam^*}(T)}\bigg]\neq 0,\] we appeal to a generalized form of Lemma \ref{lem:n-1} to complete the proof that \eqref{eq:equation9} has at most $n-1$ roots:

\begin{Lemma}\label{lem:N-1}
For $K>1$ and $M\leq K$, let $\eta_1>\eta_2>\cdots>\eta_M>0$. Also, for $i= 1,\cdots, M,$ and $j =1,\cdots,w_i$ such that $\sum_{i=1}^M w_i = K$, let $d_{i,j}$ be a real number. Then, if in the function $f : \mathbb{R} \to \mathbb{R}$, $f(t) = \sum_{i=1}^{M}\sum_{j=0}^{w_{i}-1} d_{i,j} t^j\eta_i^{t}$, we have $d_{i,{w_{i}-1}}\neq 0$ for all $i$, $f(t)$ has at most $K-1$ zeros.
\end{Lemma}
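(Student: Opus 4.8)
The plan is to prove Lemma \ref{lem:N-1} by induction on $K = \sum_{i=1}^M w_i$, the total number of generalized modes, using the classical fact that a real-valued $C^1$ function whose derivative has at most $p$ zeros can itself have at most $p+1$ zeros (Rolle's theorem), combined with a multiplicative rescaling trick to knock down the leading exponential. The base case $K=1$ forces $M=1$, $w_1=1$, so $f(t)=d_{1,0}\eta_1^t$ with $d_{1,0}\neq 0$, which has no zeros, consistent with $K-1=0$.

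For the inductive step, suppose the claim holds for all totals less than $K$. Given $f(t)=\sum_{i=1}^M \sum_{j=0}^{w_i-1} d_{i,j}t^j \eta_i^t$ with $\eta_1>\eta_2>\cdots>\eta_M>0$ and $d_{i,w_i-1}\neq 0$ for every $i$, consider $g(t) := \eta_M^{-t} f(t) = \sum_{i=1}^M \sum_{j=0}^{w_i-1} d_{i,j} t^j (\eta_i/\eta_M)^t$. Since $\eta_M^{-t}>0$, $g$ has exactly the same zeros as $f$, so it suffices to bound the zeros of $g$. Now differentiate $g$ a total of $w_M$ times. In the last block ($i=M$), the factor $(\eta_M/\eta_M)^t \equiv 1$, so that block is just the polynomial $\sum_{j=0}^{w_M-1} d_{M,j} t^j$ of degree exactly $w_M-1$ (leading coefficient nonzero), which is annihilated by $d^{w_M}/dt^{w_M}$. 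For each other block $i<M$, write $\rho_i := \eta_i/\eta_M$, which satisfies $\rho_i \neq 1$ (indeed $\rho_1>\cdots>\rho_{M-1}>1$), and each term $d_{i,j}t^j \rho_i^t$; applying $d^{w_M}/dt^{w_M}$ to $t^j\rho_i^t$ yields, via the Leibniz rule, $\rho_i^t$ times a polynomial in $t$ whose degree is still exactly $j$ and whose leading coefficient is $d_{i,j}(\ln\rho_i)^{w_M}$ (the top-degree term comes only from differentiating the exponential $w_M$ times and leaving $t^j$ intact). Since $\rho_i>1$ we have $\ln\rho_i\neq 0$, so the leading coefficient of the degree-$(w_i-1)$ part of block $i$ in $g^{(w_M)}$ is $d_{i,w_i-1}(\ln\rho_i)^{w_M}\neq 0$. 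Hence $g^{(w_M)}(t) = \sum_{i=1}^{M-1}\bigl(\tilde d_{i,w_i-1} t^{w_i-1} + \text{lower order}\bigr)\rho_i^t$, which after multiplying by $\eta_M^t$ again (to restore the exponential bases to $\eta_1,\dots,\eta_{M-1}$, all distinct and positive) is of exactly the form covered by the inductive hypothesis with $M-1$ distinct rates and total order $K' = \sum_{i=1}^{M-1} w_i = K - w_M$. By induction $g^{(w_M)}$ has at most $K - w_M - 1$ zeros. Then applying Rolle's theorem $w_M$ times, $g$ has at most $(K-w_M-1)+w_M = K-1$ zeros, and so does $f$. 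This closes the induction.

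The main obstacle I anticipate is the bookkeeping in the inductive step: one must verify carefully that after $w_M$ differentiations the $i$-th block (for $i<M$) still has a nonzero top-degree coefficient, which is precisely where the hypothesis $\eta_i \neq \eta_M$ (so $\ln\rho_i \neq 0$) is essential --- and also where the hypothesis $d_{i,w_i-1}\neq 0$ propagates correctly. One subtlety is the case $M=1$, $w_1 = K > 1$: then there are no residual blocks after the differentiation trick, and instead one simply notes $f(t) = \eta_1^t\sum_{j=0}^{K-1}d_{1,j}t^j$ has zeros exactly at the zeros of a degree-$(K-1)$ polynomial, hence at most $K-1$ of them; this can be folded in as a separate (easy) case or absorbed by defining the empty sum as $0$. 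A second point worth stating explicitly is why we may assume $\eta_M$ is the smallest rate and peel from the bottom: any choice works, but peeling the smallest keeps all the ratios $\rho_i>1$ so the logarithms are manifestly nonzero; alternatively one could peel the largest and get $\rho_i<1$, still nonzero. I would also remark up front that strict positivity of the $\eta_i$ is what lets us divide by $\eta_M^t$ freely without introducing or destroying zeros.
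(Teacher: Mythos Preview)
Your proposal is correct and follows essentially the same route as the paper: factor out the smallest exponential $\eta_M^t$, differentiate $w_M$ times to annihilate that block while preserving the nonzero leading coefficients of the remaining blocks (via the $(\ln\rho_i)^{w_M}$ factor), apply the induction hypothesis, and then climb back via Rolle's theorem. The only cosmetic difference is that you induct on the total $K$ (with strong induction) whereas the paper inducts on the number $M$ of distinct rates; both reductions peel off exactly one exponential block per step, so the arguments are structurally identical.
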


 The proof of this lemma is provided in \S Appendix \ref{sec:lemma_gen}. Therefore, the number of roots of \eqref{eq:equation9} is bounded above by the number of non-zero elements in $\big\{\sum_{j\in \matr{\Xi_\iota}} \inp*{\overrightarrow{Q}(:,j)}{\overrightarrow{B}(:,i)} \sum_{\varrho=0}^{\ell_j} \frac{t^\varrho}{\varrho!}\inp*{\overrightarrow{Q}^{-1}(j-\varrho,:)}{\overrightarrow{\bLam^*}(T)}\big\}_{\iota=2}^\nu$, which is itself bounded above by $n-1$.

 \textbf{II)} This again leaves the case where for  all $2 \leq \iota \leq \nu$, \[\sum_{j\in \matr{\Xi_\iota}} \bigg[\inp*{\overrightarrow{Q}(:,j)}{\overrightarrow{B}(:,i)} \sum_{\varrho=0}^{\ell_j} \frac{t^\varrho}{\varrho!}\inp*{\overrightarrow{Q}^{-1}(j-\varrho,:)}{\overrightarrow{\bLam^*}(T)}\bigg]\neq 0.\] 

\textbf{II-1)} If $\sum_{h=1}^n b_{hi} \inp*{\overrightarrow{Q}^{-1}(1,:)}{\overrightarrow{\bLam^*}(T)}\neq s$, then \eqref{eq:equation9} has no root and the claim holds.

\textbf{II-2)} Else the rest of the proof is the same as in \S Appendix \ref{sec:lemma2}, part II-2, for the linear $c_i(\cdot)$ case.

\subsection{Changes to \S \ref{subsec:waterfilling}}
Naturally, from the arguments above, Proposition \ref{thm:linear} and the arguments around the water-filling approach carry over for the case of linear $c_i(\cdot)$ and separable linear $J(\cdot)$, under the condition that \eqref{eq:weights} is replaced with:
\begin{align}\label{eq:weights2} h_i(t)&= \sum_{j=1}^n \bigg[\inp*{\overrightarrow{Q}(:,j)}{\overrightarrow{B}(:,i)} \sum_{\varrho=0}^{\ell_j} \frac{t^\varrho}{\varrho!}\inp*{\overrightarrow{Q}^{-1}(j-\varrho,:)}{\overrightarrow{p}(T)}\bigg]e^{-\xi_{k_j}(T-t)}
\end{align}

}
\section{Summary and Discussion}
We consider the problem of optimally allocating a finite budget over time across several advertising channels. We showed, using Pontryagin's maximum principle, that the optimal allocation follows a bang-bang structure, in which we either invest fully in a channel or not at all. In other words, to maximize the effectiveness of our budget, we should invest fully over a number of waves, and let the effect of the waves propagate in between waves. Furthermore, we show that the number of advertising waves during which we invest fully is, in practice, much smaller than the number of agents in the network. This result greatly facilitates the explicit computation of the optimal allocation policy over time. 
 Furthermore, we showed that the exact optimal control can be calculated using an efficient water-filling procedure for a linear objective. 
From this water-filling procedure, we rigorously defined ``cost-effectiveness'' as a metric for ranking and comparing the influence of different channels at differing times on outcomes. Finally, applying our results to the sigmoid approximation of the electoral campaign/voting model model confirmed the intuitive notion that identifying last-deciders determines the campaign strategy.

These results can be generalized in various ways. The notion of channel interaction in this work did not come with any constraints on the presence or attention of the channel members. Adding such a constraint can more clearly model real-world interactions. Furthermore, the linear model of influence is also a constraint that may be relaxed to obtain more general structures on influence control. Finally, this work looked at a single issue where each agents opinion was represented with a scalar - the same methodology can be extended to find optimal advertising strategies with vectors of opinions.

\ifCLASSOPTIONcompsoc
  \section*{Acknowledgments}
\else
  \section*{Acknowledgment}
\fi

The work of Eshghi, Zhao, D'Souza, and Swami was supported, in part, by the Army Research Laboratory Network
Science CTA under Cooperative Agreement W911NF-09-2-0053.
The work of Preciado was supported, in part, by the US National Science Foundation under grant CAREER-ECCS-1651433.

\ifCLASSOPTIONcaptionsoff
  \newpage
\fi

\bibliographystyle{IEEEtran}
{ \bibliography{biblio_2}}

\appendices

\section{Proof of Lemma \ref{lem:existence}}\label{sec:lemmaexist}
\begin{proof}
{The solutions to the dynamics ODEs \eqref{eq:vecdynamics}\rdout{,} \red{and} \eqref{eq:newstates}\rdout{, and \eqref{eq:newstates1}} exist for all $\overrightarrow{u}\in \mathcal{U}$ as the RHS terms are locally Lipschitz. We also have that for every $(t, \overrightarrow{x}(t), \gamma(t){, q(t)})$, the set:
\begin{align*}
\mathcal{P}:=\bigg\{\big((- \matr{L}\overrightarrow{x}(t) + &\matr{B}\overrightarrow{u}(t) + \overrightarrow{e}(t))^T, - \sum_{k=1}^m  c_k(u_k(t)),1\big)\bigg|\overrightarrow{u}_{m*1}(t): 0\leq u_k (t) \leq u_k^{\max}(t), k \in [m]\bigg\}
\end{align*}
 is compact, as the domain is compact and the functions are continuous. $\mathcal{P}$ is also convex, as in the first $n$ dimensions, it is mapped linearly from a convex set, in the $n+1$-th dimension it is also a linear mapping from a convex set (for linear $c_i(\cdot)$), and it is constant in the $n+2$-th dimension.
 Thus, according to Filippov's theorem \cite[page 119]{liberzon2012calculus}, the reachable set $R^{T}(\overrightarrow{x_0}, 0\mathstr{, 0}):=\{(\overrightarrow{x}(T), \gamma(T){, q(T)}): \overrightarrow{x}(0)= \overrightarrow{x_0}, \gamma(0)=0{, q(0)=0}, u\in \mathcal{U}\}$ from $(\overrightarrow{x_0}, 0{, 0})$ is compact and convex. The reachable set will still be compact and convex if we restrict it by only looking at controls that lead to $\gamma(T)\geq-r$ (as it is an intersection of two convex sets).\footnote{This set is non-empty because $\overrightarrow{u}(t)=\overrightarrow{0}$ for all $t$ is a member.} By the Weierstrass theorem \cite[page 7]{liberzon2012calculus}, the continuous function $J(\overrightarrow{x}(T))$ will have a global maximum on this compact set. Thus, by the definition of the reachable set, there exists a control that steers the state to this global maximum, and all such controls will thus be optimal.}
\end{proof}

\section{Proof of Theorem \ref{thm:crossing}}\label{subsec:proof1}

We define {\em co-state} variables corresponding to each of the state variables in \eqref{prob:reformulated}, as summarized in Table \ref{transtable}.
\begin{table}[htb]
\begin{center}
\begin{tabular}{ | c | c | c | c|}
  \hline
  state variable & $\overrightarrow{x}$ &  $\gamma$ &  {$q$} \\
  \hline			
  co-state variable &  $\overrightarrow{\bLam}$ & $\beta$  & {$z$} \\
  \hline  
\end{tabular}
\end{center}
\caption{Correspondence between the state variables and their respective co-state variables.}\label{transtable}
\end{table}
For this new system of equations, we define the Hamiltonian as:
\begin{align}\label{eq:hamiltonian}
\ham(&\overrightarrow{x}(t), \gamma, \mathstr{q,} \overrightarrow{\bLam}, \beta, \mathstr{z,} \overrightarrow{u}(t) \red{, t}) = - \overrightarrow{\bLam}^T \matr{L} \overrightarrow{x} + \overrightarrow{\bLam}^T\matr{B}\overrightarrow{u}+\overrightarrow{\bLam}^T \overrightarrow{e}({q}) - {\beta} \sum_{k=1}^{m}c_k(u_k) {+ z} ,
\end{align}
 where for the continuous {\em co-state} functions we have (at points of continuity of the controls):
\begin{align}
\dot{\overrightarrow{\bLam}}(t) &= -\dfrac{d\ham}{d\overrightarrow{x}} = \matr{L}^T \overrightarrow{\bLam}(t), ~~
\dot{{\beta}}(t) = -\dfrac{d\ham}{d{\gamma}} = 0,\label{eq:costates1}\\
{\dot{z}(t)}&={-\dfrac{d\ham}{d{q}}=-\inp*{\dfrac{d\overrightarrow{e}(q)}{dq}}{\overrightarrow{\bLam}},}\label{eq:costates2}
\end{align}
with terminal state constraints:
\begin{align}
\hspace*{-0.2in}\overrightarrow{\bLam}(T) &= \dfrac{\lambda_0\partial J(\overrightarrow{x}{(T)})}{\partial \overrightarrow{x}(T)}\geq 0, ~ {z(T)= \dfrac{\lambda_0\partial J(\overrightarrow{x}{(T)})}{\partial z(T)}=0,}\label{eq:terminal}\\
\beta(T)&\geq 0,~ \qquad\qquad\qquad \beta(T)[\gamma(T)+r]=0,\label{eq:terminal2}
\end{align}
with $\lambda_0 \in \{0, 1\}$.

For the case of the sigmoid cost function in \eqref{obj:sigmoid}, this means
\begin{align}\label{eq:final_sigmoid}
\lambda_i(T) = \dfrac{\lambda_0\alpha e^{-\alpha (x_i(T) - \theta_i)}}{\big(1+e^{-\alpha_i (x_i(T) - \theta_i)}\big)^2},
\end{align}
while for the linear cost function \eqref{obj:linear}, we will have:
\begin{align}\label{eq:final_linear}
\lambda_i(T) = \lambda_0 p_i.
\end{align}

Pontryagin's Maximum Principle (PMP) \cite[page 182]{seierstad1986optimal} gives us the following necessary conditions for an optimal control \footnote{\green{These necessary conditions are stated for an \emph{autonomous} system, as the dynamics and thus \eqref{eq:hamiltonian} do not depend explicitly on time.}}:

 If: 
\begin{compactitem}
\item $\overrightarrow{u^*}\in \mathcal{U}$ is the piecewise continuous optimal control,
\item $\overrightarrow{x^*}, \gamma^*{, q^*}$ are state trajectories to \eqref{prob:reformulated} under the optimal control,
\item $\overrightarrow{\bLam^*}, {\beta^*}{, z^*}$ are continuous \emph{co-state} real-valued functions on $[0,T]$ that satisfy the terminal time constraints \eqref{eq:terminal} and \eqref{eq:terminal2}, and
dynamics \eqref{eq:costates1} and \eqref{eq:costates2} at points of continuity of the controls,
\end{compactitem}
 then for any control $\overrightarrow{u}\in \mathcal{U}$ that respects \eqref{eq:influencecapital}, 
the following holds: 
\begin{enumerate}
\item For all $0\leq t\leq T$:
\begin{align}\label{eq:hamiltonianmax}
\ham(\overrightarrow{x^*}(t), \gamma^*,  &\mathstr{q^*,} \overrightarrow{\bLam^*}, \beta^*, {z^*,} \overrightarrow{u^*}(t))\geq 
\ham(\overrightarrow{x^*}(t), \gamma^*, \mathstr{q^*,} \overrightarrow{\bLam}^*, \beta^*, {z^*,} \overrightarrow{u}(t)),
\end{align}

\item $(\lambda_0, \overrightarrow{\bLam^*}(t), \beta^*(t){, z^*(t)})\neq \overrightarrow{0}_{n+{3}}$ for all $0\leq t\leq T$.
\end{enumerate}
Since $\dot{\beta^*}(t)=0$ by  \eqref{eq:costates1} for all times at which the controls are continuous, and also due to continuity of the co-states, we obtain:
\begin{align}\label{eqref:betacondition}
 \beta^*(t)=\beta^*(T),~~~~~\forall {t \in [0,T]}.
 \end{align}

For $i\in [m]$, we will use $\varphi_i(t)$ to denote the part of the Hamiltonian \eqref{eq:hamiltonian} that depends explicitly on $u_i$. Therefore for all $i\in [m]$ (using \eqref{eqref:betacondition}):
\begin{align}\label{eq:explicitdependence}
\varphi_{i}(u_i,t):= \inp*{\overrightarrow{\bLam^*}(t)}{\overrightarrow{B}(:,i)} u_i - \beta^*(T) c_i(u_i).
\end{align}

The point-wise Hamiltonian maximizing condition \eqref{eq:hamiltonianmax} of the PMP leads to the fact that $\varphi_{i}(u^*_i,t)\geq \varphi_{i}(u_i,t)$ for all $t$ and all $u_i\in [0, u_i^{\max}(t)]$.
Due to the differentiability of the co-states, $\varphi_i(\cdot)$ is a convex function of the bounded $u_i(t)$, and thus it will be maximized either at one of the upper- or lower-bounds of $u_i(t)$, or it will have the same value for all $u_i\in[0, u_i^{\max}(t)
]$. This is because due to the convexity:
\begin{align*}
\varphi_{i}(u_i,t)&= \varphi_{i}\big((1-\frac{u_i}{u_i^{\max}(t)})*0+ \frac{u_i}{u_i^{\max}(t)}*u_i^{\max}(t),t\big) \leq (1-\frac{u_i}{u_i^{\max}(t)})*\varphi_{i}(0,t)+(\frac{u_i}{u_i^{\max}(t)})* \varphi_{i}(u_i^{\max}(t),t))\\
& \leq (1-\frac{u_i}{u_i^{\max}(t)} + \frac{u_i}{u_i^{\max}(t)})*\max\{\varphi_{i}(0,t),\varphi_{i}(u_i^{\max}(t),t))\} = \max\{\varphi_{i}(0,t),\varphi_{i}(u_i^{\max}(t),t))\}.
\end{align*}

We will now present different arguments for how  
the Hamiltonian maximizing condition is refined for strictly concave and linear $c_k(\cdot)$ functions. The arguments in each case only depend on the nature of $c_k(\cdot)$, and thus we also consider vector function $\overrightarrow{c}(\cdot)$ that simultaneously has elements that are linear and strictly concave:

\begin{enumerate}
\item For the case of strictly concave $c_i(\cdot)$,  Theorem \ref{thm:crossing} only considers cases where $u_i^{\max}(t)=u_i^{\max}$, i.e., is a constant. At each time $t$, $\varphi_{i}(u_i(t),t)=\inp*{\overrightarrow{\bLam^*}(t)}{\overrightarrow{B}(:,i)} u_i(t) - \beta^*(T) c_i(u_i(t))$ is convex in $u_i(t)$. Therefore, it is maximized either at its upper or lower-bound (i.e., $u^*_i(t)\in \{0, u_i^{\max}\}$). Thus, it suffices to compare $\varphi_{i}(0,t)=0$ and $\varphi_{i}(u^{max}_i,t)=\inp*{\overrightarrow{\bLam^*}(t)}{\overrightarrow{B}(:,i)} u^{max}_i - \beta c_i(u^{max}_i)$. Thus, the Hamiltonian maximizing condition \eqref{eq:hamiltonianmax} becomes\footnote{The question mark denoting the fact that PMP does not uniquely determine the optimal $u_i^*$ at times $t$ when $\varphi_i(t, u_i)$ does not change with $u_i$.}:
\begin{align}\label{eq:cases-concave}
\hspace*{-0.2in}u^*_i(t)=
\begin{cases}
u^{max}_i,  &\text{if} ~\inp*{\overrightarrow{\bLam^*}(t)}{\overrightarrow{B}(:,i)}> \beta^*(T)\frac{c_i(u_i^{max})}{u_i^{max}},\\
0,   &\text{if}~ \inp*{\overrightarrow{\bLam^*}(t)}{\overrightarrow{B}(:,i)} < \beta^*(T)\frac{c_i(u_i^{max})}{u_i^{max}},\\
\text{?},  &\text{if}~ \inp*{\overrightarrow{\bLam^*}(t)}{\overrightarrow{B}(:,i)} = \beta^*(T)\frac{c_i(u_i^{max})}{u_i^{max}}.
\end{cases}
\end{align}

\item For the case of linear $c_i(u_i)= v_i u_i$, at each time $t$, $\varphi_{i}(u_i(t), t)=\big(\inp*{\overrightarrow{\bLam^*}(t)}{\overrightarrow{B}(:,i)} - \beta^*(T) v_i\big) u_i $. Thus, the Hamiltonian maximizing condition \eqref{eq:hamiltonianmax} becomes:
\begin{align}\label{eq:cases-linear}
\hspace*{-0.3in}u^*_i(t)=
\begin{cases}
u^{max}_i(t), \quad &\text{if} ~ \inp*{\overrightarrow{\bLam^*}(t)}{\overrightarrow{B}(:,i)} > \beta^*(T) v_i,\\
0, \quad &\text{if} ~  \inp*{\overrightarrow{\bLam^*}(t)}{\overrightarrow{B}(:,i)} < \beta^*(T) v_i,\\
\text{?},\quad &\text{if} ~ \inp*{\overrightarrow{\bLam^*}(t)}{\overrightarrow{B}(:.i)} = \beta^*(T) v_i.
\end{cases}
\end{align}

\end{enumerate}

 As can be seen, the right-hand side of the condition terms in both cases (i.e., $\beta^*(T)c_i(u_i^{\max})$ in \eqref{eq:cases-concave} and $\beta^*(T)v_i$ in \eqref{eq:cases-linear}) is a positive constant. Thus, the structure of the optimal control $u_i^*$ can be understood by examining the fluctuations of $\inp*{\overrightarrow{\bLam^*}(t)}{\overrightarrow{B}(:,i)}$ around two constant values ($\beta^*(T)\frac{c_i(u_i^{max})}{u_i^{max}}$ and $\beta^*(T) v_i$).\footnote{Notice that according to the system model, $c_i(\cdot)$'s can potentially be a mix of linear and strictly concave, and the above statement for any concave or linear $c_i(\cdot)$ is independent of the nature of all other $c_j(\cdot)$'s (even if they are not concave at all). \red{For more details, see Remark \ref{rem:disciplined}}.}

We can trivially show, using the second PMP necessary optimality condition, that the optimal control is \emph{normal}\cite[page 82]{liberzon2012calculus} (i.e., $\lambda_0\neq0$):

\begin{Lemma}\label{lem:normal}
There is no case for which $\lambda_0=0$.
\end{Lemma}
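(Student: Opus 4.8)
The plan is to argue by contradiction using the second (nontriviality) necessary condition of the Pontryagin Maximum Principle, namely that $(\lambda_0, \overrightarrow{\Lambda^*}(t), \beta^*(t), z^*(t)) \neq \overrightarrow{0}_{n+3}$ for all $t \in [0,T]$. Suppose $\lambda_0 = 0$. Then the terminal conditions \eqref{eq:terminal} force $\overrightarrow{\Lambda^*}(T) = \overrightarrow{0}_n$ and $z^*(T) = 0$. The costate dynamics $\dot{\overrightarrow{\Lambda^*}}(t) = \matr{L}^T \overrightarrow{\Lambda^*}(t)$ in \eqref{eq:costates1} form a linear homogeneous ODE, so the unique solution with terminal value $\overrightarrow{\Lambda^*}(T) = \overrightarrow{0}_n$ is $\overrightarrow{\Lambda^*}(t) \equiv \overrightarrow{0}_n$ on all of $[0,T]$. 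Similarly, from \eqref{eq:costates2}, $\dot{z^*}(t) = -\inp*{d\overrightarrow{e}(q)/dq}{\overrightarrow{\Lambda^*}} = 0$ once $\overrightarrow{\Lambda^*} \equiv \overrightarrow{0}_n$, so together with $z^*(T) = 0$ we get $z^*(t) \equiv 0$ on $[0,T]$.

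The only remaining component is $\beta^*$, which by \eqref{eqref:betacondition} is constant: $\beta^*(t) = \beta^*(T)$ for all $t$. So to obtain the desired contradiction with the nontriviality condition, it suffices to show $\beta^*(T) = 0$ under the assumption $\lambda_0 = 0$. The natural route is to use the remaining terminal/transversality relation \eqref{eq:terminal2}, namely $\beta^*(T) \geq 0$ and $\beta^*(T)[\gamma(T) + r] = 0$, in conjunction with the Hamiltonian-maximizing condition. If $\beta^*(T) > 0$, then the complementary slackness condition forces $\gamma(T) = -r$, i.e., the budget constraint is tight. But with $\overrightarrow{\Lambda^*} \equiv \overrightarrow{0}_n$, the function $\varphi_i(u_i, t) = \inp*{\overrightarrow{\Lambda^*}(t)}{\overrightarrow{B_i}} u_i - \beta^*(T) c_i(u_i) = -\beta^*(T) c_i(u_i)$ is strictly decreasing in $u_i$ (since $c_i$ is increasing with $c_i(0) = 0$ and $\beta^*(T) > 0$), so the pointwise Hamiltonian maximization forces $u^*_i(t) = 0$ for all $i$ and all $t$. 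But then $\gamma(T) = 0 \neq -r$ (since $r > 0$), contradicting the tightness of the budget constraint. Hence $\beta^*(T) = 0$ as well.

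Putting it together: under $\lambda_0 = 0$ we have shown every component of $(\lambda_0, \overrightarrow{\Lambda^*}(t), \beta^*(t), z^*(t))$ is zero for all $t$, contradicting the second PMP necessary condition. Therefore $\lambda_0 \neq 0$, and since $\lambda_0 \in \{0,1\}$ we conclude $\lambda_0 = 1$.

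I expect the main subtlety to be handling the $\beta^*(T)$ component cleanly — one must be careful that the PMP as stated here already bundles the budget constraint into the dynamics via $\gamma$, so the relevant transversality condition is the complementary slackness \eqref{eq:terminal2}, and the argument that $u^*_i \equiv 0$ when $\overrightarrow{\Lambda^*} \equiv \overrightarrow{0}$ relies on Assumption \ref{ass:increasing} ($c_k$ increasing, $c_k(0) = 0$). A secondary point worth stating explicitly is that the uniqueness of solutions to the linear costate ODE (so that a zero terminal condition propagates backward to the identically-zero solution) is what kills $\overrightarrow{\Lambda^*}$ and then $z^*$; this is immediate from linearity but should be mentioned. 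No use of Assumption \ref{ass:positive} appears strictly necessary for this particular lemma, though it is the natural place where that assumption would otherwise be invoked, so I would double-check whether the authors intend it here.
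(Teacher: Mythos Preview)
Your proof is correct and follows essentially the same route as the paper: assume $\lambda_0=0$, deduce $\overrightarrow{\Lambda^*}\equiv\overrightarrow{0}$ from the linear costate ODE and uniqueness, then use the Hamiltonian-maximizing condition with $\beta^*(T)>0$ to force $u_i^*\equiv 0$, which yields $\gamma^*(T)=0$ and contradicts the complementary slackness condition $\beta^*(T)[\gamma^*(T)+r]=0$. Your observation that Assumption~\ref{ass:positive} is not needed for this lemma is also correct; the paper invokes it only immediately afterward to obtain~\eqref{eq:lambdapos}.
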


\begin{proof}
If $\lambda_0=0$, from \eqref{eq:terminal}, $\overrightarrow{\bLam^*}(T)=0$. As $\overrightarrow{\bLam^*}(t)=\overrightarrow{0}_n$ for all $t$ is a solution of the differential equation $\dot{\overrightarrow{\bLam^*}}(t)=\matr{L}^T \overrightarrow{\bLam^*}(t)$ with this condition, due to the uniqueness of solutions to differential equations  \cite[Theorem A.8]{seierstad1986optimal} it is also the unique solution. 

Now, {as} $\mathstr{z^*(T)=0}$ { \eqref{eq:terminal},} the only way for \[(\lambda_0, \overrightarrow{\bLam^*}(T), {\beta^*}(T){, z^*(T)})\neq \overrightarrow{0}_{n+{3}}\] (which is the second necessary condition of the PMP) to hold is for ${\beta^*}(t)={\beta^*}(T) \neq { 0}$, which together with \eqref{eq:terminal2} leads to $\beta^*(T)>0$. In this case, for all $i$, and for all $t$, 
\begin{align*}
\varphi_i(u_i, t)&=\inp*{\overrightarrow{\bLam^*}(t)}{\overrightarrow{B}(:,i)} u_i - \beta^*(T) c_i(u_i) = - \beta^*(T) c_i(u_i)\end{align*}
and therefore due to the Hamiltonian maximizing condition \eqref{eq:hamiltonianmax}, $u^*_i(t)=0$ for all such $i$ and all $t$. This leads to $\dot{\gamma^*}(t)=0$ for all $t$, and thus $\gamma^*(T)=0$. Therefore $\beta^*(T)[\gamma^*(T)+r]=\beta^*(T)r >0$, which is a contradiction with \eqref{eq:terminal2}. Therefore $\lambda_0=0$ leads to $(\lambda_0, \overrightarrow{\bLam^*}(T), {\beta^*}(T){, z^*(T)})= \overrightarrow{0}_{n+{3}}$, a contradiction with PMP. 
\end{proof}

Thus, henceforth we replace $\lambda_0$ with 1, and have from \eqref{eq:terminal} and Assumption \ref{ass:positive} that there exists a $j\in [m]$ such that:
\begin{align}\label{eq:lambdapos}
\lambda^*_j(T)>0.
\end{align}

\textbf{Zeros of $\inp*{\overrightarrow{\bLam^*}(t)}{\overrightarrow{B}(:,i)}=\text{constant}$:}

In this subsection, we look at the dynamics of the function $\inp*{\overrightarrow{\bLam^*}(t)}{\overrightarrow{B}(:,i)}$, especially with regards to how many times it can fluctuate around a fixed value in the time-horizon. Any time that this function crosses the fixed value, due to \eqref{eq:cases-concave} and \eqref{eq:cases-linear}, it leads to a switch in the optimal control from one bound to another, and if it is equal to the fixed value over any interval, then the PMP cannot uniquely determine the optimal control (i.e., the optimal control is \emph{singular} \cite[page 113]{liberzon2012calculus}).

Henceforth, let $s\geq 0$ denote the positive constant that $\inp*{\overrightarrow{\bLam^*}(t)}{\overrightarrow{B}(:,i)}$ is set equal to.
\emph{We claim that the expression for $\inp*{\overrightarrow{\bLam^*}(t)}{\overrightarrow{B}(:,i)}= s$, as a function of time $t$, at most has $n-1$ roots.} Therefore due to the continuity of each of the elements of $\overrightarrow{\bLam^*}$, we have that $u^*_i(t)$ is either equal to $u_i^{max}(t)$, or $0$ for all $t$ except maybe at $n-1$ points where it can switch between those two values, proving both parts \eqref{thm:crossing} and \eqref{thm:parta} of the theorem. We now prove this claim:

 We know that $\dot{\overrightarrow{\bLam^*}}=\matr{L}^T \overrightarrow{\bLam^*}$. 
So we have:
\begin{align*}
\dot{\overrightarrow{\bLam^*}}=\matr{Q} \matr{\Xi}^T \matr{Q}^T \overrightarrow{\bLam^*}=\matr{Q} \matr{\Xi} \matr{Q}^T \overrightarrow{\bLam^*},
\end{align*}
as $\matr{\Xi}$ is a diagonal matrix.

We define $\overrightarrow{y}(t):=\matr{Q}^T \overrightarrow{\bLam^*}(t)$. As the columns of $\matr{Q}$ are orthogonal, therefore
\begin{align*}
\dot{\overrightarrow{y}}&= \matr{Q}^T \dot{\overrightarrow{\bLam^*}}= (\matr{Q}^T \matr{Q})\matr{\Xi} \matr{Q}^T \overrightarrow{\bLam^*}
= \matr{\Xi} \matr{Q}^T \overrightarrow{\bLam^*} = \matr{\Xi} \overrightarrow{y},
\end{align*}
with $\overrightarrow{y}(T)= \matr{Q}^T \overrightarrow{\bLam^*}(T)$. Therefore, due to the uniqueness of solutions to ODEs \cite[Theorem A.8]{seierstad1986optimal}, for all \green{$i\in [n]$}:
\begin{align*}
y_i(t)= y_i(T) e^{\xi_{i}(t-T)}= \inp*{\overrightarrow{Q}(:,i)}{\overrightarrow{\bLam^*}(T)}e^{-\xi_{i}(T-t)}.
\end{align*}
Again using the fact that $Q$ is unitary, we have:
\begin{align*}
\overrightarrow{\bLam^*}(t)&= (\matr{Q} \matr{Q}^T) \overrightarrow{\bLam^*}(t) = \matr{Q}\overrightarrow{y}= \matr{Q}\bigg[\inp*{\overrightarrow{Q}(:,i)}{\overrightarrow{\bLam^*}(T)} e^{-\xi_{i}(T-t)}\bigg]_{i} = \sum_{i=1}^n \bigg[\inp*{\overrightarrow{Q}(:,i)}{\overrightarrow{\bLam^*}(T)}e^{-\xi_{i}(T-t)}\bigg] \overrightarrow{Q}(:,i).
\end{align*}

So, we will have:
\begin{align}\label{eq:equation}
\inp*{\overrightarrow{\bLam^*}(t)}{\overrightarrow{B}(:,i)}&= \sum_{j=1}^n \bigg[\inp*{\overrightarrow{Q}(:,j)}{\overrightarrow{\bLam^*}(T)}e^{-\xi_{j}(T-t)}\bigg] \bigg[\inp*{\overrightarrow{Q}(:,j)}{\overrightarrow{B}(:,i)}\bigg]=
 \sum_{j=1}^n \bigg[\inp*{\overrightarrow{Q}(:,j)}{\overrightarrow{\bLam^*}(T)}\inp*{\overrightarrow{Q}(:,j)}{\overrightarrow{B}(:,i)}\bigg]e^{-\xi_{j}(T-t)}\notag\\&=s.
\end{align}

As $\{\overrightarrow{Q}(:,i)\}_i$ is an eigenvector decomposition of $\matr{L}$ and therefore $\xi_1=0$ and $\overrightarrow{Q}(:,1)=\dfrac{1}{\sqrt{n}}\overrightarrow{1}_n$, \eqref{eq:equation} becomes:
\begin{align}\label{eq:equation2}
\sum_{j=2}^n \bigg[&\inp*{\overrightarrow{Q}(:,j)}{\overrightarrow{\bLam^*}(T)}\inp*{\overrightarrow{Q}(:,j)}{\overrightarrow{B}(:,i)}\bigg]e^{-\xi_{j}(T-t)}+ \big(\frac{1}{n} \sum_{k=1}^n \lambda^*_k(T)\sum_{h=1}^n b_{hi}-s\big)=0.
\end{align}

We now determine the number of roots of \eqref{eq:equation2}. We present separate arguments depending on whether any of the coefficients of the exponentials in \eqref{eq:equation2} are non-zero or not.

\textbf{I)} In the case where for some $j>1$, \[\big[\inp*{\overrightarrow{Q}(:,j)}{\overrightarrow{\bLam^*}(T)}\inp*{\overrightarrow{Q}(:,j)}{\overrightarrow{B}(:,i)}\big]\neq 0,\] the following lemma provides the last step in the proof of our claim.

\begin{Lemma}\label{lem:n-1}
For $K>1$, let $\eta_1>\eta_2>\cdots>\eta_K>0$. Also, for $i= 1,\cdots, K$, let $d_{i}$ be a real, non-zero number. Then, the function $f : \mathbb{R} \to \mathbb{R}$, $f(t) = \sum_{i=1}^{K} d_{i} \eta_i^{t}$ has at most $K-1$ zeros.
\end{Lemma}

Setting $\eta_i=e^{\xi_i}$ for all $i$, $d_1=\frac{1}{n} \sum_{k=1}^n \lambda^*_k(T)\sum_{h=1}^n b_{hi}-s$ and $d_j=[\inp*{\overrightarrow{Q}(:,j)}{\overrightarrow{\bLam^*}(T)}\inp*{\overrightarrow{Q}(:,j)}{\overrightarrow{B}(:,i)}] e^{-\xi_{j}T}$ for $j>1$ in this lemma proves that \eqref{eq:equation2} has at most $n-1$ roots. The proof of this lemma is provided in \S Appendix  \ref{sec:lemma2}. Therefore, the number of roots of \eqref{eq:equation2} in this case is bounded above by the number of non-zero elements in $\big\{\inp*{\overrightarrow{Q}(:,j)}{\overrightarrow{\bLam^*}(T)}\inp*{\overrightarrow{Q}(:,j)}{\overrightarrow{B}(:,i)}\big\}_{j=2}^n$, which is itself bounded above by the number of non-zeros in $\big\{\inp*{\overrightarrow{Q}(:,j)}{\overrightarrow{B}(:,i)}\big\}_{j=2}^n$. This is significant because while $\overrightarrow{\bLam^*}(T)$ is not known without the state dynamics (it is a function of the unknown $\overrightarrow{x^*}(T)$, and thus implicitly depends on the trajectory), this bound applies \textbf{to all possible state trajectories (i.e., for all $\overrightarrow{x}(0)$)}. Thus the number of switches in this case is trivially bounded by one less than the number of non-zero elements in $\{\inp*{\overrightarrow{Q}(:,j)}{\overrightarrow{B}(:,i)}\}_{j=1}^n$. 



\textbf{II)} This leaves the case where for  all $2 \leq j\leq n$, $\big[\inp*{\overrightarrow{Q}(:,j)}{\overrightarrow{\bLam^*}(T)}\inp*{\overrightarrow{Q}(:,j)}{\overrightarrow{B}(:,i)}\big]=0$. This means that $\inp*{\overrightarrow{\bLam^*}(t)}{\overrightarrow{B}(:,i)} =\frac{1}{n} (\sum_{k=1}^n \lambda^*_k(T))(\sum_{h=1}^n b_{hi})$, a constant, for all $t$. Now, in order to find the number of roots of \eqref{eq:equation2}, we consider the following two cases:

\textbf{II-1)} If $\frac{1}{n}(\sum_{j=1}^n \lambda_j(T))(\sum_{h=1}^n b_{hi})\neq s$, then \eqref{eq:equation2} has no root and the claim holds.

\textbf{II-2)} Else we have $\frac{1}{n}(\sum_{j=1}^n \lambda_j(T))(\sum_{h=1}^n b_{hi})= s$, meaning that \eqref{eq:equation2} holds for all $t$. We show how this will not be the case for any disciplined channel.

Since $\{\overrightarrow{Q}(:,j)\}_{j=1}^n$ forms an orthonormal basis for $\mathbb{R}^n$ (due to the orthogonality of $\matr{Q}$), this means that we can look at the inner product in this basis: 
\begin{align}
\sum_{j=1}^n \lambda^*_j(T) b_{ji}&= \inp*{\overrightarrow{\bLam^*}(T)}{\overrightarrow{B}(:,i)} = \sum_{j=1}^n \big[\inp*{\overrightarrow{Q}(:,j)}{\overrightarrow{\bLam^*}(T)}\inp*{\overrightarrow{Q}(:,j)}{\overrightarrow{B}(:,i)}\big]
=\big[\inp*{\overrightarrow{Q}(:,1)}{\overrightarrow{\bLam^*}(T)}\inp*{\overrightarrow{Q}(:,1)}{\overrightarrow{B}(:,i)}\big] \notag\\&= \frac{1}{n}(\sum_{j=1}^n \lambda^*_j(T))(\sum_{k=1}^n b_{ki}) ~~~
\Rightarrow ~~~ \sum_{j=1}^n \lambda^*_j(T) \big(b_{ji}- \frac{1}{n}\sum_{k=1}^n b_{ki}\big)=0\label{eq:equation3},
\end{align}
which defines a hyper-plane in the space of $\overrightarrow{\bLam^*}(T)$. But from \eqref{eq:equation2}, we have that:
\begin{align}\label{eq:equation4}
\big(\sum_{j=1}^n \lambda^*_j(T)\big)\big(\frac{1}{n} \sum_{k=1}^n b_{ki}\big) = s,
\end{align}
another hyper-plane. Equations \eqref{eq:equation3} and \eqref{eq:equation4} define the $\overrightarrow{\bLam^*}(T)$-space over which $\inp*{\overrightarrow{\bLam^*}(t)}{\overrightarrow{B}(:,i)^T}=s$ for all $t\in[0,T]$, and thus the Hamiltonian maximizing necessary condition \eqref{eq:hamiltonianmax} does not restrict $u_i^*$ over this interval. Such trajectories are known as \emph{singular arcs} \cite[page 113]{liberzon2012calculus}. 

We present different arguments depending on $c_i(\cdot)$:

\textbf{II-2-a)} If $c_i(\cdot)$ is strictly concave, then we utilize the generalized Legendre-Clebsch necessary condition of optimality on singular arcs \cite{longuski2014optimal}: we must have $\dfrac{d^2\ham}{d\overrightarrow{u}^2}\preceq 0$. However, $\dfrac{d^2\ham}{du_i^2}= -\beta^*(T) c''(u_i^*)\geq 0$, due to the strict concavity of $c_i(\cdot)$. Thus, we must have $\beta^*(T)=0$. So \eqref{eq:equation4} becomes:
$\big(\sum_{j=1}^n \lambda^*_j(T)\big)\big(\frac{1}{n} \sum_{k=1}^n b_{ki}\big) = 0$.
This means that either $\sum_{j=1}^n \lambda^*_j(T)=0$ (a contradiction with \eqref{eq:lambdapos}) or $\sum_{k=1}^n b_{ik}=0$ (ruled out for disciplined channels, the ones considered by this theorem, by Definition \ref{def:disciplined}). Thus we have a contradiction, meaning that this case will never arise.

\textbf{II-2-b)} If $c_i(\cdot)$ is linear, we have: $\varphi_{i}(u_i(t), t)=(\inp*{\overrightarrow{\bLam^*}(t)}{\overrightarrow{B}(:,i)}  - \beta^*(T) v_i) u_i $. For the singular arc case, we must have $\Phi(t):=\inp*{\overrightarrow{\bLam^*}(t)}{\overrightarrow{B}(:,i)}  - \beta^*(T) v_i =0$ over all $t$. Thus, as this function is maximally flat, all its derivatives with respect to time must also be zero (infinite-order singularity). We must have:
\begin{align*}
\dfrac{d\Phi(t)}{dt}&= \inp*{\dot{\overrightarrow{\bLam^*}}(t)}{\overrightarrow{B}(:,i)}= \inp*{\matr{L}^T\overrightarrow{\bLam^*}(t)}{\overrightarrow{B}(:,i)} = \overrightarrow{\bLam^*}^T(t)\matr{L}\overrightarrow{B}(:,i)=0.
\end{align*}
Using an induction, we can see that for $l\geq 1$:
\begin{align*}
\dfrac{d^l\Phi(t)}{dt^l}=  \inp*{(\matr{L}^T)^l\overrightarrow{\bLam^*}(t)}{\overrightarrow{B}(:,i)}=\overrightarrow{\bLam^*}^T(t)\matr{L}^l\overrightarrow{B}(:,i)=0.
\end{align*}
So we must have:
\begin{align*}
\inp*{\overrightarrow{\bLam^*}(t)}{\bigg[\matr{L}\overrightarrow{B}(:,i) | \ldots |\matr{L}^n\overrightarrow{B}(:,i) \bigg]}= 0.
\end{align*}
But {we have $\lambda^*_j>0$ for some $j$} (due to \eqref{eq:lambdapos}), so $\overrightarrow{\bLam^*(T)}\neq \overrightarrow{0}_n$. This, however, means that we must have $\text{rank}(\bigg[\matr{L}\overrightarrow{B}(:,i) | \ldots |\matr{L}^{n}\overrightarrow{B}(:,i) \bigg])<n$. However, since the system $(\matr{L}, \matr{L}\overrightarrow{B}(:,i))$ is controllable, this matrix must have row rank $n$ (due to the necessary and sufficient condition of controllability of linear systems \cite[Theorem 6.1]{chen1995linear}). This is a contradiction, meaning that the singular case will not arise in this case either.

This concludes the proof of parts \eqref{thm:crossing} and \eqref{thm:parta} of the theorem. We now proceed to part (\ref{thm:partb}) of the theorem.

When $J(\overrightarrow{x}(T))=\inp*{\overrightarrow{p}}{\overrightarrow{x}(T)}$, from \eqref{eq:final_linear}, we have $\overrightarrow{\bLam^*(T)} = \overrightarrow{p}$. Thus, we can refine the result stated in part (\ref{thm:parta}) using this additional information into the result in (\ref{thm:partb}). We will, however, require differing, stronger arguments for argument I after \eqref{eq:equation2} to obtain the tighter bound on the number of switches (as all the cases in II are either trivial or are shown not to arise). Thus, if we prove that:
\begin{align}\label{eq:equation7}
\sum_{j=2}^n \bigg[\inp*{\overrightarrow{Q}(:,j)}{\overrightarrow{p}}&\inp*{\overrightarrow{Q}(:,j)}{\overrightarrow{B}(:,i)}\bigg]e^{-\xi_{j}(T-t)}+ \big(\frac{1}{n} \sum_{k=1}^n p_k \sum_{h=1}^n b_{hi}-s\big)=0,
\end{align} 
has at most $\{\sum_{k=1}^j s_k \}_{j=1}^n$ zeros, where \[s_j : = \inp*{\overrightarrow{Q}(:,j)}{\overrightarrow{p}} \inp*{\overrightarrow{Q}(:,j)}{\overrightarrow{B}(:,i)},\] then we are done. This can be seen to be true due to the following generalization of Descartes' rule of signs (due to \cite[Theorem 4.7]{jameson2006counting}), as applied to \eqref{eq:equation7}, with $d_i$ defined as in the application of Lemma \ref{lem:n-1}.
\begin{Lemma}\label{lemma:descartes2}
The number of positive zeros of the {exponential} polynomial function $f : \mathbb{R} \to \mathbb{R}$, $f(t) = \sum_{i=1}^{n} d_{i} (e^{-t})^{\xi_i}$ is upper-bounded by the number of variations in sign in the sequence $\{s_{i}\}_i$, where $s_i=\sum_{j=1}^i d_j$. 
\end{Lemma}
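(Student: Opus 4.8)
The plan is to identify this with Descartes' sign rule for the Chebyshev system of real exponentials, applied after a summation by parts. First I would substitute $x=e^{-t}$: counting positive zeros of $f$ is the same as counting zeros of the generalized polynomial $F(x)=\sum_{i=1}^{N}d_i x^{\xi_i}$ in the open interval $(0,1)$. Merging any coinciding exponents only deletes entries from the sequence $(s_i)$, which cannot increase its number of sign variations, so we may assume $\xi_1<\xi_2<\cdots<\xi_N$. The crux is that the bound is over $(0,1)$ rather than over all of $(0,\infty)$ (the latter would only give the weaker bound ``number of sign variations in $(d_i)$''), and the device that exploits this is to divide $F$ by the factor $1-x$, which is positive on $(0,1)$ and hence does not change the zeros there, while turning the coefficients into the partial sums $s_i$.

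For the main argument I would carry this out intrinsically. By summation by parts, with $s_0=0$ and $d_i=s_i-s_{i-1}$,
\[
f(t)=\sum_{i=1}^{N}d_i e^{-\xi_i t}=\sum_{i=1}^{N}s_i\,g_i(t),\qquad g_i(t):=e^{-\xi_i t}-e^{-\xi_{i+1}t},
\]
where we set $\xi_{N+1}:=+\infty$, so $g_N(t)=e^{-\xi_N t}$. Each $g_i$ is strictly positive on $(0,\infty)$ and has the representation $g_i(t)=\int_{\xi_i}^{\xi_{i+1}}t\,e^{-st}\,ds$, i.e. it is the average of the kernel $K(t,s)=t e^{-st}$ over $[\xi_i,\xi_{i+1}]$, and these averaging intervals are pairwise disjoint and increasing in $i$. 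The key step is to show that $(g_1,\dots,g_N)$ is a \emph{Descartes system} on $(0,\infty)$: every minor $\det\big[g_{i_a}(t_b)\big]_{a,b=1}^{r}$ with $i_1<\cdots<i_r$ and $0<t_1<\cdots<t_r$ is strictly positive. This follows from the Basic Composition (Cauchy--Binet) Formula for integral operators together with (i) strict total positivity of the exponential kernel $K(t,s)=te^{-st}$ on $(0,\infty)^2$ --- equivalently, that $(e^{-\xi_1 t},\dots,e^{-\xi_N t})$ is a Chebyshev system, which is itself the elementary Rolle-induction fact that a real exponential sum with $N$ terms has at most $N-1$ positive zeros --- and (ii) the observation that the indicators of the disjoint, ordered intervals $[\xi_i,\xi_{i+1}]$ give a nonnegative ``staircase'' determinant that is positive on a set of positive measure. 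Once the Descartes-system property is in hand, the classical variation-diminishing property of such systems (proved by induction on $N$ via Rolle's theorem) states that any combination $\sum_i c_i g_i$ has at most $V(c_1,\dots,c_N)$ zeros on $(0,\infty)$ counted with multiplicity; taking $c_i=s_i$ gives exactly the claimed bound. This is, in substance, the generalization of Descartes' rule obtained by Jameson, which may alternatively be invoked directly.

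I expect the main obstacle to be the Descartes-system / total-positivity statement, and in particular placing the boundary term $g_N=e^{-\xi_N t}$ on the same footing as the others (handled above by the convention $\xi_{N+1}=\infty$ and the identity $\int_{\xi_N}^{\infty}t e^{-st}\,ds=e^{-\xi_N t}$). A fully elementary alternative is available but trades this for a different technicality: approximate the $\xi_i$ by rationals with common denominator $q$ --- which changes neither the $d_i$ nor the $s_i$ --- substitute $z=x^{1/q}$ to make $F$ a genuine polynomial $P(z)$ with the same zeros in $(0,1)$, and note that $P(z)/(1-z)=\sum_{m\ge 0}\tau_m z^m$ is a power series convergent on $|z|<1$ whose coefficient sequence runs through some leading zeros and then the blocks $s_1,\dots,s_1,s_2,\dots,s_2,\dots,s_N,s_N,\dots$, so its number of sign variations equals $V(s_1,\dots,s_N)$; Laguerre's extension of Descartes' rule to power series then bounds the positive zeros of $P/(1-z)$, hence of $P$, in $(0,1)$. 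The cost here is a limiting argument (continuity of the zero count under perturbation of the exponents, with care for multiplicities) to return from rational to arbitrary real exponents, which is why I would prefer the Descartes-system formulation or the citation to Jameson.
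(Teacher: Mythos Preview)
Your proposal is correct and shares the same skeleton as the paper's proof: both perform the Abel summation $f(t)=\sum_i s_i g_i(t)$ with $g_i(t)=e^{-\xi_i t}-e^{-\xi_{i+1}t}$ (and $g_N=e^{-\xi_N t}$), and both exploit the integral representation $g_i(t)=\int_{\xi_i}^{\xi_{i+1}} te^{-st}\,ds$ to write $f(t)=t\int \iota(s)e^{-st}\,ds$ for a step function $\iota$ taking value $s_i$ on $(\xi_i,\xi_{i+1})$. The difference lies in how the final bound is extracted. The paper, reproducing Jameson's argument, proves a small self-contained lemma: if $g(t)=\int\iota(x)e^{tx}\,dx$ with $\iota$ piecewise constant having $m$ sign changes, then $Z_+(g)\le m$; the induction on $m$ proceeds by multiplying by $e^{-x_k t}$ at a sign-change point $x_k$, differentiating under the integral (which kills that sign change since $(x-x_k)\iota(x)$ no longer flips at $x_k$), and invoking Rolle. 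You instead package the same underlying facts into the language of Descartes systems, establishing the strict positivity of all minors of $(g_1,\dots,g_N)$ via the Cauchy--Binet composition formula and the total positivity of $te^{-st}$, and then appealing to the variation-diminishing property. Your route is valid and conceptually clean---it situates the result in the total-positivity framework---but it trades self-containedness for citations to nontrivial background (strict total positivity of the exponential kernel, Cauchy--Binet for integral operators, the Descartes-system sign rule), whereas the paper's version needs nothing beyond Rolle and Leibniz. Your rational-exponent alternative is also sound in principle, but as you correctly flag, the continuity-of-roots passage back to arbitrary real exponents is delicate with multiplicities and is best avoided here.
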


\section{Proof of Lemma \ref{lem:n-1}}\label{sec:lemma2}

\begin{proof}
By induction on $K$.

\textbf{I)} $K=1$:  $f(t) =  d_{1} \eta_1^{t}$, which does not have a root as $d_1\neq 0$. Therefore the base case holds.

\textbf{II)} $K=k \to K=k+1$: 
\begin{align*}
f(t) &= \sum_{i=1}^{k+1} d_{i} \eta_i^{t} = \eta_{k+1}^t (d_{k+1} + \sum_{i=1}^{k} d_{i} (\dfrac{\eta_i}{\eta_{k+1}})^{t})= \eta_{k+1}^t g(t),
\end{align*}
 with every zero of $f(t)$ being a zero of $g(t)$. Now:
\begin{align*}
 &g'(t) =  \sum_{i=1}^{k} \big[ d_{i} \ln(\dfrac{\eta_i}{\eta_{k+1}}) \big] \big(\dfrac{\eta_i}{\eta_{k+1}}\big)^{t}.
\end{align*}        
 Notice that all these coefficients are non-zero due to the statement of the theorem. Thus, due to the induction hypothesis, $g'(t)$ has at most $k-1$ zeros. 
 
We complete the proof by appealing to Rolle's theorem \cite[p. 184, Theorem 4.4]{apostol2007calculus}:
\begin{Theorem}[Rolle's theorem]\label{thm:rolle}
If $h(\cdot)$ is a continuous-everywhere function on $[a,b]$ and has a derivative at each point in $(a, b)$ and $h(a)=h(b)$, then there exists $c \in (a,b)$ such that $h'(c)=0$.
\end{Theorem}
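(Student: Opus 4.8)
The plan is to combine two classical facts: the \emph{Extreme Value Theorem} (a continuous function on a compact interval attains its supremum and infimum) and \emph{Fermat's interior extremum condition} (a differentiable function has vanishing derivative at any interior local extremum). Since $h$ is continuous on the compact interval $[a,b]$, the Extreme Value Theorem guarantees that $h$ attains a global maximum value $M$ and a global minimum value $m$ on $[a,b]$; say $h(\xi)=M$ and $h(\eta)=m$ for some $\xi,\eta\in[a,b]$.

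First I would dispose of the trivial case. If $M=m$, then $h$ is constant on $[a,b]$, so $h'(c)=0$ for every $c\in(a,b)$ and any such $c$ serves. Hence assume $M>m$. Because $h(a)=h(b)$, the common endpoint value cannot simultaneously equal both $M$ and $m$, so at least one of the extreme values differs from this endpoint value. This forces at least one of the extremizing points $\xi,\eta$ to lie in the open interval $(a,b)$; call such an interior extremizer $c$.

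It remains to show $h'(c)=0$ at this interior extremum, which is the heart of the argument. Suppose $c$ is an interior maximum (the minimum case is symmetric). For $t\in(a,b)$ with $t>c$ we have $h(t)\le h(c)$ and $t-c>0$, so the difference quotient $\tfrac{h(t)-h(c)}{t-c}\le 0$; letting $t\downarrow c$ and using that $h'(c)$ exists by hypothesis gives $h'(c)\le 0$. For $t<c$ we have $h(t)\le h(c)$ and $t-c<0$, so the difference quotient is $\ge 0$, and letting $t\uparrow c$ gives $h'(c)\ge 0$. Since differentiability at $c$ forces the left- and right-hand limits of the difference quotient to agree with the single value $h'(c)$, we conclude $h'(c)=0$, as required.

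The main obstacle, such as it is, lies in this final interior-extremum step: it relies on the one-sided difference quotients carrying opposite signs together with the hypothesis that $h'(c)$ exists, so that both one-sided limits collapse to the common value $h'(c)$. One must also be careful to verify that at least one extremum is genuinely interior, and this is precisely where the hypothesis $h(a)=h(b)$ enters, since (when $M>m$) it excludes the possibility that both the maximum and the minimum are attained only at the endpoints.
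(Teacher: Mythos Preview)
Your proof is correct and is the standard argument for Rolle's theorem. The paper itself does not prove this statement at all: it merely quotes Rolle's theorem with a citation to Apostol's \emph{Calculus} and then uses it as a tool inside the proof of Lemma~\ref{lem:N-1}. So your proposal goes beyond what the paper provides, supplying a complete self-contained proof where the paper is content to invoke the result as classical.
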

If $g(t)$ has strictly more than $k$ zeros, then by the theorem, $g'(t)$ will have strictly more than $k-1$ zeros, a contradiction. Thus,  $g(t)$, and thus $f(t)$, can have at most $k$ zeros, completing the proof of the lemma.
\end{proof}

\section{Proof of Lemma \ref{lemma:descartes2} condensed from \cite{jameson2006counting}}\label{sec:lemma3}
\begin{proof}
We first state and prove two lemmas:
\begin{Lemma}\label{lem:multiplicity}
Suppose $g(\cdot)$ has a zero of order $k$ at $t=t_0$. Let $h(\cdot)$ be another function such that $h(t_0)\neq 0$. Then $g(\cdot)h(\cdot)$ has a zero of order $k$ at $t=t_0$.
\end{Lemma}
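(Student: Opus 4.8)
The plan is to reduce the claim to the factorization characterization of a zero of finite order. Recall that ``$g$ has a zero of order $k$ at $t_0$'' means that we may write $g(t) = (t-t_0)^k\,\tilde g(t)$ with $\tilde g$ defined on a neighborhood of $t_0$ and $\tilde g(t_0)\neq 0$; equivalently $g(t_0)=g'(t_0)=\cdots=g^{(k-1)}(t_0)=0$ while $g^{(k)}(t_0)\neq 0$. In the setting where this lemma is invoked, $g$ is an exponential polynomial (hence real-analytic), so the two formulations coincide and no regularity is lost. First I would invoke the factorization $g(t) = (t-t_0)^k\,\tilde g(t)$.

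Given that $h$ is continuous at $t_0$ (again automatic for the exponential-polynomial functions in question) and $h(t_0)\neq 0$, the product $\tilde g\cdot h$ satisfies $(\tilde g\, h)(t_0)=\tilde g(t_0)h(t_0)\neq 0$. Hence $g(t)h(t) = (t-t_0)^k\big(\tilde g(t)h(t)\big)$ exhibits $gh$ as $(t-t_0)^k$ times a function that does not vanish at $t_0$, which is precisely the assertion that $gh$ has a zero of order exactly $k$ at $t_0$.

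For a self-contained alternative that avoids factorization I would instead use the Leibniz rule: for each $j$, $(gh)^{(j)}(t_0)=\sum_{i=0}^{j}\binom{j}{i}g^{(i)}(t_0)h^{(j-i)}(t_0)$. When $j<k$ every summand carries a factor $g^{(i)}(t_0)$ with $i\le j<k$, hence vanishes, so $(gh)^{(j)}(t_0)=0$; when $j=k$ all summands with $i<k$ vanish for the same reason, leaving $(gh)^{(k)}(t_0)=g^{(k)}(t_0)h(t_0)\neq 0$. Thus the order of vanishing of $gh$ at $t_0$ is exactly $k$. There is essentially no obstacle here; the only point requiring care is to fix at the outset which notion of ``zero of order $k$'' is in force and to note that, for the functions to which the lemma is applied, both the factorization and the derivative characterizations are available.
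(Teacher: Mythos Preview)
Your proposal is correct, and your Leibniz-rule alternative is exactly the argument the paper gives: expand $(gh)^{(j)}(t_0)$ via the general Leibniz formula, observe that every summand for $j<k$ contains a vanishing derivative of $g$, and that at $j=k$ the only surviving term is $g^{(k)}(t_0)h(t_0)\neq 0$. Your factorization argument is a clean equivalent route that the paper does not use, but it buys nothing extra here since the functions in question are smooth and both characterizations coincide.
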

\begin{proof}
For all $n$, we have:
\[\frac{d^n}{dt^n}(g(t)h(t))=\sum_{i=0}^{n-1} {n \choose i} \frac{d^ig(t)}{dt^i} \frac{d^{n-i}h(t)}{dt^{n-i}} + \frac{d^ng(t)}{dt^n}h(t).\]
For $n=1,\ldots, k$ both terms on the right-hand side are zero at $t=t_0$ due to the order of the $g(\cdot)$ zero. However, for $n=k+1$, while all the terms in the sum again be zero, as $h(t_0)\neq 0$ and $\dfrac{d^{k+1}g(t_0)}{dt^{k+1}}\neq 0$, then 
\[\dfrac{d^{k+1}}{dt^n{k+1}}(g(t)h(t))\neq 0,\]
completing the proof of the lemma.
\end{proof}
For every real function $g(\cdot)$, we define $Z_+(g)$ to be the number of zeros of $g(\cdot)$ over $t\in (0,\infty)$ (counted with their potential multiplicities).
\begin{Lemma}\label{lem:zeros}
Suppose $\iota(x)$ is bounded, continuous, and non-zero on each interval $(x_{i-1}, x_i)$ such that $-\infty < x_1 <\ldots < b=x_n$.
We will count $x_i$ as a sign change if the sign of $\iota(x)$ is different in $(x_{i-1}, x_i)$ and $(x_{i}, x_{i+1})$. Then, if $g(t)= \int_{-\infty}^b \iota(x) e^{tx}dx$ and $m$ is the number of sign changes of $\iota(x)$ in the interval $(-\infty, b)$, $Z_+(g)\leq m$.
\end{Lemma}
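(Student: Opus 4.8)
The plan is to prove $Z_+(g)\le m$ by induction on the number $m$ of sign changes of $\iota$, using the classical ``multiply by a linear factor, differentiate, then apply Rolle'' device from the theory of totally positive kernels. Two routine preliminaries should be dispatched first. Since $\iota$ is bounded and $e^{tx}\to 0$ as $x\to-\infty$ for every fixed $t>0$, the integral defining $g$ converges absolutely and locally uniformly on $(0,\infty)$; moreover $g$ extends to a function analytic on $(0,\infty)$ (it is a one-sided Laplace-type transform), so its zeros there are isolated and of finite multiplicity and $Z_+(g)$ is well defined, and the same domination justifies differentiating under the integral sign on $(0,\infty)$. One genuine point of care: the class ``$\iota$ bounded'' is \emph{not} closed under the operation $\iota\mapsto(x-c)\,\iota$ that the inductive step produces, so I would run the induction in the slightly larger class of $\iota$ that are continuous, nonzero on each open subinterval, and of at most polynomial growth as $x\to-\infty$ (for which the convergence and analyticity above still hold verbatim); the stated ``bounded'' hypothesis is the degree-$0$ instance, and is exactly what Lemma~\ref{lemma:descartes2} requires.

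\emph{Base case $m=0$.} Here $\iota$ has a single fixed sign on $(-\infty,b)$ and is nonzero off the finitely many partition points, so for each $t>0$ the integrand $\iota(x)e^{tx}$ has that same fixed sign for a.e.\ $x$; hence $g(t)\neq 0$ for all $t>0$, i.e.\ $Z_+(g)=0=m$.

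\emph{Inductive step.} Assume the claim for $m-1$ and let $\iota$ have $m\ge 1$ sign changes, the leftmost at a finite point $c$ (one of the $x_i$). Replacing $\iota$ by $-\iota$ if necessary --- which alters neither $Z_+(g)$ nor the number of sign changes --- assume $\iota>0$ on $(-\infty,c)$. Put $h(t):=e^{-ct}g(t)=\int_{-\infty}^{b}\iota(x)e^{t(x-c)}\,dx$; since $e^{ct}$ never vanishes, Lemma~\ref{lem:multiplicity} gives $Z_+(g)=Z_+(h)$ with multiplicities. Differentiating under the integral, $h'(t)=e^{-ct}\int_{-\infty}^{b}\tilde\iota(x)e^{tx}\,dx$ with $\tilde\iota(x):=(x-c)\,\iota(x)$, so by Lemma~\ref{lem:multiplicity} once more $Z_+(h')=Z_+(G)$ where $G(t):=\int_{-\infty}^{b}\tilde\iota(x)e^{tx}\,dx$. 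Now count the sign changes of $\tilde\iota$ on $(-\infty,b)$: on $(-\infty,c)$ one has $x-c<0$ and $\iota>0$, so $\tilde\iota<0$; just to the right of $c$, $\iota<0$ (as $c$ is a sign change of $\iota$) while $x-c>0$, so again $\tilde\iota<0$ --- the sign change of $\iota$ at $c$ is absorbed --- and on all of $(c,b)$ the positive factor $x-c$ preserves signs, so $\tilde\iota$ inherits exactly the remaining $m-1$ sign changes of $\iota$. The induction hypothesis applied to $G$ therefore gives $Z_+(G)\le m-1$, i.e.\ $Z_+(h')\le m-1$. Finally, Rolle's theorem (Theorem~\ref{thm:rolle}) produces a zero of $h'$ strictly between any two zeros of $h$, while a zero of $h$ of multiplicity $k$ is a zero of $h'$ of multiplicity $k-1$; summing these contributions yields $Z_+(h)\le Z_+(h')+1\le m$, so $Z_+(g)=Z_+(h)\le m$, completing the induction.

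The main obstacle is the bookkeeping just flagged --- fixing the right ambient class of weight functions so the inductive step stays inside it --- together with tracking ``zeros'' consistently with multiplicity at every step (in the differentiation-under-the-integral identity, in the two uses of Lemma~\ref{lem:multiplicity}, and in the Rolle count); everything else is mechanical. With Lemma~\ref{lem:zeros} in hand, Lemma~\ref{lemma:descartes2} follows by realizing $f(t)=\sum_{i=1}^{N}d_i\,(e^{-t})^{\xi_i}$ as a constant multiple of $\int_{-\infty}^{b}\iota(x)e^{tx}\,dx$ for a step function $\iota$ whose successive constant values are, up to a fixed overall sign, the partial sums $s_i=\sum_{j\le i}d_j$ --- so the number of sign changes of $\iota$ is precisely the number of sign variations of $\{s_i\}$ --- and invoking the bound just proved.
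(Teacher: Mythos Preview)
Your proof is correct and follows essentially the same approach as the paper's: induction on the number of sign changes, with the inductive step carried out by multiplying by $e^{-ct}$, differentiating under the integral to replace $\iota$ by $(x-c)\iota$ (which kills one sign change), and then invoking Rolle together with Lemma~\ref{lem:multiplicity}. The only cosmetic difference is that you peel off the \emph{leftmost} sign change whereas the paper peels off the rightmost; more substantively, you correctly flag and fix a point the paper glosses over, namely that $(x-c)\iota(x)$ is no longer bounded on $(-\infty,b)$, so the induction must be run in the larger class of polynomially-growing weights.
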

\begin{proof}
We prove $Z_+(g)\leq m$ by induction on $m$:

$\mathbf{m=0}$: If $\iota(x)>0$ for all $x<-\xi_1$, it will also be positive in all intervals $(x_{i}, x_{i+1})$. This means that $g(t)>0$ for all such $t$, and therefore $Z_+(g)=0$. The same reasoning applies to $\iota(x)<0$ over $x<-\xi_1$.

We now assume the result for $m=k$ and \textbf{prove the $m=k+1$ case}.
If the number of sign changes of $\iota(x)$ is $k+1$, without loss of generality we assume the last zero-crossing happens at $x_k$. So $\iota(x) > 0$ over  $x \in (x_{k-1}, x_{k})$ and $\iota(x)< 0$ over  $x \in (x_{k}, x_{k+1})$. Define $h(t):= e^{-x_k t} g(t)= \int_{-\infty}^{b} \iota(x)e^{t(x-x_k)} dx$. From Lemma \ref{lem:multiplicity}, $Z_+(h)=Z_+(g)$.

We now look at $k(t):=\dfrac{dh(t)}{dt}$ (using the Leibniz integral rule to change the order of differentiation and integration):
\begin{align}
k(t)= \int_{-\infty}^{b} (x-x_k)\iota(x)e^{t(x-x_k)} dx.
\end{align}
The function $I(x)= (x-x_k)\iota(x)$ fulfills the conditions of the lemma for the same set of $\{x_i\}$ points, with the exception that it will not change signs at $x_k$ (as both terms in $I(x)$ will change signs at that point). Thus, the total number of sign changes of $I(x)$ over $(-\infty, b)$ will be $k$, and by the induction step, $Z_+(k)\leq k$. Using the same Rolle's theorem (Theorem \ref{thm:rolle}) argument in the proof of Lemma \ref{lem:n-1}, we can see that thus $Z_+(h)\leq k+1$, completing the proof of this lemma.
\end{proof}
We now proceed with the proof of Lemma \ref{lemma:descartes2}.
 Define $S(\{s_{i}\}_i)$ to be the number of sign variations in $\{s_{i}\}_i$.
Note:
\begin{align*}
f(t) &= \sum_{i=1}^{n} d_{i} (e^{-t})^{\xi_i} =  \sum_{i=1}^{n-1}  s_i \big((e^{-t})^{\xi_{i}} - (e^{-t})^{\xi_{i+1}}\big) + s_n (e^{-t})^{\xi_{n}}  
\end{align*}
Furthermore, because $\frac{de^{tx}}{dt}= x e^{tx}$: 
\begin{align}\label{eq:ft}
f(t) &= \sum_{i=1}^{n-1}  s_i \big((e^{-t})^{\xi_{i}} - (e^{-t})^{\xi_{i+1}}\big) + s_n (e^{-t})^{\xi_{n}} = \sum_{i=1}^{n-1} s_i \int_{-\xi_{i+1}}^{-\xi^{i}} t e^{tx} dx + s_n \int_{-\infty}^{-\xi^{n}} t e^{tx} dx \notag\\&= t \big(\sum_{i=1}^{n-1}  \int_{-\xi_{i+1}}^{-\xi^{i}} s_i e^{tx} dx +  \int_{-\infty}^{-\xi^{n}} s_n e^{tx} dx\big)
\end{align}
Define $g(t):=\sum_{i=1}^{n-1} \int_{-\xi_{i+1}}^{-\xi^{i}} s_i  e^{tx} dx + \int_{-\infty}^{-\xi^{n}} s_n e^{tx} dx$. Then by Lemma \ref{lem:multiplicity}, $Z_+(f)= Z_+(g)$. Define $\iota(x) := s_i$ for $-\xi_{i+1}< x < -\xi_{i}$ and $\iota(x):=s_n$ for $x < -\xi_{n}$.\footnote{The value of $\iota(x)$ can be arbitrarily assigned to a bounded value at the overlap points.} Then, from \eqref{eq:ft}:
\begin{align}\label{eq:ft1}
g(t) &= \int_{-\infty}^{-\xi^{1}} \iota(x)e^{tx} dx.
\end{align}
But $\iota(x)$ fulfills the conditions of Lemma \ref{lem:zeros} with $S(\{s_i\})_i$ number of points where its sign changes. Thus, by that lemma, $Z_+(g)\leq S(\{s_i\})_i$, and therefore $Z_+(f)\leq S(\{s_i\})_i$, completing the proof of this lemma.

\end{proof}

\green{\section{Proof of Lemma \ref{lem:N-1}}\label{sec:lemma_gen}

\begin{proof}
By induction on M.

\textbf{I)} $M=1$:  $f(t) = \sum_{j=0}^{w-1} d_{j} t^j\eta^{t} = \eta^t g(t)$, with $f(t)=0$ only if $g(t)=0$. Due to the fundamental theorem of algebra, $g(t)$ has at most $w-1 = K-1$ real zeros (as $d_{w-1}>0$), and therefore the base case holds.

\textbf{II)} $M=k \to M=k+1$: 
\begin{align*}
f(t) &= \sum_{i=1}^{k+1}\sum_{j=0}^{w_{i}-1} d_{i,j} t^j\eta_i^{t} = \eta_{k+1}^t (\sum_{j=0}^{w_{k+1}-1} d_{k+1,j} t^j + \sum_{i=1}^{k}\sum_{j=0}^{w_{i}-1} d_{i,j} t^j (\dfrac{\eta_i}{\eta_{k+1}})^{t})= \eta_{k+1}^t g(t),
\end{align*}
 with every zero of $f(t)$ being a zero of $g(t)$. Now:
\begin{align*}
 g'(t) &= \sum_{j=1}^{w_{k+1}-1} d_{k+1,j} j t^{j-1} + \sum_{i=1}^{k}\sum_{j=0}^{w_{i}-1} d_{i,j} j t^{j-1} (\dfrac{\eta_i}{\eta_{k+1}})^{t} + \sum_{i=1}^{k}\sum_{j=0}^{w_{i}-1} d_{i,j} t^{j} (\dfrac{\eta_i}{\eta_{k+1}})^{t} \ln(\dfrac{\eta_i}{\eta_{k+1}}) \\&= \sum_{j=0}^{w_{k+1}-2}  d_{k+1,j+1}({j+1}) t^{j} + \sum_{i=1}^{k} d_{i,{w_{i}-1}} t^{{w_{i}-1}} (\dfrac{\eta_i}{\eta_{k+1}})^{t} \ln(\dfrac{\eta_i}{\eta_{k+1}})+ \sum_{i=1}^{k}\sum_{j=0}^{w_{i}-2} d_{i,{j+1}} t^{j} (\dfrac{\eta_i}{\eta_{k+1}})^{t} (j+1 + \ln(\dfrac{\eta_i}{\eta_{k+1}})).
\end{align*}        
 Differentiating $w_{k+1}-1$ more times, the first term vanishes, and the other terms only change in their coefficients. Specifically, the coefficient of the $t^{w_{k}-1} (\dfrac{\eta_k}{\eta_{k+1}})^{t}$ term will be $d_{k,{w_{k}-1}} \big(\ln(\dfrac{\eta_i}{\eta_{k+1}})\big)^{w_{k+1}}$, which is non-zero due to the statement of the theorem. Thus, due to the induction hypothesis, $\dfrac{\partial^{w_{k+1}-1}g(t)}{\partial t^{w_{k+1}-1}}$ has at most $\sum_{i=1}^{k} w_i -1$ zeros. 
 
We now complete the proof by repeatedly appealing to Rolle's theorem \cite[p. 184, Theorem 4.4]{apostol2007calculus}:
\begin{Theorem}[Rolle's theorem]\label{thm:rolle1}
If $h(\cdot)$ is a continuous-everywhere function on $[a,b]$ and has a derivative at each point in $(a, b)$ and $h(a)=h(b)$, then there exists $c \in (a,b)$ such that $h'(c)=0$.
\end{Theorem}
If $\dfrac{\partial^{w_{k+1}-2}g(t)}{\partial t^{w_{k+1}-2}}$ has strictly more than $\sum_{i=1}^{k} w_i$ zeros, then by the theorem, $\dfrac{\partial^{w_{k+1}-1}g(t)}{\partial t^{w_{k+1}-1}}$ will have strictly more than $\sum_{i=1}^{k} w_i -1$ zeros, a contradiction. Thus, $\dfrac{\partial^{w_{k+1}-2}g(t)}{\partial t^{w_{k+1}-2}}$ will have at most $\sum_{i=1}^{k} w_i$ zeros. Applying the same reasoning reasoning $w_{k+1}-1$ more times shows that $g(t)$, and thus $f(t)$, can have at most $\sum_{i=1}^{k+1} w_i -1$ zeros, completing the proof of the theorem.
\end{proof}}





\vskip 0pt plus -1fil

\begin{IEEEbiographynophoto}{Soheil Eshghi} (M'08)
received his Ph.D. degree
in Electrical and Systems Engineering 
from the University of Pennsylvania
in 2015. He is currently a postdoctoral associate at Yale Institute for Network Science (YINS) and the Department of Electrical Engineering at Yale University.
His research interests include mathematical models of influence in social systems, as well as applications of optimal control and game theory to social, biological, and electrical networks.
\end{IEEEbiographynophoto}

\begin{IEEEbiographynophoto}{Victor M. Preciado}
received the PhD degree in electrical engineering and computer science from the Massachusetts Institute of Technology. He is currently an associate professor of electrical and systems engineering with the University of Pennsylvania. He is a member of the Networked \& Social Systems Engineering program, the Warren Center for Network \& Data Sciences, and the Applied Math \& Computational Science (AMCS) program. He is a recipient of the 2017 National Science Foundation Faculty Early Career Development (CAREER) Award. His main research interests lie at the intersection of big data and network science; in particular, in using innovative mathematical and computational approaches to capture the essence of complex, high-dimensional dynamical systems. Relevant applications of this line of research can be found in the context of socio-technical networks, brain dynamical networks, healthcare operations, biological systems, and critical technological infrastructure. He is a member of the IEEE.
\end{IEEEbiographynophoto}

\begin{IEEEbiographynophoto}{Saswati Sarkar}
received ME from the Electrical
Communication Engineering Department
at the Indian Institute of Science,
Bangalore in 1996 and PhD from the Electrical
and Computer Engineering Department at the
University of Maryland, College Park, in 2000.
She joined the Electrical and Systems Engineering
Department at the University of
Pennsylvania, Philadelphia as an Assistant
Professor in 2000 where she is currently a
Professor. She received the Motorola gold
medal for the best masters student in the
division of electrical sciences at the Indian Institute of Science and a
National Science Foundation (NSF) Faculty Early Career Development
Award in 2003. She was an associate editor of IEEE Transaction on
Wireless Communications from 2001 to 2006, and is currently an associate
editor of IEEE/ACM Transactions on Networks. Her research interests
are in stochastic control, resource allocation, dynamic games and economics
of networks.
\end{IEEEbiographynophoto}

\begin{IEEEbiographynophoto}{Santosh S. Venkatesh} (M'86) received
the Ph.D. degree in electrical engineering from
the California Institute of Technology, Pasadena,
in 1986. He is a Professor with the
Department of Electrical and Systems Engineering,
University of Pennsylvania, Philadelphia.
His research interests are in probabilistic
models, random graphs, epidemiology, and
network and information theory.
\end{IEEEbiographynophoto}

\begin{IEEEbiographynophoto}{Qing Zhao}
(F'13) received the Ph.D. degree in Electrical Engineering from Cornell University in 2001. She is currently a Professor and Gordon Lankton Sesquicentennial Faculty Fellow in the School of Electrical
and Computer Engineering at Cornell University, Ithaca, NY. Prior to joining Cornell University in 2015,  she was
a Professor at University of California, Davis. Her research interests include
sequential decision theory and stochastic optimization, machine learning, statistical inference, and algorithmic theory with applications in infrastructure and communication systems and social economic networks. She is a Fellow of IEEE. She received the 2010 IEEE Signal Processing Magazine
Best Paper Award and the 2000 Young Author Best Paper Award from
the IEEE Signal Processing Society.
While on the faculty of UC Davis, she held the title of UC Davis Chancellor's Fellow.
\end{IEEEbiographynophoto}

\begin{IEEEbiographynophoto}{Raissa D'Souza} received the Ph.D. degree in statistical physics from the Massachusetts Institute of Technology (MIT), Cambridge, MA, USA, in 1999. She is a Professor of Computer Science and of Mechanical Engineering at the University of California Davis, Davis, CA, USA, as well as an External Professor at the Santa Fe Institute, Santa Fe, NM, USA. She is a Fellow of the American Physical Society, serves on the editorial board of numerous international mathematics and physics journals, has organized key scientific meetings like NetSci 2014, was a member of the World Economic Forum's Global Agenda Council on Complex Systems, and is currently the President of the Network Science Society.
\end{IEEEbiographynophoto}

\begin{IEEEbiographynophoto}{Ananthram Swami} (F'08)
is with the U.S. Army Research
Laboratory (ARL) as the Army's ST (Senior Research
Scientist) for Network Science. He is an ARL Fellow and
Fellow of the IEEE. He has held positions with Unocal
Corporation, the University of Southern California (USC),
CS-3 and Malgudi Systems. He was a Statistical
Consultant to the California Lottery, developed a MATLAB-based toolbox for
non-Gaussian signal processing, and has held visiting faculty positions at INP,
Toulouse, and Imperial College, London. He received the B.Tech. degree from
IIT-Bombay, the M.S. degree from Rice University, and the Ph.D. degree from
USC, all in Electrical Engineering. His research interests are in the broad area of network science with applications in composite tactical networks.
\end{IEEEbiographynophoto}

\end{document}